  \newcommand{\Z}{{\mathbb Z}}
  \newcommand{\N}{{\mathbb N}}
 \newtheorem{thm}{Theorem}
\newtheorem{prop}[thm]{Proposition}
\newtheorem{lem}[thm]{Lemma}
\theoremstyle{definition}
\newtheorem{remark}[thm]{Remark}
\newtheorem{example}[thm]{Example}
\newtheorem{fact}{Claim}
\begin{document}

\title{The least doubling constant of a path graph}

\author[E. Durand-Cartagena]{Estibalitz Durand-Cartagena$^*$}
\address{Departamento de Matem\'atica Aplicada, ETSI Industriales, UNED\\
28040 Madrid, Spain.} 
\email{edurand@ind.uned.es }

\author[J. Soria]{Javier Soria$^\dagger$}
\address{
Instituto de Matem\'atica Interdisciplinar (IMI); Departamento de An\'alisis Ma\-te\-m\'a\-ti\-co y Matem\'atica Aplicada, Universidad Complutense de Madrid\\
28040 Madrid, Spain.}
\email{javier.soria@ucm.es}

\author[P. Tradacete]{Pedro Tradacete$^{\ast\ast}$}
\address{Instituto de Ciencias Matem\'aticas (CSIC-UAM-UC3M-UCM)\\
Consejo Superior de Investigaciones Cient\'ificas\\
C/ Nicol\'as Cabrera, 13--15, Campus de Cantoblanco UAM\\
28049 Madrid, Spain.}
\email{pedro.tradacete@icmat.es}

\thanks{$^*$E. Durand-Cartagena was partially supported by MINECO (Spain), project PGC2018-097286-B-I00 and  the grant 2021-MAT11 (ETSI Industriales, UNED)}

\thanks{$^\dagger$J. Soria  was partially supported by grants PID2020-113048GB-I00 funded by MCIN/AEI/ 10.13039/501100011033, and Grupo UCM-970966.}

\thanks{$^{\ast\ast}$P. Tradacete was partially supported by grants PID2020-116398GB-I00, MTM2016-76808-P, MTM2016-75196-P and CEX2019-000904-S funded by MCIN/AEI/ 10.13039/501100011033.}

\begin{abstract}
We study the least doubling constant $C_G$ among all possible doubling measures defined on a path graph $G$. We consider both finite and infinite cases and show that, if $G=\mathbb Z$,  $C_{\mathbb Z}=3$, while for $G=L_n$, the path graph with $n$ vertices, one has $1+2\cos(\frac{\pi}{n+1})\leq C_{L_n}<3$, with equality on the lower bound if and only if $n\le8$. Moreover, we analyze the structure of doubling minimizers on $L_n$ and $\Z$, those measures whose doubling constant is the smallest possible.
\end{abstract}

\subjclass[2020]{05C12, 39A12, 05C50, 05C31}

\keywords{Doubling measure; linear graph}
\date{\today}

\maketitle


\section{Introduction}

This paper is motivated by the study of doubling measures on graphs, already started in \cite{ST}. In order to properly explain our approach, we will first recall some terminology from the abstract theory of measure metric spaces: Given a metric space $(X,d)$, a Borel measure $\mu$ on $X$ is said to be a \emph{doubling measure} whenever
\begin{equation}\label{defdb}
C_\mu=\sup_{x\in X, r>0}\frac{\mu\big(B(x,2r)\big)}{\mu\big(B(x,r)\big)}<\infty,
\end{equation}
where $B(x,r)=\{y \in X: d(x,y)<r\}$ denotes the open ball of center $x$ and with radius $r$. The number $C_\mu$ is called the \emph{doubling constant} of the measure $\mu$. Doubling measures play a fundamental role in the extension of classical results in analysis on Euclidean spaces to a more general setting (cf. \cite{CW, MaSe, heinonen, HKST} for background and some recent developments). 

Associated to a metric space $(X,d)$, the following invariant was introduced in \cite{ST}
$$
C_{(X,d)}=\inf\{C_\mu: \mu \text{ doubling measure on } X\},
$$
which will be referred to as the \emph{least doubling constant} of $(X,d)$. It was shown in \cite{ST} that, if $X$ supports a doubling measure and contains more than one point, then $C_{(X,d)}\geq 2$. This invariant is somehow related to metric dimension theory and our purpose in this note is to study its properties in the context of graphs. We will only deal with connected simple graphs (without loops nor multiple edges). 

A connected simple graph $G$ with vertex set $V_G$ and edges $E_G$ can be considered as a metric space with the path distance: Given vertices $x,y\in V_G$, a path joining $x$ to $y$ is a collection of edges of the form $\{\{x_{i-1},x_i\}\}_{i=1}^k\subset E_G$ with $x_0=x$ and $x_k=y$. In this case, we say that the path has length $k$. Thus, for $x,y\in V_G$, the distance $d_G(x,y)$ is defined as the smallest possible length of a path joining $x$ to $y$ (see \cite{BM} for standard notations and basic results for graphs).

A positive  measure $\mu$ on a graph $G$ will always be determined by a weight function $\mu:V_G\rightarrow (0,\infty)$, and despite the slight abuse of terminology we will write, for any set $A\subset V_G$,  $\mu(A)=\sum_{v\in A}\mu(v)$. Note that if $V_G$ is finite, every measure on $G$ is doubling for an appropriate constant. However, it is worth to observe that there are graphs which do not support any doubling measures, like the infinite $k$-homogeneous tree $T_k$ (see \cite{NaorTao} and also \cite{sortra} for further examples).

For simplicity, we will denote $C_G=C_{(V_G,d_G)}$. In \cite{ST}, this invariant was computed for certain families of finite graphs. Namely, let $K_n$ denote the complete graph with $n$ vertices, $S_n$ be the star-graph with $n$ vertices (one vertex of degree $n-1$ and $n-1$ leaves), and let $C_n$ be the cycle-graph with $n$ vertices: Then, for $n\geq 3$, we have
\begin{enumerate}
\item[(i)] $C_{C_n}=3$,
\item[(ii)] $C_{S_n}=1+\sqrt{n-1}$,
\item[(iii)] $C_{K_n}=n$.
\end{enumerate}

Our main goal in this work is to enlarge this list by estimating the least doubling constant of path or linear graphs $L_n$, $\mathbb N$ and $\mathbb Z$, showing that, somehow surprisingly, the situation strongly differs from the previous similar cases. Here, for $n\in \mathbb N$, the path $L_n$ can be described as the graph with vertex set $V_{L_n}=\{1,\ldots,n\}$ and edge set $E_{L_n}=\{\{j,j+1\}:1\leq j\leq n-1\}$. Similarly, we can view $\mathbb N$ and $\mathbb Z$ as infinite path graphs with $V_{\mathbb N}=\mathbb N$, $V_{\mathbb Z}=\mathbb Z$, $E_{\mathbb N}=\{\{j,j+1\}:j\in\mathbb N\}$ and $E_{\mathbb Z}=\{\{j,j+1\}:j\in\mathbb Z\}$. In particular, we will show that $C_{\mathbb N}=C_{\mathbb Z}=3=\lim_{n\to\infty} C_{L_n}$. In the finite case, we will actually prove, in Theorem~\ref{lessthan3}, that 
$$
1+2\cos\Big(\frac{\pi}{n+1}\Big)\leq C_{L_n}<  3,
$$ 
with equality on the left hand side inequality only for  $2\le n\le 8$. 
\medskip

The paper is organized as follows: In Section~\ref{general} we prove, in Proposition~\ref{p:DMconvex}, that the set of doubling minimizers is always a non-empty convex cone in the set of measures over a graph. In Section~\ref{sec3} we obtain, in Theorem~\ref{thm:C^0_Ln}, the exact value of the auxiliary constant $$C_{L_n}^0=\sup_{x\in V_G}\frac{\mu(B(x,1))}{\mu(x)},$$ which provides the lower bound for $C_{L_n}$. Section~\ref{constz} is devoted to the case of the infinite linear graph, $\Z$, and we determine, in Theorem~\ref{const3}, that  $C_{\mathbb Z}=C_{\mathbb Z}^0=3$, as well as the uniqueness of the minimizers given by the counting measure (see Theorem~\ref{t:minimizersZ}).  In Section~\ref{secc5} we prove  in Theorem~\ref{t:M1M2M3} that when computing the least doubling constant $C_{L_n}$ it is not necessary to consider all possible quotients of measures of balls, thus reducing the complexity of the problem to some  simpler cases. As a consequence of this, we deduce that the sequence of $C_{L_n}$ is non-decreasing in $n\in\mathbb N$ (Proposition \ref{p:C_Ln non-decreasing}). In Section~\ref{secct6} we consider the most important properties for the doubling minimizers on $L_n$ and prove, in Theorem~\ref{t:mainDM}, the fundamental formula relating  $C_\mu$ to some very concrete  quotients, one of the basic results for these measures. We finish, in Section~\ref{finalrem}, with some final remarks, comments and open questions which we consider to be of interest for future developments.

\section{Doubling minimizers on  graphs}\label{general}

Given a graph $G$, let $\operatorname{diam}(G)=\sup\{d_G(x,y):x,y\in V_G\}$. It is easy to see that
\begin{equation}\label{eq:C_G}
C_G=\inf_{\mu}\sup\Big\{\frac{\mu(B(x,2k+1))}{\mu(B(x,k))}: x\in V_G,\,k\in\mathbb Z, \,0\leq k\leq \Big\lceil \frac{\operatorname{diam}(G)-1}{2}\Big\rceil \Big\},
\end{equation}
the infimum being taken over all doubling measures $\mu$ on $G$ (if $\operatorname{diam}(G)=\infty$, then $k\in\mathbb N\cup\{0\}$). Here, and throughout the rest of paper, $B(x,r)=\{y:d(x,y)\leq r\}$ denotes the closed ball with center $x$ and radius $r$ (in the discrete setting, we can equivalently work with either open or closed balls), and $\lceil s\rceil=\min\{n\in\mathbb Z:s\leq n\}$.

If the infimum in \eqref{eq:C_G} is attained at some $\mu$ (which is the case in particular for finite graphs), such a measure will be called a \emph{doubling minimizer} for $G$. Given a graph $G$, with $C_G<\infty$, let us denote the set of doubling minimizers by 
$$
DM(G)=\{\mu:\text{ doubling measure on }G \text{ and } C_\mu=C_{G}\}.
$$
We will see next that the set $DM(G)$ is always a non-empty convex set. Before showing this result, the following elementary lemma will be convenient throughout.

\begin{lem}\label{l:holder}
Let $(\alpha_j)_{j=1}^m, (\beta_j)_{j=1}^m$ be positive real scalars. We have that
$$
\frac{\sum_{j=1}^m\alpha_j}{\sum_{j=1}^m\beta_j}\leq\max_{1\leq j\leq m}\Big\{\frac{\alpha_j}{\beta_j}\Big\}.
$$
Moreover, equality holds if and only if $\frac{\alpha_i}{\beta_i}=\frac{\alpha_j}{\beta_j}$, for every $1\leq i,j\leq m$.
\end{lem}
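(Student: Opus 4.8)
The plan is to prove this elementary lemma by a standard "mediant inequality" argument. First I would observe that, since all $\beta_j > 0$, we may set $M = \max_{1\le j\le m}\{\alpha_j/\beta_j\}$, so that $\alpha_j \le M\beta_j$ for every $j$. Summing over $j$ gives $\sum_{j=1}^m \alpha_j \le M\sum_{j=1}^m\beta_j$, and dividing by the positive quantity $\sum_{j=1}^m\beta_j$ yields the claimed inequality
$$
\frac{\sum_{j=1}^m\alpha_j}{\sum_{j=1}^m\beta_j}\le M = \max_{1\le j\le m}\Big\{\frac{\alpha_j}{\beta_j}\Big\}.
$$

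For the equality case, I would track when the inequality $\sum_{j=1}^m\alpha_j \le M\sum_{j=1}^m\beta_j$ is an equality. This happens precisely when $\alpha_j = M\beta_j$ for every $j$ (since each term satisfies $\alpha_j \le M\beta_j$, and a sum of nonnegative slacks $M\beta_j - \alpha_j$ vanishes iff each slack does). Equivalently, $\alpha_j/\beta_j = M$ for all $j$, which is exactly the condition that all the ratios $\alpha_i/\beta_i$ coincide. The converse direction is immediate: if all ratios equal a common value $c$, then $\sum\alpha_j = c\sum\beta_j$, so the quotient on the left equals $c$, which is also the maximum.

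There is no real obstacle here; the only point requiring a word of care is that positivity of the $\beta_j$ (and hence of $\sum\beta_j$) is what allows the division step and makes the max well-defined, and this is part of the hypothesis. An alternative, perhaps slightly slicker, presentation would induct on $m$ using the two-term case $\frac{\alpha_1+\alpha_2}{\beta_1+\beta_2}\le\max\{\frac{\alpha_1}{\beta_1},\frac{\alpha_2}{\beta_2}\}$, with equality iff $\frac{\alpha_1}{\beta_1}=\frac{\alpha_2}{\beta_2}$, but the direct argument above is cleaner and handles the equality characterization for all $m$ at once, so that is the route I would take.
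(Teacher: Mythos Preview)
Your argument is correct. For the inequality itself, your proof and the paper's coincide: both amount to $\sum_j \alpha_j = \sum_j \tfrac{\alpha_j}{\beta_j}\beta_j \le M\sum_j\beta_j$ with $M=\max_j \alpha_j/\beta_j$.

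Where you diverge is in the equality characterization. The paper proceeds by induction on $m$: it picks an index $j_0$ achieving the maximum, shows via an algebraic manipulation that equality forces $\frac{\sum_{j\ne j_0}\alpha_j}{\sum_{j\ne j_0}\beta_j}$ to equal the same maximum, and then invokes the induction hypothesis on the remaining $m-1$ terms. Your route is more direct: you simply observe that $\sum_j (M\beta_j - \alpha_j)=0$ with each summand nonnegative forces every summand to vanish, giving $\alpha_j/\beta_j = M$ for all $j$ in one stroke. Your argument is shorter and avoids the induction entirely; the paper's approach has the minor advantage of making explicit the two-term reduction (which is perhaps closer in spirit to how the lemma gets applied later, e.g.\ when combining two measures), but mathematically there is nothing to choose between them.
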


\begin{proof}
Since $\beta_j>0$, for every $1\leq j\leq m$, the inequality follows from Holder's inequality
$$
\sum_{j=1}^m \alpha_j=\sum_{j=1}^m \frac{\alpha_j}{\beta_j}\beta_j\leq \max_{1\leq j\leq m}\Big\{\frac{\alpha_j}{\beta_j}\Big\}\sum_{j=1}^m\beta_j.
$$

In order to characterize equality in the above expression, one can proceed by induction. Suppose $\frac{\sum_{j=1}^m\alpha_j}{\sum_{j=1}^m\beta_j}=\max_{1\leq j\leq m}\Big\{\frac{\alpha_j}{\beta_j}\Big\},$ and let $j_0$ be an index such that $\frac{\alpha_{j_0}}{\beta_{j_0}}=\max_{1\leq j\leq m}\Big\{\frac{\alpha_j}{\beta_j}\Big\}.$ A simple algebraic manipulation yields that
$$
\frac{\alpha_{j_0}+\sum_{j\neq j_0}\alpha_j}{\beta_{j_0}+\sum_{j\neq j_0}\beta_j}=\frac{\alpha_{j_0}}{\beta_{j_0}}\Leftrightarrow \frac{\sum_{j\neq j_0}\alpha_j}{\sum_{j\neq j_0}\beta_j}=\frac{\alpha_{j_0}}{\beta_{j_0}}.
$$
Hence, we have that
$$
\max_{1\leq j\leq m}\Big\{\frac{\alpha_j}{\beta_j}\Big\}= \frac{\sum_{j\neq j_0}\alpha_j}{\sum_{j\neq j_0}\beta_j}\leq\max_{j\neq j_0}\Big\{\frac{\alpha_j}{\beta_j}\Big\}\leq \max_{1\leq j\leq m}\Big\{\frac{\alpha_j}{\beta_j}\Big\}.
$$
This means in particular that 
$$
 \frac{\sum_{j\neq j_0}\alpha_j}{\sum_{j\neq j_0}\beta_j}=\max_{j\neq j_0}\Big\{\frac{\alpha_j}{\beta_j}\Big\},
$$
and we can conclude by induction.
\end{proof}

\begin{prop}\label{p:DMconvex}
If  $G$ is a graph, with $C_G<\infty$, then $DM(G)$ is a non-empty convex cone.
\end{prop}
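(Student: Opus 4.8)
The plan is to establish three things: that $DM(G)$ is non-empty, that it is closed under positive scalar multiplication (cone), and that it is convex.

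First I would dispose of the cone property, which is immediate: for any $\lambda>0$ and any measure $\mu$, the ratios $\mu(B(x,2k+1))/\mu(B(x,k))$ defining $C_\mu$ are homogeneous of degree zero in $\mu$, so $C_{\lambda\mu}=C_\mu$. Hence $\lambda\mu\in DM(G)$ whenever $\mu\in DM(G)$, and also $DM(G)\neq\emptyset$ precisely when the infimum in \eqref{eq:C_G} is attained. For the non-emptiness I would argue by a compactness/normalization argument: fix a base vertex $v_0$ and restrict attention to measures normalized by $\mu(v_0)=1$ (using the cone property this loses nothing). In the finite case, the set of such normalized measures, intersected with a suitable closed bounded region where all weights are bounded above and below away from $0$, is compact, and $\mu\mapsto C_\mu$ is continuous there (it is a maximum of finitely many continuous quotients), so the infimum is attained. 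For infinite $G$ with $C_G<\infty$ one takes a minimizing sequence, normalizes at $v_0$, and extracts a subsequence converging pointwise on $V_G$ (a diagonal argument, since $V_G$ is countable); one must check the limit weight stays strictly positive and that $C$ of the limit does not exceed the liminf — this is where a little care is needed, but it is standard. Actually, since the proposition only asserts $DM(G)$ is non-empty under the hypothesis $C_G<\infty$ together with the tacit assumption that the infimum is attained (as the excerpt notes, "if the infimum is attained\dots"), one may simply take the definition: $DM(G)\neq\emptyset$ by hypothesis in the cases of interest, and for finite graphs the compactness argument above supplies it unconditionally.

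The heart of the matter is convexity. Let $\mu_0,\mu_1\in DM(G)$, so $C_{\mu_0}=C_{\mu_1}=C_G$, and let $\mu=t\mu_0+(1-t)\mu_1$ with $t\in(0,1)$. For every admissible pair $(x,k)$ I would write
$$
\frac{\mu(B(x,2k+1))}{\mu(B(x,k))}=\frac{t\,\mu_0(B(x,2k+1))+(1-t)\,\mu_1(B(x,2k+1))}{t\,\mu_0(B(x,k))+(1-t)\,\mu_1(B(x,k))},
$$
and apply Lemma~\ref{l:holder} with $m=2$, $\alpha_1=t\,\mu_0(B(x,2k+1))$, $\beta_1=t\,\mu_0(B(x,k))$, $\alpha_2=(1-t)\,\mu_1(B(x,2k+1))$, $\beta_2=(1-t)\,\mu_1(B(x,k))$. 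This gives
$$
\frac{\mu(B(x,2k+1))}{\mu(B(x,k))}\leq\max\Big\{\frac{\mu_0(B(x,2k+1))}{\mu_0(B(x,k))},\frac{\mu_1(B(x,2k+1))}{\mu_1(B(x,k))}\Big\}\leq C_G.
$$
Taking the supremum over all admissible $(x,k)$ yields $C_\mu\leq C_G$. Since $C_G$ is by definition the infimum of $C_\nu$ over all doubling measures $\nu$, and $\mu$ is plainly a (positive, finite-ratio) doubling measure, we get $C_\mu\geq C_G$ as well, hence $C_\mu=C_G$ and $\mu\in DM(G)$. Combined with the cone property this shows $DM(G)$ is a convex cone, and it is non-empty as discussed.

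I do not expect a serious obstacle here; the only point requiring attention is the non-emptiness in the genuinely infinite case (the compactness/diagonal extraction and verifying the limit is a bona fide doubling measure with the right constant), whereas the convex-cone part is a clean one-line application of Lemma~\ref{l:holder}. If one wishes to sidestep the infinite-case subtlety entirely, it suffices to observe that the statement is only invoked later for $L_n$ and $\mathbb Z$, and for $\mathbb Z$ the minimizer is exhibited explicitly (the counting measure), so non-emptiness there is concrete rather than abstract.
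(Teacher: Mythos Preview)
Your convexity argument via Lemma~\ref{l:holder} is exactly the paper's, and the cone remark is fine. The issue is your treatment of non-emptiness.

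First, a misreading: the line ``if the infimum is attained\dots'' preceding the proposition is a \emph{definition} of the term doubling minimizer, not a hypothesis. Proposition~\ref{p:DMconvex} asserts and \emph{proves} that the infimum is attained whenever $C_G<\infty$; you cannot take it as given. Your fallback to ``it is only invoked for $L_n$ and $\mathbb Z$'' is not a proof of the stated proposition.

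Second, your diagonal-extraction sketch is the right idea (the paper uses an ultrafilter limit, and its Remark notes the diagonal variant works too), but you skip the one step that makes it go through: the uniform pointwise bounds. The paper normalizes a minimizing sequence by $\mu_n(v_0)=1$ and then proves, by induction on $m=d(v,v_0)$, that $\sup_n\mu_n(v)<\infty$ for every vertex $v$, using $\mu_n(w)\leq\mu_n(B(v,1))\leq C_{\mu_n}\mu_n(v)$ for $w$ adjacent to $v$. Without this bound you have no compactness to extract from; with it, the diagonal (or ultrafilter) limit exists vertexwise, and the same inequality read the other way gives the strict positivity you flagged as needing care. Your finite-graph compactness argument has the same gap in a milder form: you assert one may restrict to a region where the weights are bounded above and below, but you do not say why such a region captures a minimizing sequence; the paper supplies this via $n^{-1-\log_2 n}\leq\mu(j)\leq 1$ for probability measures with $C_\mu\leq n$.
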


\begin{proof}
We show first that $DM(G)\neq \emptyset$. For every $n\in\mathbb N$, let $\nu_n$ be a measure on $G$ such that $C_{\nu_n}\leq C_G+\frac1n$. Fix $v_0\in V_G$ and set $\mu_n=\frac{\nu_n}{\nu_n(v_0)}$. Thus, we have $C_{\nu_n}=C_{\mu_n}\leq C_G+\frac1n$ and $\mu_n(v_0)=1$, for every $n\in\mathbb N$.

We claim that, for every $v\in V_G$, we have that  $\sup_n \mu_n(v)<\infty$. Indeed, this will follow by induction on $m=d(v,v_0)$: clearly, the statement holds for $m=0$; that is, when $v=v_0$; now suppose $\sup_n \mu_n(v)<\infty$, for every $v\in V_G$, with $d(v,v_0)\leq m$, and let $w\in V_G$ with $d(w,v_0)=m+1$; we can pick $v_1\in V_G$ with $d(v_1,v_0)=m$ and $d(w,v_1)=1$; hence, we have
$$
\mu_n(w)\leq \mu_n(B(v_1,1))\leq C_G\,\mu_n(v_1),
$$
and the claim follows.

Let now $\mathcal U$ be a free ultrafilter on $\mathbb N$, and define $\mu$ on $G$ by
$$
\mu(v)=\lim_{n\in\mathcal U}\mu_n(v),
$$
where $\lim_{n\in\mathcal U}$ denotes the limit along the ultrafilter. Note that $\mu(v)$ is well defined because of the claim. It is straightforward to check now that $C_\mu=C_G$.

\medskip
Another elementary proof of this fact, in the case $G$ is finite, goes as follows:  Let $\lambda$ be the counting measure and  $n=\lambda(V_G)$. Then we have the trivial estimate:
$$
C_{G}\leq C_{\lambda}\leq n.
$$
Note also that for $\alpha>0$, $C_{\alpha\mu}=C_\mu$, where $\alpha\mu$ is the measure given by scalar multiplication ($[\alpha\mu](j)=\alpha \cdot \mu(j)$ for every $j\in V_G$). 
Therefore, we can assume without loss of generality that the measures we deal with, when computing the infimum in $C_G$, satisfy $\mu(V_G)=1$ and $C_\mu\leq n$. Thus, for any such measure  and any $j\in V_G$, we have, iterating \eqref{defdb}:
$$
1=\mu(V_G)=\mu(B(j,n))\leq C_\mu^{1+\log_2 n}\mu(j)\leq n^{1+\log_2 n}\mu(j),
$$
which yields
$$
n^{-1-\log_2 n}\leq \min_j \mu(j).
$$
We also have that $\max_j \mu(j)\leq \mu(V_G)=1$. Hence, if we consider the following compact set (as a subset of $\mathbb R^n$, with the Euclidean topology):
$$
K=\{\mu:n^{-1-\log_2 n}\leq \mu(j)\leq 1, \,\text{for } j\in V_G\},
$$
then we have 
$$
C_{G}=\inf_{\mu\in K} C_\mu.
$$
Finally, note that the map 
$$
\begin{array}{ccl}
 K & \longrightarrow  & \mathbb R  \\
 \mu&\longmapsto & C_\mu=\sup\{\frac{\mu(B(j,2k+1))}{\mu(B(j,k))}:j\in V_G, 1\leq k\leq n-1\} 
\end{array}
$$
is a continuous function, and we conclude that there is $\mu\in K$ such that $C_{G}=C_\mu$, as claimed.

\medskip

Let us now study the convexity property. In order to see that  $DM(G)$ is a convex cone, it is enough to show that if $\mu_1,\mu_2\in DM(G)$, then $\mu_1+\mu_2\in DM(G)$.  Thus, suppose $C_{\mu_i}=C_G$, for $i=1,2$ and let $\mu=\mu_1+\mu_2$. For every $j\in V_G$ and $k\in\{0,1,2,\dots\}$, and using Lemma~\ref{l:holder}, we have
\begin{align*}
\frac{\mu(B(j,2k+1))}{\mu(B(j,k))}&=\frac{\mu_1(B(j,2k+1))+\mu_2(B(j,2k+1))}{\mu_1(B(j,k))+\mu_2(B(j,k))}\\
&\leq \max\Big\{\frac{\mu_1(B(j,2k+1))}{\mu_1(B(j,k))},\frac{\mu_2(B(j,2k+1))}{\mu_2(B(j,k))}\Big\}\\
&\leq \max\{C_{\mu_1},C_{\mu_2}\}=C_{G}.
\end{align*}
It then follows  that $C_\mu\leq C_G$. By definition of $C_G$ the conclusion follows.
\end{proof}

\begin{remark}
There is yet another proof of Proposition~\ref{p:DMconvex} based on a  diagonalization argument, avoiding the use of ultrafilters (and, hence, the Axiom of Choice). Regarding the geometry of $DM(G)$, we will show, in Theorem~\ref{t:minimizersZ}, a uniqueness result for the minimizers of  $G=\Z$; namely,   $DM(\Z)$ is the ray generated by the counting measure.
\end{remark}

\section{The constant $C_{L_n}^0$ for finite linear graphs }\label{sec3}

An important information, related to $C_G$, is the consideration of the doubling constant when restricted to the collection of neighbors of a given vertex. In particular, for a doubling measure $\mu$ of $G$,
we define
$$
C_{\mu}^0=\sup_{x\in V_G}\frac{\mu(B(x,1))}{\mu(x)},
$$
and 
$$
C_{G}^0=\inf_{\mu} C_\mu^0.
$$
Observe that 
$$
C_{G}^0\le C_G.
$$
Similar to the result in Proposition~\ref{p:DMconvex}, it can be proved that the constant $C_{G}^0$, which, in general, is easier to calculate, is always attained for some particular doubling measure $\mu$.  In fact, we are now going to find the exact value of the constant $C_{L_n}^0.$ The proof will be split in some elementary lemmas. Throughout this section, for a doubling measure $\mu$ on $L_n$, we set $a_j=\mu(j)$. 

We first observe that, when computing the infimum defining  $C_{L_n}$ or $C_{L_n}^0$, the following result allows us to assume that the measure is symmetric; that is, $a_j=a_{n+1-j}$. Recall that, by assumption, we have $a_j>0$, for every $j$. 

\begin{lem}\label{l:symmetric}
Given $\mu$ on $L_n$, let $\tilde \mu(j)=\mu(j)+\mu(n+1-j)$. Then, $C^0_{\tilde\mu}\leq C^0_\mu$ and $C_{\tilde\mu}\leq C_\mu$.
\end{lem}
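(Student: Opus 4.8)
The plan is to compare, vertex by vertex, the relevant ball-quotients for $\tilde\mu$ against those for $\mu$, using the symmetry $\tilde\mu(j)=\tilde\mu(n+1-j)$ together with Lemma~\ref{l:holder}. Write $a_j=\mu(j)$ and $b_j=\tilde\mu(j)=a_j+a_{n+1-j}$. The key observation is that the isometry $\sigma\colon j\mapsto n+1-j$ of $L_n$ maps balls to balls: $\sigma(B(x,r))=B(n+1-x,r)$. Hence if we denote by $\mu^\sigma$ the pushforward measure $\mu^\sigma(j)=\mu(n+1-j)$, then $\mu(B(n+1-x,r))=\mu^\sigma(B(x,r))$ for every $x$ and $r$, and in particular $C^0_{\mu^\sigma}=C^0_\mu$ and $C_{\mu^\sigma}=C_\mu$, while $\tilde\mu=\mu+\mu^\sigma$.

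First I would treat $C^0$. For a fixed vertex $x$, applying Lemma~\ref{l:holder} with the two pairs $(\alpha_1,\beta_1)=(\mu(B(x,1)),\mu(x))$ and $(\alpha_2,\beta_2)=(\mu^\sigma(B(x,1)),\mu^\sigma(x))$ gives
$$
\frac{\tilde\mu(B(x,1))}{\tilde\mu(x)}=\frac{\mu(B(x,1))+\mu^\sigma(B(x,1))}{\mu(x)+\mu^\sigma(x)}\le\max\Big\{\frac{\mu(B(x,1))}{\mu(x)},\frac{\mu^\sigma(B(x,1))}{\mu^\sigma(x)}\Big\}\le\max\{C^0_\mu,C^0_{\mu^\sigma}\}=C^0_\mu.
$$
Taking the supremum over $x\in V_{L_n}$ yields $C^0_{\tilde\mu}\le C^0_\mu$. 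The argument for $C_{\tilde\mu}\le C_\mu$ is identical, replacing the single-radius quotient by $\mu(B(x,2k+1))/\mu(B(x,k))$ for $x\in V_{L_n}$ and $0\le k\le\lceil(n-1)/2\rceil$, and invoking the characterization \eqref{eq:C_G} of $C_{L_n}$ as an infimum over exactly these quotients; since each such quotient for $\tilde\mu$ is bounded by the max of the corresponding quotients for $\mu$ and for $\mu^\sigma$, both of which are at most $C_\mu$, we conclude $C_{\tilde\mu}\le C_\mu$ and hence the claim.

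There is essentially no hard part here: the statement is a clean consequence of Lemma~\ref{l:holder} combined with the fact that $\sigma$ is a graph isometry, so that adding $\mu$ to its reflection can only average down each ratio. The one point that deserves a word of care is that one must use the same indexing set of quotients $(x,k)$ for $\mu$, $\mu^\sigma$ and $\tilde\mu$ — which is legitimate precisely because $\sigma$ permutes $V_{L_n}$ and preserves radii, so the collection of quotients defining $C_{\mu^\sigma}$ is literally a reindexing of that defining $C_\mu$. After this, the desired inequalities (and, as an immediate corollary, the reduction to symmetric measures when computing $C_{L_n}$ and $C^0_{L_n}$) follow at once.
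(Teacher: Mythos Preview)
Your proof is correct and matches the paper's approach essentially line for line: both apply Lemma~\ref{l:holder} to the decomposition $\tilde\mu(B(j,r))=\mu(B(j,r))+\mu(B(n+1-j,r))$ and use that the reflection is an isometry so the second ratio is again bounded by $C_\mu$. The only cosmetic difference is that you package the reflection as a pushforward measure $\mu^\sigma$, while the paper writes $\mu(B(n+1-j,r))$ directly.
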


\begin{proof}
Given $0\leq k\leq \lceil \frac{n-1}{2}\rceil$, for every $1\leq j\leq n$, using Lemma~\ref{l:holder} we have
\begin{align*}
\frac{\tilde\mu(B(j,2k+1))}{\tilde\mu(B(j,k))}&=\frac{\mu(B(j,2k+1))+\mu(B(n+1-j,2k+1))}{\mu(B(j,k))+\mu(B(n+1-j,k))}\\
&\leq\max\Big\{\frac{\mu(B(j,2k+1))}{\mu(B(j,k))},\frac{\mu(B(n+1-j,2k+1))}{\mu(B(n+1-j,k))}\Big\}\leq C_\mu.
\end{align*}
Hence, $C_{\tilde\mu}\leq C_\mu$. Setting $k=0$, in the above inequality, we also get $C^0_{\tilde\mu}\leq C^0_\mu$ .
\end{proof}

We say that a measure $\mu$ on $L_n$ has a local minimum if there is $1< i< n$ such that $a_i\leq\min\{a_{i-1},a_{i+1}\}$.

\begin{lem}\label{l:localmin}
If a measure $\mu$ on $L_n$ has a local minimum, then $C^0_\mu\geq3$.
\end{lem}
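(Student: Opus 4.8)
The plan is to exploit the local minimum directly: suppose $a_i \le \min\{a_{i-1}, a_{i+1}\}$ for some $1 < i < n$, and aim to show that the quotient $\mu(B(i,1))/\mu(i)$ is already at least $3$, which forces $C^0_\mu \ge 3$. Writing this quotient out, we have
\[
\frac{\mu(B(i,1))}{\mu(i)} = \frac{a_{i-1} + a_i + a_{i+1}}{a_i}.
\]
The first instinct is that since $a_{i-1} \ge a_i$ and $a_{i+1} \ge a_i$, this quotient is at least $\frac{a_i + a_i + a_i}{a_i} = 3$, and we would be done immediately. So the heart of the argument is really just this observation; the only subtlety is making sure the indices $i-1$ and $i+1$ genuinely lie in $\{1, \dots, n\}$ — but this is exactly what the hypothesis $1 < i < n$ guarantees, so $B(i,1) = \{i-1, i, i+1\}$ with no truncation at the endpoints of the path.

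First I would restate the hypothesis in terms of the weights $a_j = \mu(j)$, noting that the existence of a local minimum means precisely that there is an interior index $i$ with $a_{i-1} \ge a_i$ and $a_{i+1} \ge a_i$. Then, since $a_i > 0$, I would divide through and conclude
\[
C^0_\mu \ge \frac{\mu(B(i,1))}{\mu(i)} = \frac{a_{i-1}}{a_i} + 1 + \frac{a_{i+1}}{a_i} \ge 1 + 1 + 1 = 3.
\]
Since $C^0_\mu = \sup_{x \in V_{L_n}} \mu(B(x,1))/\mu(x)$ is a supremum over all vertices, this single vertex $i$ suffices to establish the bound.

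Honestly, there is no real obstacle here: the statement is essentially immediate once one writes down $B(i,1)$ correctly at an interior vertex and uses the defining inequalities of a local minimum. If any care is needed at all, it is purely bookkeeping — confirming that the definition of local minimum used in the paper (an interior index, so both neighbors exist) is what makes the ball a genuine three-point set, so that both $a_{i-1}/a_i$ and $a_{i+1}/a_i$ contribute. I would present the proof in two or three lines accordingly.
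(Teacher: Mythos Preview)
Your proposal is correct and is essentially identical to the paper's own proof: both simply evaluate $\mu(B(i,1))/\mu(i)=(a_{i-1}+a_i+a_{i+1})/a_i$ at the interior local minimum and use $a_{i-1},a_{i+1}\ge a_i$ to conclude the quotient is at least $3$. There is nothing to add.
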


\begin{proof}
Let $1< i< n$ be such that $a_i\leq\min\{a_{i-1},a_{i+1}\}$. Clearly,
$$
C_\mu^0\geq\frac{\mu(B(i,1))}{\mu(B(i,0))}=\frac{a_{i-1}+a_i+a_{i+1}}{a_i}\geq3.
$$
\end{proof}

The following particular measure will play a key role in our computations: 

\begin{lem}\label{l:regularpolygon}
Given $n\in\mathbb N$, let $\sigma$ on $L_n$ be given by 
$$
\sigma(j)=\sin\Big(\frac{j\pi}{n+1}\Big).
$$
Then,  $C^0_\sigma=1+2\cos(\frac{\pi}{n+1}).$
\end{lem}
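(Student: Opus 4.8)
The plan is to compute the supremum defining $C^0_\sigma$ by evaluating the ratio $\frac{\sigma(B(j,1))}{\sigma(j)}$ at every vertex $j\in\{1,\dots,n\}$ and checking that the largest value is exactly $1+2\cos(\frac{\pi}{n+1})$. First I would treat the interior vertices $2\le j\le n-1$: here $B(j,1)=\{j-1,j,j+1\}$, so
$$
\frac{\sigma(B(j,1))}{\sigma(j)}=\frac{\sin\frac{(j-1)\pi}{n+1}+\sin\frac{j\pi}{n+1}+\sin\frac{(j+1)\pi}{n+1}}{\sin\frac{j\pi}{n+1}}.
$$
The key trigonometric identity is the sum-to-product formula $\sin\frac{(j-1)\pi}{n+1}+\sin\frac{(j+1)\pi}{n+1}=2\cos\frac{\pi}{n+1}\sin\frac{j\pi}{n+1}$, which immediately collapses the interior ratio to the constant $1+2\cos\frac{\pi}{n+1}$, independently of $j$. (This is precisely the eigenvalue relation for the path-graph adjacency matrix, which is why $\sigma$ is the natural candidate.)

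Next I would handle the two endpoints $j=1$ and $j=n$, which by the symmetry $\sigma(j)=\sigma(n+1-j)$ give the same ratio; take $j=1$. Then $B(1,1)=\{1,2\}$ and
$$
\frac{\sigma(B(1,1))}{\sigma(1)}=\frac{\sin\frac{\pi}{n+1}+\sin\frac{2\pi}{n+1}}{\sin\frac{\pi}{n+1}}=1+2\cos\frac{\pi}{n+1},
$$
using $\sin\frac{2\pi}{n+1}=2\cos\frac{\pi}{n+1}\sin\frac{\pi}{n+1}$. So the endpoint ratio equals the interior ratio. (The degenerate small cases $n=1$, where there are no interior vertices and $B(1,1)=\{1\}$ giving ratio $1=1+2\cos\frac{\pi}{2}$, and $n=2$, should be noted to fit the same formula.) Since every vertex yields exactly the value $1+2\cos\frac{\pi}{n+1}$, the supremum over $x\in V_{L_n}$ is that common value, which is the claim.

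I do not expect a genuine obstacle here: the lemma is essentially the observation that $\sigma$ is an eigenvector of the closed-ball-sum operator with eigenvalue $1+2\cos\frac{\pi}{n+1}$, and the proof is just the two sum-to-product computations above plus the bookkeeping that endpoints and interior points agree. The only mild care needed is to make sure the endpoint balls $\{1,2\}$ and $\{n-1,n\}$ are handled separately from the interior and to keep track of the trivial small-$n$ cases so the stated identity holds verbatim for all $n\in\mathbb N$.
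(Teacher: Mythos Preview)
Your proposal is correct and is essentially the same argument as the paper's: both reduce the ratio to $1+2\cos\frac{\pi}{n+1}$ via the sum-to-product identity $\sin\frac{(j-1)\pi}{n+1}+\sin\frac{(j+1)\pi}{n+1}=2\cos\frac{\pi}{n+1}\sin\frac{j\pi}{n+1}$. The only cosmetic difference is that the paper treats all $1\le j\le n$ uniformly by exploiting $\sin 0=\sin\pi=0$ (so the ``phantom'' neighbours at $j=0$ and $j=n+1$ contribute nothing), whereas you split off the endpoints and use the double-angle formula there; the content is identical.
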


\begin{proof}
Recall the well-known formula:
\begin{equation}\label{eq:sine}
\sin(x+y)+\sin(x-y)=2\sin x\cos y.
\end{equation}

Note that, for $1\leq j\leq n$, using \eqref{eq:sine}, we have
$$
\frac{\sin\Big(\frac{(j-1)\pi}{n+1}\Big)+\sin\Big(\frac{(j+1)\pi}{n+1}\Big)}{\sin\Big(\frac{j\pi}{n+1}\Big)}=2\cos\Big(\frac{\pi}{n+1}\Big).
$$
Therefore, we have that 
\begin{align*}
C_\sigma^0&=\sup_{1\leq j\leq n} \frac{\sigma(B(j,1))}{\sigma(B(j,0))}\\
&=\sup_{1\leq j\leq n}\frac{\sin\Big(\frac{(j-1)\pi}{n+1}\Big)+\sin\Big(\frac{j\pi}{n+1}\Big)+\sin\Big(\frac{(j+1)\pi}{n+1}\Big)}{\sin\Big(\frac{j\pi}{n+1}\Big)} \\
&=1+2\cos\Big(\frac{\pi}{n+1}\Big).
\end{align*}
\end{proof}

Note that the counting measure $\lambda$ (or equivalently $a_j=1$, for every $j$), satisfies $C_{\lambda}=C_\lambda^0=3$ in $L_n$, so that when computing $C^0_{L_n}$  we can restrict the infimum to the set of measures with $C^0_\mu <3$, which is actually non-empty by Lemma \ref{l:regularpolygon}.

\begin{lem}\label{l:C<3}
If $\mu$ is a symmetric measure in $L_n$, with $C^0_\mu<3$, then $a_i< a_j<\frac{j}{i}a_i$ for $1\leq i<j\leq \lceil\frac{n}{2}\rceil$. 
\end{lem}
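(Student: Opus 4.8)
The plan is to exploit the symmetry $a_j = a_{n+1-j}$ together with the hypothesis $C^0_\mu < 3$ to deduce, first, strict monotonicity of the sequence $(a_j)$ on the first half of the vertices, and then a quantitative upper bound of the form $a_j < \frac{j}{i}a_i$. Throughout, write $N = \lceil \frac{n}{2}\rceil$.

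First I would establish strict monotonicity: $a_1 < a_2 < \cdots < a_N$. By Lemma~\ref{l:localmin}, since $C^0_\mu < 3$, the measure $\mu$ has no local minimum, so for every interior vertex $1 < i < n$ we have either $a_i > a_{i-1}$ or $a_i > a_{i+1}$ (in fact $a_i > \min\{a_{i-1},a_{i+1}\}$ is forbidden from both sides being $\ge a_i$, so strictly $a_i > \min\{a_{i-1},a_{i+1}\}$ fails, meaning $a_i > a_{i-1}$ or $a_i > a_{i+1}$). I would argue that, combined with the symmetry $a_j = a_{n+1-j}$ (which forces the sequence to be ``unimodal'' with its peak in the middle), no local minimum forces the sequence to be strictly increasing up to the center: if $a_i \le a_{i+1}$ failed for some $i < N$, i.e. $a_i > a_{i+1}$, one chases this down/up and uses symmetry to produce an interior local minimum, a contradiction. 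This gives $a_i < a_{i+1}$ for all $1 \le i < N$, hence $a_i < a_j$ whenever $1 \le i < j \le N$.

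Next, for the upper bound $a_j < \frac{j}{i}a_i$, the key inequality is the estimate at each interior vertex coming from $C^0_\mu < 3$: for $1 < k < n$,
\[
a_{k-1} + a_k + a_{k+1} = \mu(B(k,1)) = \mu(B(k,1)) \le C^0_\mu\, a_k < 3a_k,
\]
so $a_{k+1} - a_k < 2(a_k - a_{k-1})$, i.e. the forward differences $d_k := a_{k+1} - a_k$ satisfy $d_k < 2 d_{k-1}$ for $2 \le k \le N-1$ (all $d_k > 0$ by the monotonicity step). Wait—this alone will not directly give the ratio bound; instead I would argue more carefully that the sublinearity of $(a_j)$ follows from concavity-type control. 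The cleaner route: from $a_{k-1} + a_{k+1} < 2a_k$ for interior $k$ (which is exactly the rearrangement of $a_{k-1}+a_k+a_{k+1} < 3a_k$), the sequence $(a_j)$ is \emph{strictly concave} on $\{1,\dots,N\}$ (using that vertex $N$ or $N\!+\!1$ lies in the symmetric ``flat or decreasing'' region so the concavity inequality persists up to $j = N$ via symmetry at the center). A strictly concave positive sequence with $a_1 > 0$ satisfies $a_j / j < a_i / i$ for $i < j$: indeed concavity gives $a_j \le a_i + (j-i)\frac{a_i - a_0^{*}}{i}$ type bounds, but most transparently, $a_i$ lies strictly above the chord from $(0,0)$ to $(j, a_j)$ when the sequence is concave and would pass through the origin if extended—so $a_i > \frac{i}{j} a_j$, which is the claim. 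I would make the ``extends to the origin'' step rigorous by noting $a_1 > 0 = a_0$ is consistent with, and the concavity of $(a_0, a_1, \dots)$ with $a_0 := 0$ can be checked at $k=1$: is $a_0 + a_2 < 2a_1$, i.e. $a_2 < 2a_1$? This is exactly the $C^0_\mu < 3$ inequality at the vertex $k=1$ adjacent to the (nonexistent) boundary—here one uses $\mu(B(1,1)) = a_1 + a_2 < 3a_1$ is \emph{not} quite what we need; rather $a_2 < 2a_1$ requires the stronger input, so I would instead derive $a_2 < 2a_1$ directly, or absorb the endpoint via symmetry.

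The main obstacle I anticipate is precisely the \textbf{endpoint/center bookkeeping}: the concavity inequality $a_{k-1} + a_{k+1} < 2a_k$ is only immediate for \emph{interior} vertices $1 < k < n$, so I must verify it holds for all $k$ in the range $1 \le k \le N$ needed, handling separately the behavior near the center (where symmetry $a_j = a_{n+1-j}$ must be invoked—e.g. for $n$ even, $a_{N} = a_{N+1}$, so the difference $d_N = 0$, consistent with concavity; for $n$ odd, $a_N$ is the strict peak) and near $k=1$ (where the ``virtual'' value $a_0 = 0$ and the inequality $a_2 < 2a_1$ must be extracted, likely by running the concavity recursion backward from the center down to $k=1$ and using that the total drop cannot overshoot). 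Once the pure concavity statement for $(0, a_1, \dots, a_N)$ is in hand, the ratio bound $\frac{a_i}{i} > \frac{a_j}{j}$ for $1 \le i < j \le N$ is a standard convexity fact (the slope of chords from the origin is decreasing), giving $a_j < \frac{j}{i} a_i$, and combined with the monotonicity step we get the full chain $a_i < a_j < \frac{j}{i} a_i$.
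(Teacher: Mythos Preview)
Your approach is correct and essentially the same as the paper's: both arguments hinge on the strict concavity inequality $a_{k-1}+a_{k+1}<2a_k$ (the paper unrolls it as an induction proving $a_{j+1}<\tfrac{j+1}{j}a_j$, while you phrase it as ``slopes from the origin decrease''), and both use Lemma~\ref{l:localmin} plus symmetry for strict monotonicity.

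The one place you trip yourself up is the endpoint $k=1$: you write that $\mu(B(1,1))=a_1+a_2<3a_1$ ``is not quite what we need; rather $a_2<2a_1$ requires the stronger input.'' But $a_1+a_2<3a_1$ \emph{is} $a_2<2a_1$---just subtract $a_1$ from both sides. So setting $a_0:=0$, the strict concavity $a_{k-1}+a_{k+1}<2a_k$ holds for every $1\le k\le n-1$ directly from $C_\mu^0<3$ (at the leaf $k=1$ the ball $B(1,1)$ has only two vertices, which is precisely what makes the inequality work there). With that in hand, your concavity-implies-decreasing-secant-slopes argument goes through with no further obstacle; the center bookkeeping (whether $n$ is even or odd) is only needed for the monotonicity half, and you've identified that correctly.
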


\begin{proof}
Using the symmetry and  Lemma~\ref{l:localmin}, we already know that $a_i\leq a_j$. Suppose that for some  $1\leq j<\lceil\frac{n}{2}\rceil$ we had $a_{j-1}=a_{j}$. Then 
$$
C^0_\mu\geq\frac{\mu(B(j,1))}{\mu(B(j,0))}=\frac{a_{j-1}+a_{j}+a_{j+1}}{a_{j}}\geq 3.
$$
Similarly, for $j=\lceil\frac{n}{2}\rceil$, note that $a_{j+1}$ coincides either with $a_j$ or $a_{j-1}$ (depending on whether $n$ is even or odd), so if we had $a_{j-1}=a_j$ in this case, it also follows that
$$
C^0_\mu\geq\frac{\mu(B(j,1))}{\mu(B(j,0))}=\frac{a_{j-1}+a_{j}+a_{j+1}}{a_{j}}= 3.
$$
For the other inequality, we will prove by induction that $a_{j+1}<\frac{j+1}{j} a_j$ for $j<\lceil\frac{n}{2}\rceil$: For $j=1$, if we had $a_2\geq 2a_1$, then
$$
C^0_\mu\geq\frac{\mu(B(1,1))}{\mu(B(1,0))}=\frac{a_1+a_2}{a_1}\geq 3.
$$
Now, suppose $a_j<\frac{j}{j-1}a_{j-1}$. If we had $a_{j+1}\geq\frac{j+1}ja_j$, then 
$$
C^0_\mu\geq\frac{\mu(B(j,1))}{\mu(B(j,0))}=\frac{a_{j-1}+a_j+a_{j+1}}{a_j}> \frac{j-1}{j}+1+\frac{j+1}j =3.
$$
In particular, it follows that for $1\leq i<j\leq\lceil\frac{n}{2}\rceil$ we have $a_j<\frac{j}{i}a_i$.
\end{proof}

\begin{lem}\label{l:lowerbound}
Let $n\in\mathbb N$, $a_1,\ldots, a_n\in\mathbb R_+$, such that $a_i=a_{n+1-i}$ and $a_i\leq a_{i+1}$, for $1\leq i<\lceil \frac{n}{2}\rceil$. Setting $a_0=a_{n+1}=0$ we have
$$
\max_{1\leq i\leq n}\Big\{\frac{a_{i-1}+a_{i+1}}{a_i}\Big\}\geq2\cos\Big(\frac{\pi}{n+1}\Big).
$$
\end{lem}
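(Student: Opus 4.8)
The plan is to argue by contradiction, using that $\lambda:=2\cos(\frac{\pi}{n+1})$ is the top eigenvalue of the path-graph adjacency operator and that the measure $\sigma$ from Lemma~\ref{l:regularpolygon} furnishes a \emph{strictly positive} eigenvector. Concretely, for $0\le j\le n+1$ set $\sigma_j=\sin(\frac{j\pi}{n+1})$, so that $\sigma_0=\sigma_{n+1}=0$, $\sigma_j>0$ for $1\le j\le n$, and, by the identity \eqref{eq:sine} already exploited in Lemma~\ref{l:regularpolygon},
\begin{equation*}
\sigma_{j-1}+\sigma_{j+1}=\lambda\,\sigma_j,\qquad 1\le j\le n.
\end{equation*}
First I would assume, towards a contradiction, that $\frac{a_{i-1}+a_{i+1}}{a_i}<\lambda$ for every $1\le i\le n$; since each $a_i>0$, this is the same as $a_{i-1}+a_{i+1}<\lambda a_i$ for all such $i$.

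The core step is to pair these inequalities against $\sigma$: multiplying the $i$-th inequality by $\sigma_i>0$ and summing over $i=1,\dots,n$ gives
\begin{equation*}
\sum_{i=1}^n \sigma_i\,(a_{i-1}+a_{i+1})<\lambda\sum_{i=1}^n \sigma_i a_i .
\end{equation*}
I would then show that the left-hand side is in fact \emph{equal} to $\lambda\sum_{i=1}^n\sigma_i a_i$. This is a discrete summation-by-parts computation: reindexing, $\sum_{i=1}^n\sigma_i a_{i-1}=\sum_{j=1}^n\sigma_{j+1}a_j$ and $\sum_{i=1}^n\sigma_i a_{i+1}=\sum_{j=1}^n\sigma_{j-1}a_j$ (the terms that would fall outside the range $1\le j\le n$ vanish because $a_0=a_{n+1}=0$ and $\sigma_0=\sigma_{n+1}=0$), whence the left-hand side becomes $\sum_{j=1}^n a_j(\sigma_{j-1}+\sigma_{j+1})=\lambda\sum_{j=1}^n a_j\sigma_j$ by the three-term relation. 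Plugging this back in yields $\lambda\sum_{j=1}^n a_j\sigma_j<\lambda\sum_{i=1}^n a_i\sigma_i$, an absurdity (both sums are positive and $\lambda>0$). Therefore some index $i$ must satisfy $\frac{a_{i-1}+a_{i+1}}{a_i}\ge\lambda$, which is the assertion.

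The argument is genuinely short, and the only delicate point is the bookkeeping in the reindexing — one has to be careful that exactly the boundary vanishing $a_0=a_{n+1}=\sigma_0=\sigma_{n+1}=0$ makes the two shifted sums align with the unshifted one. I would also remark that neither the symmetry $a_i=a_{n+1-i}$ nor the monotonicity $a_i\le a_{i+1}$ is used in this proof: the bound holds for \emph{every} vector of positive weights, these hypotheses being merely inherited from the context in which the lemma will be applied. (In spirit this is just the Collatz--Wielandt lower estimate $\max_i (A\mathbf a)_i/a_i\ge\lambda_{\max}(A)$ for the nonnegative adjacency matrix $A$ of $L_n$, made elementary by using the explicit Perron eigenvector $\sigma$.)
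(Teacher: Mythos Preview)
Your proof is correct and is genuinely different from the paper's. The paper normalizes so that the maximal weight is $1$, writes $a_i=\sin\alpha_i$ with $0\le\alpha_1\le\cdots\le\alpha_m=\pi/2$, picks the index $i_0$ where the gap $d_i=\alpha_i-\alpha_{i-1}$ is minimal (hence $d_{i_0}\le\pi/(n+1)$), and then runs a three-case trigonometric analysis on the adjacent gap $d_{i_0+1}$. That argument actually \emph{uses} the symmetry and monotonicity to set up the substitution and the case split. Your route is the Collatz--Wielandt/Perron--Frobenius duality made explicit via the known eigenvector $\sigma$: the summation-by-parts identity $\sum_i\sigma_i(a_{i-1}+a_{i+1})=\sum_j a_j(\sigma_{j-1}+\sigma_{j+1})$ (which holds precisely because $a_0=a_{n+1}=\sigma_0=\sigma_{n+1}=0$) immediately forces the maximum quotient to be at least $\lambda$. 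This is shorter, needs no case distinctions, and---as you rightly observe---does not use the symmetry or monotonicity hypotheses at all. The paper is aware of the spectral viewpoint: the Remark following Lemma~\ref{l:lowerbound} mentions an alternative proof via Chebyshev polynomials $U_n$, and Theorem~\ref{thm:doubling spectra} records the general identity $C_G^0=1+\lambda_1(A_G)$; your argument is essentially the clean elementary incarnation of that remark.
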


\begin{proof}
Let $m=\lceil \frac{n}{2}\rceil$. Dividing all the numbers $a_i$ by the largest of them, that is $a_m$, we can take an increasing family $(\alpha_i)_{i=1}^m\subset[0,\frac{\pi}{2}]$ such that for $1\leq i\leq m$
$$
a_i=\sin \alpha_i.
$$
Note that $\alpha_m=\frac{\pi}{2}$. We set $\alpha_0=0=\alpha_{n+1}$, and for $1\leq i\leq m$, let $d_i=\alpha_i-\alpha_{i-1}$. Let $1\leq i_0\leq m$ such that 
$$
d_{i_0}=\min_{1\leq i\leq m}d_{i}.
$$
Note that, in particular,  since $\sum_{i=1}^{n+1}d_i=\pi$, we must have $d_{i_0}\leq\frac{\pi}{n+1}$.
\medskip

Now, we distinguish three cases:
\begin{enumerate}[leftmargin=*]
\item[(a)] Suppose that $d_{i_0+1}\leq\frac{\pi}{n+1}$.  Hence, using the trigonometric formula for the sum of angles, the fact that $\cos x$ is a decreasing function for $x\in[0,\pi]$ and the choice of $i_0$ we have that
\begin{align*}
\max_{1\leq i\leq n}\Big\{\frac{a_{i-1}+a_{i+1}}{a_i}\Big\}&\geq\frac{\sin\alpha_{i_0-1}+\sin\alpha_{i_0+1}}{\sin\alpha_{i_0}}\\
&=\cos d_{i_0}+\cos d_{i_0+1}+\frac{\cos \alpha_{i_0}}{\sin\alpha_{i_0}}\big(\sin d_{i_0+1}-\sin d_{i_0}\big)\\
&\geq2\cos\Big(\frac{\pi}{n+1}\Big).
\end{align*}

\item[(b)] Suppose that $i_0=m$. Note that we have $\alpha_{i_0}=\pi/2$ and depending on whether $n$ is even or odd, we would have $\alpha_{i_0+1}=\alpha_{i_0}$ or $\alpha_{i_0+1}=\alpha_{i_0-1}$. In either case we have $d_{i_0+1}\leq\frac{\pi}{n+1}$, and we proceed as in case (a).

\item[(c)] Suppose now that $i_0<m$ and $\alpha_{i_0+1}-\alpha_{i_0}=d_{i_0+1}\geq\frac{\pi}{n+1}$. In this case, we have $\alpha_{i_0+1}\leq\pi/2$ and
$$
\alpha_{i_0+1}\geq \alpha_{i_0}+\frac{\pi}{n+1},\quad \alpha_{i_0-1}\geq \alpha_{i_0}-\frac{\pi}{n+1}.
$$
Since $\sin x$ is an increasing function for $x\in[0,\pi/2]$ we have that
\begin{align*}
\max_{1\leq i\leq n}\Big\{\frac{a_{i-1}+a_{i+1}}{a_i}\Big\}&\geq\frac{\sin \alpha_{i_0-1}+\sin\alpha_{i_0+1}}{\sin\alpha_{i_0}}\\
&\geq\frac{\sin(\alpha_{i_0}-\frac{\pi}{n+1})+\sin(\alpha_{i_0}+\frac{\pi}{n+1})}{\sin\alpha_{i_0}}\\
&=\frac{\sin\alpha_{i_0}\cos(\frac{\pi}{n+1})-\cos\alpha_{i_0}\sin(\frac{\pi}{n+1})}{\sin\alpha_{i_0}}\\
&\qquad+\frac{\sin\alpha_{i_0}\cos(\frac{\pi}{n+1})+\cos\alpha_{i_0}\sin(\frac{\pi}{n+1})}{\sin\alpha_{i_0}}\\&=2\cos\Big(\frac{\pi}{n+1}\Big).
\end{align*}
\end{enumerate}
\end{proof}

\begin{remark}
A different proof of Lemma~\ref{l:lowerbound} can be given using the Chebyshev polynomials of the second kind \cite{MH}, which are defined by means of the recurrence relation:
$$
\left\{
\begin{array}{l}
U_0(x)=1
  \\ U_1(x)=2x\\
  U_{n}(x)=2xU_{n-1}(x)-U_{n-2}(x),\quad k=2,3,\dots\end{array}
\right.
$$
and observing that the roots of the equation $U_n(x)=0$, in increasing order, are precisely
$$
x_j^n=\cos\left(\frac{\pi(n-j+1)}{n+1}\right),\quad\text{for}\quad j=1,\dots,n.
$$
\end{remark}

\begin{thm}\label{thm:C^0_Ln}
For $n\geq2$ we have that $C^0_{L_n}=1+2\cos\Big(\frac{\pi}{n+1}\Big).$
\end{thm}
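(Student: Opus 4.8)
The plan is to establish the two inequalities $C^0_{L_n}\le 1+2\cos(\frac{\pi}{n+1})$ and $C^0_{L_n}\ge 1+2\cos(\frac{\pi}{n+1})$ separately, combining the lemmas proved above. The upper bound is immediate: by Lemma~\ref{l:regularpolygon}, the specific measure $\sigma(j)=\sin(\frac{j\pi}{n+1})$ is a doubling measure on $L_n$ with $C^0_\sigma=1+2\cos(\frac{\pi}{n+1})$, and since $C^0_{L_n}=\inf_\mu C^0_\mu$, we conclude $C^0_{L_n}\le 1+2\cos(\frac{\pi}{n+1})$.

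For the lower bound, I would argue that $C^0_\mu\ge 1+2\cos(\frac{\pi}{n+1})$ for every doubling measure $\mu$ on $L_n$. Fix such a $\mu$. If $C^0_\mu\ge 3$ there is nothing to prove, since $3\ge 1+2\cos(\frac{\pi}{n+1})$; so we may assume $C^0_\mu<3$. By Lemma~\ref{l:symmetric}, replacing $\mu$ by its symmetrization $\tilde\mu$ only decreases $C^0_\mu$, so it suffices to bound $C^0_{\tilde\mu}$ from below; hence we may assume $\mu$ is symmetric, i.e. $a_j=a_{n+1-j}$ where $a_j=\mu(j)$. Since $C^0_\mu<3$, Lemma~\ref{l:C<3} (together with Lemma~\ref{l:localmin}) guarantees $a_i<a_{i+1}$ for $1\le i<\lceil\frac{n}{2}\rceil$, so the sequence $(a_i)$ satisfies exactly the monotonicity and symmetry hypotheses of Lemma~\ref{l:lowerbound}. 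Setting $a_0=a_{n+1}=0$, that lemma yields
$$
C^0_\mu\ \ge\ \max_{1\le i\le n}\frac{a_{i-1}+a_i+a_{i+1}}{a_i}\ =\ 1+\max_{1\le i\le n}\frac{a_{i-1}+a_{i+1}}{a_i}\ \ge\ 1+2\cos\Big(\frac{\pi}{n+1}\Big).
$$
Taking the infimum over all doubling measures $\mu$ gives $C^0_{L_n}\ge 1+2\cos(\frac{\pi}{n+1})$, and combined with the upper bound this proves the theorem.

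The main point requiring care is the bookkeeping around the reduction to the symmetric, strictly increasing case: one must check that when we pass to $\tilde\mu$ the new weights are still positive (they are, since $a_j>0$) and that $C^0_{\tilde\mu}<3$ still holds (it does, since $C^0_{\tilde\mu}\le C^0_\mu<3$), so that Lemma~\ref{l:C<3} indeed applies to $\tilde\mu$. A minor subtlety is the boundary behavior at $i=1$ and $i=n$: in Lemma~\ref{l:lowerbound} the convention $a_0=a_{n+1}=0$ is used, and one should note that $\frac{\mu(B(i,1))}{\mu(i)}=\frac{a_{i-1}+a_i+a_{i+1}}{a_i}$ with this convention precisely recovers the quotients defining $C^0_\mu$ at the endpoints as well (where one of the neighboring terms is absent). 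Beyond these points the argument is a direct concatenation of the lemmas, with the real analytic content — the trigonometric estimate — already isolated in Lemma~\ref{l:lowerbound} and Lemma~\ref{l:regularpolygon}.
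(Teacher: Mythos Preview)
Your proof is correct and follows essentially the same approach as the paper: the upper bound via Lemma~\ref{l:regularpolygon}, then reduction to the symmetric strictly increasing case via Lemmas~\ref{l:symmetric}, \ref{l:localmin}, and \ref{l:C<3}, and finally the trigonometric lower bound from Lemma~\ref{l:lowerbound}. Your added remarks on the bookkeeping (positivity after symmetrization, preservation of $C^0_{\tilde\mu}<3$, and the endpoint convention $a_0=a_{n+1}=0$) are accurate and make the reduction slightly more explicit than in the paper.
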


\begin{proof}
By Lemma \ref{l:regularpolygon}, we know that 
$$
C^0_{L_n}=\inf_\mu C^0_{\mu}\leq 1+2\cos\Big(\frac{\pi}{n+1}\Big).
$$

For the converse inequality, since $C^0_{L_n}<3$, using Lemma \ref{l:localmin}  we can restrict our attention to measures $\mu$ with no local minimum. Moreover, by Lemma \ref{l:symmetric}, we can also assume that the measure is symmetric with respect to $\lceil\frac{n}{2}\rceil$, and by Lemma \ref{l:C<3} that $a_i<a_{i+1}$. In this case, Lemma \ref{l:lowerbound} yields the conclusion.
\end{proof}

For a finite graph $G$, $C_{G}^0$ is related to the spectral properties of $G$, as the following theorem shows (see also \cite{BH} for further results on this topic). Recall that the adjacency matrix of a graph $G$ is the matrix $A_G$ whose $(i,j)$ entry is $1$, if the edge $(i,j)\in E_G$, and $0$, otherwise.

\begin{thm}\cite[Theorem~5]{DST}\label{thm:doubling spectra}
For every finite graph $G$, it holds that
$$
C_G^0=1+\lambda_1(A_G),
$$
where $\lambda_1(A_G)$ denotes the largest eigenvalue of the adjacency matrix of $G$. Moreover, the unique minimizer for $C_G^0$, up to multiplicative constants,  is the measure $\mu$ given by the Perron eigenvector corresponding to $\lambda_1(A_G)$.
\end{thm}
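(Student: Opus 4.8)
The statement to prove is Theorem~\ref{thm:doubling spectra}: for every finite graph $G$, $C_G^0 = 1 + \lambda_1(A_G)$, with the unique minimizer (up to scaling) being the Perron eigenvector.

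My plan is to recast the quantity $C_\mu^0$ in terms of the adjacency matrix directly. Writing $\mu$ as a positive vector $x = (x_1,\dots,x_N)$ with $x_i = \mu(i)$, one has
$$
C_\mu^0 = \sup_{i\in V_G}\frac{\mu(B(i,1))}{\mu(i)} = \sup_i \frac{x_i + \sum_{j\sim i} x_j}{x_i} = 1 + \sup_i \frac{(A_G x)_i}{x_i},
$$
since $(A_G x)_i = \sum_{j\sim i} x_j$. Thus $C_G^0 = 1 + \inf_{x>0}\sup_i \frac{(A_G x)_i}{x_i}$, and the theorem reduces to the Collatz--Wielandt formula for the Perron--Frobenius eigenvalue: for a nonnegative matrix, $\lambda_1(A_G) = \inf_{x>0}\max_i \frac{(A_G x)_i}{x_i}$ (here $x>0$ means all entries strictly positive). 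Since $G$ is connected (we only deal with connected simple graphs), $A_G$ is irreducible, so Perron--Frobenius applies in its strong form: $\lambda_1$ is simple, the corresponding eigenvector $v$ can be taken strictly positive, and it is the unique positive eigenvector up to scaling.

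The key steps, in order, are: (1) establish the displayed identity $C_\mu^0 = 1 + \max_i (A_G x)_i / x_i$, which is immediate from the definition of $B(i,1)$ and of the adjacency matrix; (2) recall or prove the Collatz--Wielandt characterization: for the Perron eigenvector $v>0$ one has $(A_G v)_i / v_i = \lambda_1$ for every $i$, so the infimum is at most $\lambda_1$; conversely, for any $x>0$, if $\max_i (A_G x)_i/x_i = c$ then $A_G x \le c x$ componentwise, and pairing with the left Perron eigenvector $w>0$ of $A_G$ (which exists since $A_G^T$ is also irreducible) gives $\lambda_1 \langle w, x\rangle = \langle A_G^T w, x\rangle = \langle w, A_G x\rangle \le c\langle w, x\rangle$, hence $\lambda_1 \le c$; this shows the infimum equals $\lambda_1$ and is attained exactly at $x = v$ (up to scaling); (3) to pin down uniqueness of the minimizer, observe that if $x>0$ attains $\max_i (A_G x)_i/x_i = \lambda_1$, then $A_G x \le \lambda_1 x$ with $\langle w, A_G x - \lambda_1 x\rangle = 0$ and $w>0$, forcing $A_G x = \lambda_1 x$; by simplicity of $\lambda_1$, $x$ is a positive multiple of $v$.

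I do not expect a serious obstacle here — the content is essentially Perron--Frobenius/Collatz--Wielandt packaged for graphs, and the connectedness hypothesis supplies irreducibility. The only point requiring a little care is the equality case in step (2)/(3): one must use the strict positivity of the left eigenvector $w$ to conclude that the componentwise inequality $A_G x \le \lambda_1 x$ is actually an equality, and then invoke simplicity of the Perron eigenvalue (again from irreducibility) to get uniqueness up to scalars. Since the paper cites \cite{DST} for this theorem, I would keep the argument brief, stating the Collatz--Wielandt formula with a one-line justification and referencing a standard source for Perron--Frobenius if a self-contained proof is not desired.
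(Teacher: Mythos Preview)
Your argument is correct and is exactly the standard Collatz--Wielandt/Perron--Frobenius route. Note, however, that the paper does not actually prove this theorem: it is quoted from \cite{DST} without proof, so there is no ``paper's own proof'' to compare against. Your write-up would serve well as a self-contained justification; the only cosmetic point is that since $A_G$ is symmetric you need not distinguish left and right Perron eigenvectors in step~(2).
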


\section{$\mathbb Z$ as an infinite linear graph}\label{constz}

One can consider an asymptotic version of Theorem \ref{thm:C^0_Ln} as follows: Since 
$$
1+2\cos\Big(\frac{\pi}{n+1}\Big)\leq C_{L_n}\leq 3,
$$
taking $n\rightarrow \infty$ we have that 
$$
\lim_{n\rightarrow \infty} C_{L_n}=3.
$$
As, $\mathbb Z$ can be viewed as the limit of $L_n$, it is natural to expect that $C_{\mathbb Z}=3$. Actually, the computation of $C_{\mathbb Z}$ will be considerably simpler than (and independent of) that of $C_{L_n}$.
\medskip

For convenience, we need to recall the following well-known fact first.

\begin{lem}\label{infinitemeasure}
If $(X,d)$ is a metric space with infinite diameter, then every doubling measure $\mu$ on $X$ satisfies $\mu(X)=\infty$.
\end{lem}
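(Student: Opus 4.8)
The plan is to fix a point $x_0 \in X$ and use the infinite diameter to produce a sequence of points escaping to infinity, then exploit the doubling condition at successively larger scales to bound the total mass from below by a divergent sum. Concretely, since $\operatorname{diam}(X) = \infty$, for every $R > 0$ there exists a point $y \in X$ with $d(x_0, y) > R$; equivalently, the closed balls $B(x_0, R)$ form a strictly increasing (in the sense of not stabilizing) family whose union is $X$. So it suffices to show that $\mu(B(x_0, R)) \to \infty$ as $R \to \infty$.

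First I would record the elementary monotonicity consequence of \eqref{defdb}: if $\mu$ is doubling with constant $C_\mu$, then for any center $x$ and radius $r > 0$ one has $\mu(B(x, 2r)) \le C_\mu\, \mu(B(x, r))$, and more generally $\mu(B(x, 2^k r)) \le C_\mu^k\, \mu(B(x, r))$. I also need a lower bound going the other way, so the key observation is the reverse comparison: if $B(x_0, r)$ is a \emph{proper} subset of $X$, pick a point $z$ with $d(x_0, z) = \rho$ for some $\rho \in (r, 2r]$ (using that between scale $r$ and any larger achievable distance there must be points, by connectedness-type arguments — though in a general metric space one should instead argue directly). Then $x_0 \in B(z, \rho) \subset B(z, 2r)$ and $B(z, 2r) \subset B(x_0, 4r)$, while applying doubling at $z$ gives $\mu(B(z, 2r)) \le C_\mu^2\, \mu(B(z, r/2))$ — this line of reasoning needs care, and cleanly what one extracts is: whenever $B(x_0, 2r) \ne B(x_0, r)$ as measured sets are not enough; one genuinely needs a new point in the annulus.

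The cleanest route, and the one I would actually write, avoids annuli: since $X$ has infinite diameter there is a sequence $y_n$ with $d(x_0, y_n) \to \infty$; set $r_n = d(x_0, y_n)$. Then $x_0 \in B(y_n, r_n)$, hence $B(y_n, r_n) \subset B(x_0, 2r_n)$, and by the doubling inequality applied at $y_n$ over $\lceil \log_2 r_n \rceil$ steps we get $\mu(B(x_0, 2r_n)) \ge \mu(B(y_n, r_n)) \ge C_\mu^{-\lceil \log_2 r_n\rceil}\mu(B(y_n, 1)) \ge C_\mu^{-\lceil\log_2 r_n\rceil}\mu(\{y_n\})$, which unfortunately could shrink. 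So instead I will run the argument from $x_0$ alone: the honest statement is that doubling forbids the mass from being concentrated, and in fact the right fact to use is that $\mu(B(x_0, 2r)) - \mu(B(x_0, r)) = \mu(B(x_0,2r) \setminus B(x_0, r)) \ge 0$ and one shows this annulus mass is comparable to $\mu(B(x_0,r))$ whenever the annulus is nonempty; nonemptiness for a cofinal set of radii is exactly what infinite diameter gives (for each scale $2^k$ there is eventually a point at distance in $(2^k, 2^{k+1}]$). Summing the lower bounds $\mu(B(x_0, 2^{k+1})) \ge (1 + c)\mu(B(x_0, 2^k))$ along such a cofinal sequence of scales forces $\mu(B(x_0, 2^k)) \to \infty$, hence $\mu(X) = \infty$.

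The main obstacle is the purely metric bookkeeping: in a general metric space the annulus $B(x_0, 2r) \setminus B(x_0, r)$ can be empty even when $\operatorname{diam}(X) = \infty$ (imagine points only at distances $1$ and $100$ from $x_0$), so one cannot work with a fixed geometric sequence of radii. The fix is to choose the radii adaptively: given the current radius $r$, since the diameter is infinite there is a point $y$ with $d(x_0, y) > r$; let $r' = d(x_0, y)$, so the annulus $B(x_0, r') \setminus B(x_0, r/2)$ is nonempty (it contains $y$), and $B(y, r'/2) \subset B(x_0, 2r') \setminus$ (something), giving via doubling at $y$ a definite multiplicative gain $\mu(B(x_0, 2r')) \ge (1 + C_\mu^{-N})\mu(B(x_0, r))$ for a controlled number of steps $N$ depending on the ratio $r'/r$ — and here one must be slightly clever to keep $N$ bounded, e.g. by not insisting the radii grow geometrically but only that they grow, and bounding the number of doubling steps needed to pass from $B(x_0, r)$ to a ball around $y$ containing a fixed-radius ball. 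Iterating yields infinitely many multiplicative increments of size bounded below, so the mass diverges. I expect the write-up to be short once the adaptive choice of scales is set up correctly.
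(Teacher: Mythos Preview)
Your proposal is an exploration rather than a proof: you try three lines, abandon the first two, and leave the third unfinished. The gap in the final attempt is real. You correctly note that picking $y$ with $d(x_0,y)=r'>r$ yields a gain $\mu(B(x_0,2r'))\ge(1+C_\mu^{-N})\mu(B(x_0,r))$ with $N$ depending on $r'/r$; but since $r'/r$ is uncontrolled, the infinite product of factors $(1+C_\mu^{-N_k})$ may well converge, and your suggested remedy---``a ball around $y$ containing a fixed-radius ball''---does not help, because passing from $B(y,1)$ to $B(y,r')$ still costs $\sim\log_2 r'$ doublings.

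The missing observation is simpler than what you are reaching for. Since $\operatorname{diam}(X)=\infty$ you may always choose $y$ with $d(x_0,y)=r'>2r$. Then $B(y,r'/2)$ and $B(x_0,r)$ are disjoint, both sit inside $B(x_0,2r')$, and $B(x_0,r)\subset B(y,2r')$ gives $\mu(B(y,r'/2))\ge C_\mu^{-2}\mu(B(y,2r'))\ge C_\mu^{-2}\mu(B(x_0,r))$. Hence
\[
\mu\big(B(x_0,2r')\big)\ \ge\ \big(1+C_\mu^{-2}\big)\,\mu\big(B(x_0,r)\big)
\]
with $N=2$ uniformly, and iterating along $r_{k+1}=2r_k'$ forces $\mu(B(x_0,r_k))\to\infty$.

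The paper takes a shorter route by contradiction, avoiding iteration entirely. Assuming $\mu(X)<\infty$, for each $\varepsilon>0$ some ball $B(x,r)$ already carries mass at least $(1-\varepsilon)\mu(X)$. Choosing $y$ with $d(x,y)>3r$, the ball $B(y,d(x,y)-r)$ is disjoint from $B(x,r)$, hence has mass at most $\varepsilon\mu(X)$, yet its double $B(y,2(d(x,y)-r))$ contains $B(x,r)$. A single application of the doubling inequality then gives $(1-\varepsilon)\mu(X)\le C_\mu\,\varepsilon\,\mu(X)$, which is absurd for small $\varepsilon$. Your growth argument, once repaired, is a legitimate alternative; the paper's argument trades the inductive bookkeeping for one well-chosen pair of balls.
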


\begin{proof}
Assume on the contrary that $\mu(X)<\infty$. Fix any $x\in X$. Since $\mu(X)=\lim_{r\rightarrow\infty}\mu(B(x,r))$, for every $\varepsilon >0$, we can find $r>0$ such that
$$
\mu(B(x,r))\geq (1-\varepsilon)\mu(X).
$$
Take $y\in X$ such that $d(x,y)>3r$. In particular, since $B(y,d(x,y)-r)\cap B(x,r)=\emptyset$, it follows that 
$$
\mu(B(y,d(x,y)-r))\leq \varepsilon\mu(X).
$$
On the other hand, because of our choice of $y$, we also have that
$$
B(x,r)\subset B(y,2(d(x,y)-r)).
$$
Hence,
$$
(1-\varepsilon)\mu(X)\leq \mu(B(x,r))\leq C_\mu \mu(B(y,d(x,y)-r))\leq C_\mu \varepsilon\mu(X).
$$
Taking $\varepsilon\rightarrow0$ we reach a contradiction.
\end{proof}

\begin{thm}\label{const3}
$C_{\mathbb Z}=C^0_{\mathbb Z}=3$.
\end{thm}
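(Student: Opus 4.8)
The plan is to establish both equalities by a squeeze argument: $C^0_{\mathbb Z}\le C_{\mathbb Z}\le 3$ always holds (the second inequality from the counting measure, which has doubling constant $3$ on $\mathbb Z$), so it suffices to prove the lower bound $C^0_{\mathbb Z}\ge 3$, which forces $C^0_{\mathbb Z}=C_{\mathbb Z}=3$. Thus the whole content is: \emph{every} doubling measure $\mu$ on $\mathbb Z$ satisfies $\sup_{j\in\mathbb Z}\frac{\mu(B(j,1))}{\mu(j)}\ge 3$, i.e. $a_{j-1}+a_j+a_{j+1}\ge 3a_j$ for some $j$, writing $a_j=\mu(j)$.

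First I would argue by contradiction: suppose $C^0_\mu=1+\delta<3$ for some doubling measure $\mu$ on $\mathbb Z$, so that $a_{j-1}+a_{j+1}\le \delta\, a_j$ with $\delta<2$, for every $j\in\mathbb Z$. The key point is that this two-sided recurrence inequality with $\delta<2$ forces the sequence $(a_j)$ to grow at least geometrically in at least one of the two directions, which contradicts $\mu$ being a measure valued in $(0,\infty)$ on all of $\mathbb Z$ — or, more cleanly, contradicts Lemma~\ref{infinitemeasure} together with a comparison of ball growth. Concretely, the analogue of Lemma~\ref{l:localmin} shows that $(a_j)$ can have no local minimum, so it is monotone on each side of its infimum; combined with the strict inequality $a_{j-1}+a_{j+1}<2a_j$ one gets, as in Lemma~\ref{l:C<3} but now with no upper endpoint to stop the induction, that on the increasing side $a_{j+1}/a_j$ is bounded below by a constant $>1$, giving exponential growth $a_j\ge c\,\rho^j$ with $\rho>1$. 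Then for balls centered far out on that side, $\mu(B(j,2k+1))/\mu(B(j,k))$ is itself bounded below by roughly $\rho^{k+1}\to\infty$ as $k\to\infty$, contradicting $C_\mu<\infty$. (Alternatively: a measure with no local minimum on $\mathbb Z$ must be eventually monotone increasing in at least one direction, but then that tail has finite "left mass" relative to balls, and Lemma~\ref{infinitemeasure}'s argument applies.)

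The main obstacle I anticipate is handling the case where the sequence $(a_j)$ is \emph{not} eventually monotone — a priori the infimum of $(a_j)$ could fail to be attained, or $(a_j)$ could oscillate. The no-local-minimum observation rules out interior local minima, but one must still rule out, say, $a_j$ decreasing as $j\to+\infty$ \emph{and} decreasing as $j\to-\infty$ simultaneously (an "inverted" profile); here the recurrence $a_{j-1}+a_{j+1}\le\delta a_j$ with the values staying positive should be shown to be impossible by the same geometric-growth estimate applied to whichever tail is non-decreasing, or by directly noting $a_{j-1}+a_{j+1}<2a_j$ is incompatible with $(a_j)$ having two decreasing tails of positive terms. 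Once the dichotomy is reduced to "some tail grows geometrically," the contradiction with $C_\mu<\infty$ is immediate from \eqref{eq:C_G}. I would organize the proof so that the bulk of the work is a short lemma: if $b_j>0$ for all $j\in\mathbb Z$ and $b_{j-1}+b_{j+1}\le\delta b_j$ for all $j$ with $0<\delta<2$, then no such sequence exists — proved by setting $t_j=b_{j+1}/b_j$, observing $t_j+1/t_{j-1}\le\delta$, and deriving that $t_j$ must exit $[\,\delta-\sqrt{\delta^2-4}\,]$-type bounds... but since $\delta<2$ the relevant quadratic $t^2-\delta t+1$ has no real roots, so in fact $t_j+1/t_{j-1}\le\delta<2$ is impossible for positive $t_j$ by AM–GM-type reasoning, giving the contradiction directly and cleanly. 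This last observation is probably the slickest route and I would lead with it.
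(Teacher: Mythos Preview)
Your overall plan is sound, and the lemma you close with---no positive bi-infinite sequence $(b_j)_{j\in\mathbb Z}$ satisfies $b_{j-1}+b_{j+1}\le\delta b_j$ for all $j$ when $\delta<2$---is both true and a genuinely cleaner route than the paper's. It yields $C^0_\mu\ge3$ for \emph{every} positive measure on $\mathbb Z$, so the doubling hypothesis is needed only for the trivial upper bound via the counting measure. The paper instead keeps the doubling assumption in play throughout the lower bound: after reducing to a monotone or up-then-down profile (as you also do), it splits according to the limits $L_\pm=\lim_{n\to\pm\infty}a_n$ and, in the cases not dispatched directly by a $C^0_\mu\ge3$ estimate, invokes the ratio test together with an unbounded ball quotient $\mu(B(-N,2N))/\mu(B(-N,N))$ to contradict $C_\mu<\infty$. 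Your lemma short-circuits all of this case analysis.

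The gap is in your justification of the lemma. Writing $t_j=b_{j+1}/b_j$, the inequality becomes $t_j+1/t_{j-1}\le\delta$, but AM--GM gives $t+1/t\ge2$ only when the two occurrences are the \emph{same} variable; there is no lower bound on $t_j+1/t_{j-1}$ for distinct positive $t_j,t_{j-1}$, so the one-line contradiction you gesture at does not go through as stated, and the remark about $t^2-\delta t+1$ having no real roots is suggestive but not an argument. Two easy fixes: (i) sum the inequalities over $j=1,\dots,N$ and reindex to obtain
\[
t_N+\frac{1}{t_0}+\sum_{j=1}^{N-1}\Big(t_j+\frac{1}{t_j}\Big)\le N\delta,
\]
whence AM--GM on the middle sum gives $t_N+1/t_0\le N(\delta-2)+2\to-\infty$, a contradiction; or (ii) observe that $b_{j-1}+b_{j+1}<2b_j$ means the first differences $d_j=b_{j+1}-b_j$ are strictly decreasing, so on at least one side they are eventually bounded away from zero with a fixed sign, forcing $b_j\to-\infty$. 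Either of these replaces your ``AM--GM-type reasoning'' with an actual proof, after which your argument is complete and shorter than the paper's.
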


\begin{proof}
Let $\lambda$ denote the counting measure on $\mathbb Z$. We have that 
$$
C_{\lambda}=\sup\Big\{\frac{\lambda(B(n,2k+1))}{\lambda(B(n,k))}:n\in\mathbb Z,\,k\in\mathbb N\cup\{0\}\Big\}=\sup\Big\{\frac{4k+3}{2k+1}:k\in\mathbb N\cup\{0\}\Big\}=3.
$$
Hence, $C_{\mathbb Z}\leq 3$.  Let us now see that $C^0_{\mathbb Z}\ge3$. Let $\mu$ be any doubling measure on $\mathbb Z$. For $n\in\mathbb Z$, let $a_n=\mu(n)$. Note that we can assume that for every $n\in\mathbb Z$ 
\begin{equation}\label{localmin}
a_n>\min\{ a_{n-1},a_{n+1}\}.
\end{equation}
Indeed, otherwise for some $n\in\mathbb N$ we would have
$$
C_\mu^0\geq \frac{\mu(B(n,1))}{\mu(B(n,0))}=\frac{a_{n-1}+a_n+a_{n+1}}{a_n}\geq3.
$$

Now, condition \eqref{localmin} for every $n\in\mathbb Z$ implies that the sequence $(a_n)_{n\in\mathbb Z}$ is either monotone increasing, or monotone decreasing, or there is $n_0$ such that $(a_n)_{n\leq n_0}$ is increasing and $(a_n)_{n\geq n_0}$ is decreasing. Thus, we can consider the limits
$$
L_+=\lim_{n\rightarrow\infty} a_n\quad\text{and}\quad L_-=\lim_{n\rightarrow-\infty} a_n.
$$
Note that if $L_+$ or $L_-$ belong to $(0,\infty)$, then $C_\mu^0\geq 3$. Indeed, assume for instance $L_+\in (0,\infty)$, then for every $\varepsilon>0$ there is $N\in\mathbb N$ such that $|a_n-L_+|\leq \varepsilon$, for $n\geq N$. In particular,
$$
C_\mu^0\geq\frac{\mu(B(N+1,1))}{\mu(B(N+1,0))}=\frac{a_{N}+a_{N+1}+a_{N+2}}{a_{N+1}}\geq \frac{3(L_+-\varepsilon)}{L_++\varepsilon}\underset{\varepsilon\rightarrow0}\longrightarrow 3.
$$
Therefore, we can restrict our analysis to the following three cases:

\begin{enumerate}[leftmargin=*]
  \item[(i)] $(a_n)_{n\in\mathbb Z}$ is monotone increasing with $L_-=0$ and $L_+=\infty$, or
  \item[(ii)] $(a_n)_{n\in\mathbb Z}$ is monotone decreasing with $L_-=\infty$ and $L_+=0$, or
  \item[(iii)] there is $n_0$ such that $(a_n)_{n\leq n_0}$ is increasing, $(a_n)_{n\geq n_0}$ is decreasing, and $L_+=L_-=0$.
\end{enumerate}

Assume case (i) holds. Note that for every $n\in\mathbb N$, we have $0<\frac{a_{n-1}}{a_n}\leq1$. Suppose first that 
\begin{equation}\label{limsup}
\limsup_{n\rightarrow-\infty}\frac{a_{n-1}}{a_n}=1.
\end{equation} 
In this case, for every $\varepsilon>0$, there is $m\in\mathbb Z$ such that
$$
1-\varepsilon\leq \frac{a_{m-1}}{a_m}\leq1.
$$
Hence, we have
$$
C_\mu^0\geq\frac{\mu(B(m,1))}{\mu(B(m,0))}=\frac{a_{m-1}+a_{m}+a_{m+1}}{a_{m}}\geq 3-\varepsilon\underset{\varepsilon\rightarrow0}\longrightarrow 3.
$$

On the contrary, suppose now that \eqref{limsup} does not hold. In that case, by the classical ratio test it follows that the series $\sum_{n\leq0} a_n$ converges. Let $s=\sum_{n\leq0} a_n$. Since, by hypothesis, $L_+=\infty$, for every $M>0$,  there is $N\in\mathbb N$ such that $a_N\geq Ms$. Therefore, we have
$$
C_\mu\geq \frac{\mu(B(-N,2N))}{\mu(B(-N,N))}=\frac{\sum_{n=-3N}^{N}a_n}{\sum_{n=-2N}^{0}a_n}\geq \frac{a_N}{s}\geq M.
$$
This shows that $C_\mu=\infty$, which is a contradiction.

Case (ii), follows by symmetry. Finally, in case (iii), Lemma \ref{infinitemeasure} and the ratio test imply that either
$$
\limsup_{n\rightarrow\infty}\frac{a_{n+1}}{a_n}=1,
$$
or
$$
\limsup_{n\rightarrow-\infty}\frac{a_{n-1}}{a_n}=1,
$$
and we proceed as above to get $C_\mu^0\geq3$.
\end{proof}

A similar analysis can be made for $\mathbb N$:

\begin{thm}\label{const3:N}
$C_{\mathbb N}=C^0_{\mathbb N}=3$.
\end{thm}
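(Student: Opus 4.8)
The plan is to mirror the argument for $C_{\mathbb Z}$ almost verbatim, adjusting for the presence of the boundary vertex at $1$. As before, the counting measure $\lambda$ on $\mathbb N$ gives
$$
C_\lambda=\sup\Big\{\frac{\lambda(B(n,2k+1))}{\lambda(B(n,k))}:n\in\mathbb N,\,k\in\mathbb N\cup\{0\}\Big\}=3,
$$
since the worst ratio again occurs for interior balls as in $\mathbb Z$ (balls near the boundary have ratios no larger), so $C_{\mathbb N}\leq 3$. It remains to show $C^0_{\mathbb N}\geq 3$ for every doubling measure $\mu$ on $\mathbb N$; write $a_n=\mu(n)$.

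First I would reduce, exactly as in Theorem~\ref{const3}, to the case where no interior local minimum occurs: if $a_n\leq\min\{a_{n-1},a_{n+1}\}$ for some $n\geq 2$, then $C^0_\mu\geq\frac{a_{n-1}+a_n+a_{n+1}}{a_n}\geq 3$. Absence of interior local minima forces the sequence $(a_n)_{n\geq 1}$ to be either monotone decreasing, or monotone increasing, or first (weakly) decreasing and then increasing with a single valley — but the new subtlety is the endpoint: checking the ball at vertex $1$, $C^0_\mu\geq\frac{a_1+a_2}{a_1}=1+\frac{a_2}{a_1}$, so if $a_2\geq 2a_1$ we are done; hence we may assume $a_2<2a_1$, which in particular rules out the case where $(a_n)$ is increasing from the start unless the growth is slow. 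As in the $\mathbb Z$ proof, if $\lim_{n\to\infty}a_n=L_+\in(0,\infty)$ then looking at a ball around a far interior vertex gives $C^0_\mu\geq\frac{3(L_+-\eps)}{L_++\eps}\to 3$, so the remaining cases are: (i) $(a_n)$ monotone increasing with $L_+=\infty$; (ii) $(a_n)$ monotone decreasing with $L_+=0$; (iii) $(a_n)$ decreasing then increasing with $L_+=\infty$.

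In case (i), since there is no ``left side'' to sum, I would argue directly: by Lemma~\ref{infinitemeasure}, $\mu(\mathbb N)=\infty$, which is automatic here, so instead I use the boundary. If $\limsup_{n\to\infty}\frac{a_n}{a_{n+1}}=1$ is false, the ratio test gives $\sum_{n\geq 1}a_n<\infty$, contradicting $L_+=\infty$ (indeed $a_n\to\infty$ already contradicts convergence, so in fact case (i) with $L_+=\infty$ is impossible once we know the partial tail behaviour — more carefully, $L_+=\infty$ with $(a_n)$ increasing is flatly incompatible with $C_\mu<\infty$: take $B(1,N)$ versus $B(1,2N+1)$ and use that the sum is dominated by the last few terms, giving $C_\mu\geq a_{2N+1}/(a_N\cdot(\text{bounded}))\to\infty$ if growth is fast, while slow growth forces $\limsup a_n/a_{n+1}=1$ and hence $C^0_\mu\geq 3$ via the interior-ball estimate). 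Case (ii) is the genuinely new one and the main obstacle: a decreasing summable sequence like $a_n=2^{-n}$ has $\mu(\mathbb N)<\infty$, which is allowed on $\mathbb N$ since $\operatorname{diam}(\mathbb N)=\infty$ — wait, Lemma~\ref{infinitemeasure} says it is \emph{not} allowed, so $\sum a_n=\infty$, which by the ratio test forces $\limsup_{n\to\infty}\frac{a_{n+1}}{a_n}=1$; then picking $m$ with $1-\eps\leq\frac{a_{m+1}}{a_m}\leq 1$ and also (using monotonicity) $\frac{a_{m-1}}{a_m}\geq 1$ gives $C^0_\mu\geq\frac{a_{m-1}+a_m+a_{m+1}}{a_m}\geq 1+1+(1-\eps)\to 3$, taking $m\geq 2$ so the ball is interior. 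Case (iii) combines the above: the increasing-to-$\infty$ branch is impossible as in case (i), so really only the decreasing branch survives and is handled as in case (ii). Assembling these, $C^0_\mu\geq 3$ for all doubling $\mu$, hence $3\leq C^0_{\mathbb N}\leq C_{\mathbb N}\leq 3$, giving equality throughout.
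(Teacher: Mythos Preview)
There is a genuine error in your structural case analysis. You write that absence of interior local minima forces $(a_n)_{n\geq1}$ to be ``monotone decreasing, or monotone increasing, or first (weakly) decreasing and then increasing with a single valley'', but a valley \emph{is} an interior local minimum, so this is backwards: the correct trichotomy is monotone increasing, monotone decreasing, or increasing-then-decreasing (a single peak). Consequently your case~(iii) as stated---decreasing then increasing with $L_+=\infty$---is vacuous (it contradicts the no-local-minimum hypothesis), while the genuine third case, increasing-then-decreasing with $L_+=0$, is never addressed. That case is easily handled by applying your case~(ii) argument on the decreasing tail, so the repair is small, but as written the proof is incomplete. Separately, your treatment of case~(i) opens with a misapplication of the ratio test ($\limsup a_n/a_{n+1}<1$ gives geometric \emph{growth} of $a_n$, hence divergence of $\sum a_n$, not convergence); the parenthetical correction that follows---fast growth forces $\mu(B(1,2N+1))/\mu(B(1,N))\to\infty$, while slow growth gives $\limsup a_n/a_{n+1}=1$ and $C^0_\mu\geq3$ at an interior vertex---is the right dichotomy and should simply replace the false start.

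For comparison, the paper avoids this case analysis entirely: for the lower bound it restricts any doubling $\mu$ on $\mathbb N$ to $L_n$, notes that $C^0_{\mu|_{L_n}}\leq C^0_\mu$ (the only ratio that changes, at the right leaf $n$, strictly decreases), and invokes Theorem~\ref{thm:C^0_Ln} to get $1+2\cos(\pi/(n+1))\leq C^0_\mu$; letting $n\to\infty$ finishes. Your route, once corrected, would be more self-contained (it does not rely on the $L_n$ computation), whereas the paper's is shorter by reusing work already done.
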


\begin{proof}
Let $\lambda$ denote the counting measure on $\mathbb N$. Note that for $j\in\mathbb N$, $k\geq0$ we have that 
$$
\lambda(B(j,k))=
\left\{
\begin{array}{ccc}
 2k+1, &   & \text{ if }k\leq j-1,  \\
 j+k ,&   &   \text{ if }k> j-1.
\end{array}
\right.
$$
Hence, we get that
\begin{align*}
C_{\lambda}&=\sup\Big\{\frac{\lambda(B(j,2k+1))}{\lambda(B(j,k))}:j\in\mathbb N,\,k\in\mathbb N\cup\{0\}\Big\}\\
&=\max\bigg\{\sup\Big\{\frac{4k+3}{2k+1}:0\leq k\leq \frac{j-2}{2}\Big\}, \sup\Big\{\frac{j+2k+1}{2k+1}:\frac{j-2}{2}<k\leq j-1\Big\},\\
&\quad \quad \quad \quad \sup\Big\{\frac{j+2k+1}{j+k}:j-1<k\Big\}\bigg\}=3.
\end{align*}
Thus, $C_{\mathbb N}\leq 3$.  Let us now see that $C^0_{\mathbb N}\ge3$. Let $\mu$ be any doubling measure on $\N$. For $n\in\mathbb N$, let $\mu_n$ be the restriction of $\mu$ to $L_n$. Then, by Theorem~\ref{thm:C^0_Ln}, 
$$
1+2\cos\bigg(\frac{\pi}{n+1}\bigg)=C^0_{L_n} \le C^0_{\mu_n} \le C^0_\mu.
$$
The last inequality follows trivially since the only quotient for $\mu_n$ to calculate  $C^0_{\mu_n} $, different for the measure $\mu$ in $\N$, is the one given at the leave $n$, on which we have
$$
\frac{\mu_n(n-1)+\mu_n(n)}{\mu_n(n)}<\frac{\mu(n-1)+\mu(n)+\mu(n+1)}{\mu(n)}.
$$
Hence, 
$$
\lim_{n\to\infty}C^0_{L_n} =3\le C^0_\mu\quad \Longrightarrow\quad 3\le C^0_\N.
$$
\end{proof}

In the remaining of this section we will see that the counting measure is the only doubling minimizer on $\mathbb Z$ (up to multiplicative constant). To this end we need first a couple of lemmas:

\begin{lem}\label{l:valle}
Let $\mu$ be a doubling measure on $\mathbb Z$ and set $a_j=\mu(j)$, for $j\in\mathbb Z$. If there exist $j_1<j_2<j_3$ in $\mathbb Z$ such that $a_{j_2}<\min\{a_{j_1},a_{j_3}\}$, then $C_\mu^0>3$.
\end{lem}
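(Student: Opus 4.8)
The plan is to exploit the "valley" hypothesis $a_{j_2}<\min\{a_{j_1},a_{j_3}\}$ together with the monotonicity dichotomy already established in the proof of Theorem~\ref{const3}: a doubling measure on $\mathbb Z$ with $C_\mu^0\le 3$ must, after discarding the easy case, have $(a_n)$ monotone increasing, monotone decreasing, or unimodal (increasing then decreasing). First I would argue by contradiction: assume $C_\mu^0\le 3$. If $(a_n)$ had a strict local minimum at some $1<i$ in the sense $a_i\le\min\{a_{i-1},a_{i+1}\}$, then testing the ball $B(i,1)$ already gives $C_\mu^0\ge (a_{i-1}+a_i+a_{i+1})/a_i\ge 3$, and in fact one checks this inequality is strict unless $a_{i-1}=a_i=a_{i+1}$; so if the inequality is an equality we are in a locally constant regime. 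The existence of $j_1<j_2<j_3$ with $a_{j_2}<\min\{a_{j_1},a_{j_3}\}$ forces the sequence to go strictly down and then strictly up somewhere in $[j_1,j_3]$, which is incompatible with being monotone or unimodal, and incompatible with the locally-constant escape route. More carefully: take $j_2'$ to be a point in $\{j_1,\dots,j_3\}$ where $a$ attains its minimum over that block; then $a_{j_2'}<a_{j_1}$ and $a_{j_2'}<a_{j_3}$ (since $a_{j_2}$ is strictly below both endpoints), so $j_1<j_2'<j_3$ and $a_{j_2'}\le\min\{a_{j_2'-1},a_{j_2'+1}\}$, i.e.\ we have a genuine local minimum.

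Next I would upgrade $C_\mu^0\ge 3$ to the strict inequality $C_\mu^0>3$. At the local minimum $j_2'$ we have $C_\mu^0\ge (a_{j_2'-1}+a_{j_2'}+a_{j_2'+1})/a_{j_2'}$; this is $>3$ unless $a_{j_2'-1}=a_{j_2'}=a_{j_2'+1}$. So the only way to avoid the conclusion is to have equality, meaning the three consecutive values around every block-minimum are equal. But then consider the vertex $j_2'-1$ (or $j_2'+1$): it too now satisfies $a_{j_2'-1}\le\min\{a_{j_2'-2},a_{j_2'}\}$ if $a_{j_2'-2}\ge a_{j_2'-1}$, i.e.\ it is again a local minimum, so its neighbours are forced equal as well, and by induction the sequence is constant on an interval extending in both directions. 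Propagating this argument one shows $(a_n)$ would be eventually constant at a value $L_+\in(0,\infty)$ or $L_-\in(0,\infty)$, which by the computation in the proof of Theorem~\ref{const3} already yields $C_\mu^0\ge 3$; but more simply, a constant block of length $\ge 3$ gives $C_\mu^0\ge 3$ with equality only in the fully constant case, and the fully constant sequence does not satisfy the valley hypothesis. Hence the equality case cannot occur, and $C_\mu^0>3$.

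I expect the main obstacle to be making the "propagation of the equality case" rigorous without circularity: one must be careful that the induction genuinely terminates in a contradiction with the valley hypothesis rather than merely recovering $C_\mu^0\ge 3$. The cleanest route is probably to avoid the induction entirely and instead pick $j_2'$ to be a point of \emph{strict} local minimum — which exists because $a_{j_1},a_{j_3}$ strictly exceed $\min_{[j_1,j_3]}a$, so among the minimizing indices the leftmost one $j^\star$ satisfies $a_{j^\star-1}>a_{j^\star}$ (strictly, since $j^\star-1$ is not a minimizer or lies outside $[j_1,j_3]$ where values are larger) and $a_{j^\star+1}\ge a_{j^\star}$. Then $a_{j^\star-1}+a_{j^\star}+a_{j^\star+1}>a_{j^\star}+a_{j^\star}+a_{j^\star}=3a_{j^\star}$ with the first inequality strict, giving directly
$$
C_\mu^0\ge\frac{a_{j^\star-1}+a_{j^\star}+a_{j^\star+1}}{a_{j^\star}}>3.
$$
This sidesteps the dichotomy and the equality analysis altogether; the only thing to verify is the elementary combinatorial claim that the leftmost minimizer of $a$ over $\{j_1,\dots,j_3\}$ has a strictly larger left neighbour, which follows since that neighbour is either $a_{j_1}$ (strictly larger by hypothesis) or, if interior, not a minimizer and to the left of the leftmost minimizer hence again strictly larger, or, if $j^\star=j_1$ is impossible because $a_{j_1}>a_{j_2}\ge\min$, so $j^\star$ is interior.
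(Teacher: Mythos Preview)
Your final ``cleanest route'' is correct and is essentially the paper's own argument: the paper picks a minimizer $j_0$ of $a$ over $[j_1,j_3]$, locates the nearest strictly larger neighbour via $j_0^\pm$, and evaluates the ratio $\mu(B(\cdot,1))/\mu(B(\cdot,0))$ at $j_0^+-1$ (or $j_0^-+1$), which is exactly your leftmost-minimizer computation up to relabelling. The earlier detours in your write-up (the monotonicity dichotomy from Theorem~\ref{const3} and the ``propagation of equality'' induction) are unnecessary and should be dropped, but the core argument is the same as the paper's.
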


\begin{proof}
Let $j_0\in[j_1,j_3]$ be such that $a_{j_0}=\min_{j\in[j_1,j_3]} a_j$. Let 
$$
j_0^+=\min\{j\geq j_0:a_j> a_{j_0}\} \quad\quad\text{and}\quad\quad j_0^-=\max\{j\leq j_0:a_j> a_{j_0}\}.
$$
Note that $j_0<j_0^+\leq j_3$ and $j_1\leq j_0^-<j_0$. Now it is easy to check that
$$
C_\mu^0\geq\max\Big\{\frac{\mu(B(j_0^+-1,1))}{\mu(B(j_0^+-1,0))},\frac{\mu(B(j_0^-+1,1))}{\mu(B(j_0^-+1,0))}\Big\}>3.
$$
\end{proof}

\begin{lem}\label{l:strictlymonotone}
Let $\mu$ be a doubling measure on $\mathbb Z$ with $C_\mu^0=3$. If there is $j_0\in\mathbb Z$ such that $a_{j_0}<a_{j_0+1}$, then $a_j<a_{j+1}$ for every $j\leq j_0$.
\end{lem}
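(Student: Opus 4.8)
The plan is to argue by contradiction. Suppose $C_\mu^0=3$ and $a_{j_0}<a_{j_0+1}$ for some $j_0$, but the conclusion fails; then there is some index $j\le j_0$ with $a_j\ge a_{j+1}$. I would first reduce to the case of two consecutive indices where monotonicity breaks: since $a_{j_0}<a_{j_0+1}$ holds, by moving leftward from $j_0$ there must be a largest index $k\le j_0$ with $a_k\ge a_{k+1}$, so that $a_m<a_{m+1}$ for all $k\le m\le j_0$. The point $k+1$ is then a place where the sequence stops decreasing (at $k$) and starts increasing, i.e. $a_{k-1}$ or earlier compared to $a_k\ge a_{k+1}<a_{k+2}$.

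The key step is to locate a genuine valley and invoke Lemma~\ref{l:valle}. From $a_k\ge a_{k+1}$ and $a_{k+1}<a_{k+2}$ we already have $a_{k+1}<\min\{a_k,a_{k+2}\}$ provided the first inequality is strict; if instead $a_k=a_{k+1}$, then $C_\mu^0\ge \frac{a_{k-1}+a_k+a_{k+1}}{a_{k+1}}\ge \frac{a_k+a_k+a_{k+1}}{a_{k+1}}$, and I'd need $a_{k-1}\ge a_k$ or at least enough to push this to $>3$; more carefully, if $a_k=a_{k+1}$ then looking at the ball around $k+1$ gives $\frac{a_k+a_{k+1}+a_{k+2}}{a_{k+1}}=1+1+\frac{a_{k+2}}{a_{k+1}}>3$ since $a_{k+2}>a_{k+1}$, already a contradiction with $C_\mu^0=3$. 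So we may assume $a_k>a_{k+1}$, and then with $j_1=k$, $j_2=k+1$, $j_3=k+2$ we get $a_{j_2}<\min\{a_{j_1},a_{j_3}\}$, so Lemma~\ref{l:valle} yields $C_\mu^0>3$, contradicting our hypothesis.

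The main obstacle, such as it is, is organizing the case analysis cleanly: the failure of monotonicity to the left of $j_0$ could a priori happen in several ways (equality $a_k=a_{k+1}$ versus strict decrease, and the boundary behaviour as one walks left), and one must make sure that in every case either a strict valley appears — triggering Lemma~\ref{l:valle} — or a direct three-term ball estimate already forces $C_\mu^0>3$. Once the right ``first place monotonicity breaks'' is identified, each subcase is a one-line estimate using either Lemma~\ref{l:valle} or the elementary inequality $\frac{a_{i-1}+a_i+a_{i+1}}{a_i}>3$ whenever $a_i\le\min\{a_{i-1},a_{i+1}\}$ with at least one inequality strict. I would present it by defining $k=\max\{j\le j_0: a_j\ge a_{j+1}\}$ (noting this set, if nonempty, has a maximum since it is bounded above by $j_0$), deriving the contradiction, and concluding that the set is empty, which is exactly the claim $a_j<a_{j+1}$ for all $j\le j_0$.
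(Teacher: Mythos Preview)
Your argument is correct and follows essentially the same idea as the paper's: at the first point (moving left from $j_0$) where strict increase fails you get $a_k\ge a_{k+1}<a_{k+2}$, and the radius-$1$ ball at $k+1$ then forces $C_\mu^0>3$. The paper organizes this as a one-step induction (from $a_{j_0}<a_{j_0+1}$ deduce $a_{j_0-1}<a_{j_0}$) and avoids both the case split and the appeal to Lemma~\ref{l:valle} by rewriting $C_\mu^0\le 3$ at $j_0$ as the averaging inequality $\tfrac{a_{j_0-1}+a_{j_0+1}}{2}\le a_{j_0}$, which immediately contradicts $a_{j_0-1}\ge a_{j_0}$ together with $a_{j_0+1}>a_{j_0}$; the content, however, is the same.
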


\begin{proof}
Assuming the hypothesis we will see that $a_{j_0-1}<a_{j_0}$ and the result would follow by induction. Suppose otherwise that 
\begin{equation}\label{eq:aj0-1}
a_{j_0-1}\geq a_{j_0}.
\end{equation}
Since $C_\mu=3$, in particular we have
$$
\frac{a_{j_0-1}+a_{j_0}+a_{j_0+1}}{a_{j_0}}=\frac{\mu(B(j_0,1))}{\mu(B(j_0,0))}\leq C_\mu^0=3,
$$
which can be rewritten as
\begin{equation}\label{eq:prom}
\frac{a_{j_0-1}+a_{j_0+1}}{2}\leq a_{j_0}.
\end{equation}
Using the hypothesis, together with \eqref{eq:aj0-1} and \eqref{eq:prom}, we would have
$$
a_{j_0}<\frac{a_{j_0-1}+a_{j_0+1}}{2}\leq a_{j_0},
$$
which is a contradiction.
\end{proof}

\begin{thm}\label{t:minimizersZ}
A measure $\mu$ on $\mathbb Z$ is a doubling minimizer if and only if it is constant.
\end{thm}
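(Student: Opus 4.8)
The ``if'' direction is immediate: the counting measure $\lambda$ has $C_\lambda=3=C_{\mathbb Z}$ by Theorem~\ref{const3}, and scalar multiples of $\lambda$ have the same doubling constant, so every constant measure lies in $DM(\mathbb Z)$. The content is the ``only if'' direction, and my plan is to assume $\mu$ is a doubling minimizer, so $C_\mu=C_\mu^0=3$, and derive that $(a_j)_{j\in\mathbb Z}$ is constant by ruling out any strict inequality between consecutive terms.

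First I would record the structural consequences of $C_\mu^0=3$ already available: by Lemma~\ref{l:valle} the sequence $(a_j)$ has no strict interior local minimum, so it is monotone increasing, monotone decreasing, or unimodal (increasing then decreasing); and since $\mathbb Z$ has infinite diameter, Lemma~\ref{infinitemeasure} forbids a summable tail, which (exactly as in the proof of Theorem~\ref{const3}) rules out the genuinely monotone cases and forces $\limsup_{n\to\infty} a_{n+1}/a_n=1$ on any decreasing tail and likewise $\limsup_{n\to-\infty} a_{n-1}/a_n=1$ on any increasing tail. The key new input is Lemma~\ref{l:strictlymonotone}: if $a_{j_0}<a_{j_0+1}$ for some $j_0$, then $(a_j)$ is strictly increasing on $(-\infty,j_0]$, hence $a_j\to L_-\in[0,\infty)$ as $j\to-\infty$ with $L_-<a_{j_0}$; but then for large $N$ the ratio $a_{-N-1}/a_{-N}$ is bounded away from $1$ (since the increments $a_{-N}-a_{-N-1}$ would have to sum to the finite quantity $a_{j_0}-L_-$, forcing $a_{-N-1}/a_{-N}\to1$ only if $L_->0$, and if $L_->0$ then the computation in Theorem~\ref{const3} gives $C_\mu^0\ge 3(L_--\eps)/(L_-+\eps)<3+\text{something}$ — wait, I need the other side). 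Let me reorganize: the clean argument is that $a_{j_0}<a_{j_0+1}$ combined with strict monotonicity on $(-\infty,j_0]$ produces, by the same ratio-test dichotomy as in Theorem~\ref{const3}, either a point where $a_{m-1}/a_m$ is close to $1$ (giving $C_\mu^0>3$ after a short computation, contradicting $C_\mu^0=3$ — one must be careful that ``close to $1$'' yields strict inequality, which it does once we also use $a_{j_0}<a_{j_0+1}$ strictly at a fixed location) or a summable tail $\sum_{n\le 0}a_n<\infty$ (contradicting Lemma~\ref{infinitemeasure} applied after noting the tail on the other side cannot compensate in the unimodal case, or directly contradicting $C_\mu<\infty$ via the ball-ratio estimate $\mu(B(-N,2N))/\mu(B(-N,N))\ge a_N/s$ in the increasing case).

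So the step-by-step structure is: (1) observe $C_\mu=C_\mu^0=3$; (2) by Lemmas~\ref{l:valle} and \ref{infinitemeasure} classify $(a_j)$ as increasing, decreasing, or unimodal with both tails tending to $0$; (3) suppose for contradiction that some consecutive pair is unequal, say $a_{j_0}<a_{j_0+1}$ (the reverse inequality being symmetric); (4) apply Lemma~\ref{l:strictlymonotone} to get strict monotone increase on $(-\infty,j_0]$; (5) run the ratio-test dichotomy of Theorem~\ref{const3}: if $\limsup_{n\to-\infty}a_{n-1}/a_n=1$, pick $m$ with $a_{m-1}/a_m$ very close to $1$ and use that $a_{m}<a_{m+1}$ strictly (monotonicity is strict) to get $(a_{m-1}+a_m+a_{m+1})/a_m>3$, contradicting $C_\mu^0=3$; if instead that $\limsup$ is $<1$, the series $\sum_{n\le 0}a_n$ converges and the ball-ratio estimate forces $C_\mu=\infty$, a contradiction; (6) conclude no such $j_0$ exists, hence $a_j=a_{j+1}$ for all $j$, i.e.\ $\mu$ is constant.

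\textbf{Main obstacle.} The delicate point is step~(5), making the ``$a_{m-1}/a_m$ close to $1$'' case yield a \emph{strict} violation of $C_\mu^0=3$ rather than just $\ge 3-\eps$. The resolution is that we are not merely near some monotone sequence: we have a fixed location $j_0$ with a \emph{strict} jump $a_{j_0}<a_{j_0+1}$, and strict monotonicity on $(-\infty,j_0]$ means every consecutive ratio there is strictly less than $1$; combined with $C_\mu^0=3$ this pins down each local average $(a_{j-1}+a_{j+1})/2\le a_j$, and chaining these relations with the strict jump produces a genuine contradiction (essentially the argument of Lemma~\ref{l:strictlymonotone} propagated: the strict jump cannot coexist with the averaging inequalities and the convergence $a_j\to L_-$). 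I expect the cleanest writeup actually bypasses the $\limsup$ dichotomy entirely and argues directly: strict increase on $(-\infty,j_0]$ forces $a_j\to L_-\ge 0$; if $L_->0$ the Theorem~\ref{const3} computation at a far-left point gives $C_\mu^0>3$; if $L_-=0$ then $\sum_{n\le j_0}a_n$ could still diverge, but the averaging inequality $2a_j\ge a_{j-1}+a_{j+1}$ valid for all $j$ (from $C_\mu^0=3$) says $(a_j)$ is concave, and a concave positive sequence on $(-\infty,j_0]$ that is increasing and tends to $0$ at $-\infty$ must be eventually constant — contradicting strict monotonicity. This concavity observation is the crux and makes the whole argument short.
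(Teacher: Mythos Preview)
Your concavity observation is the right idea and yields a proof considerably simpler than the paper's. The paper never invokes concavity; instead, assuming $\mu$ is a non-constant minimizer, it manufactures auxiliary measures $\mu_n(j)=\mu(n+j)+\mu(-j)$ and $\tilde\mu(j)=a_j+a_{2N+j}$ from translations and shifts (which also have doubling constant $3$ by Lemma~\ref{l:holder}) and engineers a strict interior valley in one of them, contradicting Lemma~\ref{l:valle}. Your route is more direct: $C_\mu^0\le 3$ is \emph{exactly} the statement that the differences $d_j=a_{j+1}-a_j$ are non-increasing in $j$, so a single strict jump $d_{j_0}>0$ forces $d_j\ge d_{j_0}>0$ for all $j\le j_0$, whence $a_j\le a_{j_0}-(j_0-j)\,d_{j_0}\to-\infty$, contradicting positivity. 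This bypasses the paper's symmetry tricks entirely.

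Two corrections to your writeup. First, your treatment of the case $L_->0$ is wrong: the Theorem~\ref{const3} computation only yields $C_\mu^0\ge 3$, never strict inequality, and indeed concavity forces $(a_{j-1}+a_j+a_{j+1})/a_j\le 3$ for every $j$, so no local ratio will ever exceed $3$. Second, the phrase ``must be eventually constant'' is a misstatement --- the correct conclusion is that no such sequence exists at all. Both issues evaporate once you notice that the concavity argument is insensitive to the value of $L_-$: it needs only $d_{j_0}>0$, concavity, and positivity of every $a_j$. So the case split on $L_-$, the ratio-test dichotomy from step~(5), and even Lemma~\ref{l:strictlymonotone} become superfluous; the entire proof is: if $\mu$ is non-constant, some $d_{j_0}\ne 0$; by reflection symmetry assume $d_{j_0}>0$; concavity gives $d_j\ge d_{j_0}$ for $j\le j_0$; hence $a_j\to-\infty$ as $j\to-\infty$, a contradiction.
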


\begin{proof}
Let us denote $a_j=\mu(j)$ for $j\in\mathbb Z$ and suppose $\mu$ is not constant. In that case, there exists $j_0\in\mathbb Z$ such that $a_{j_0}\neq a_{j_0+1}$. Let us assume that $a_{j_0}<a_{j_0+1}$, as the case with the opposite inequality would follow by symmetry. By Lemmas \ref{l:valle} and \ref{l:strictlymonotone}, we have to distinguish to possibilities:
\begin{enumerate}[leftmargin=*]
\item[(i)] either $a_j\leq a_{j+1}$ for every $j\in\mathbb Z$; or
\item[(ii)] there exist $j_0\leq j_1< j_2$ such that $a_j<a_{j+1}$ for $j\leq j_1$, $a_j=a_{j_2}$ for $j_1<j\leq j_2$ and $a_j>a_{j+1}$ for $j\geq j_2$.
\end{enumerate}

First, let us suppose case (i) holds. For every $n\in\mathbb Z$ let $\mu_n$ be the measure given, for $j\in\mathbb Z$, by
$$
\mu_n(j)=\mu(n+j)+\mu(-j).
$$
As a direct consequence of Lemma \ref{l:holder}, it follows that $C_{\mu_n}=3$ for every $n\in\mathbb Z$. Note that if $\lim_{j\rightarrow\infty} a_j=\infty$, then $\lim_{|j|\rightarrow\infty}\mu_0(j)=\infty$; thus, if we set $i_0$ such that $\mu_0(i_0)=\min_{j\in\mathbb Z}\mu_0(j)$, then for $N>|i_0|$ large enough we have in particular that $\mu_0(N)=\mu_0(-N)>\mu_0(i_0)$, and by Lemma \ref{l:valle} we would have that $C_{\mu_0}^0>3$, which is a contradiction. Hence, we can assume that $\lim_{j\rightarrow\infty} a_j=\sup_{j\in\mathbb Z} a_j=M\in\mathbb R_+$. Let also $m=\lim_{j\rightarrow-\infty}a_j=\inf_{j\in\mathbb Z}a_j$, which satisfies $m\geq0$.

We claim that in this case, for some $n\in\mathbb Z$ we must have that the measure $\mu_n$ is not a multiple of the counting measure. Indeed, if this were not the case, then for every $n\in\mathbb Z$ there would be $t_n$ such that for every $j\in \mathbb Z$
$$
a_{n+j}+a_{-j}=t_n.
$$
In particular, taking $j\rightarrow \infty$, we get that for every $n\in\mathbb Z$
$$
t_n=\lim_{j\rightarrow\infty} (a_{n+j}+a_{-j})=M+m.
$$
Therefore, for every $n,j\in\mathbb Z$ we have that $a_{n+j}-a_{-j}=M+m$, which implies in particular that 
$$a_n=M+m-a_0$$
for every $n\in\mathbb Z$. This is a contradiction with the assumption that $\mu$ was not constant.

Therefore, we can take some $n\in\mathbb Z$ such that $\mu_n$ is not constant. In that case, Lemma \ref{l:valle} implies that $\mu_n$ must have the form as in (ii) above and we would proceed as below.

Suppose now that we have a non-constant measure $\mu$ with $C_\mu=3$ satisfying condition (ii). For simplicity, and without loss of generality using a translation we can assume that $a_0=\sup_{j\in\mathbb Z} a_j$. As before let $M=a_0$ and $m=\lim_{|j|\rightarrow \infty} a_j$. Let $0<\varepsilon<\frac{M-m}{3}$. Let $N\in\mathbb N$ large enough so that for every $|j|\geq N$ we have $|a_j-m|<\varepsilon$. Let us consider a new measure $\tilde \mu$ obtained by combining the original one and a translation: For $j\in\mathbb Z$ let
$$
\tilde\mu(j)=a_j+a_{2N+j}.
$$
It follows by Lemma \ref{l:holder} that $C_{\tilde\mu}=3$. However, we have that
\begin{align*}
\tilde\mu(0)&=a_0+a_{2N}=M+a_{2N}>M+m-\varepsilon,\\
\tilde\mu(-2N)&=a_{-2N}+a_{0}=M+a_{-2N}>M+m-\varepsilon,\\
\tilde\mu(-N)&=a_{-N}+a_N<2m+2\varepsilon.
\end{align*}
By our choice of $\varepsilon$ we have that $\tilde\mu(-N)<\min\{\tilde\mu(-2N),\tilde\mu(0)\}$, which is a contradiction with Lemma \ref{l:valle} and the fact that $C_{\tilde\mu}=3$.
\end{proof}

\begin{remark}
It is noteworthy to mention that the symmetries of $\mathbb Z$ (translations and reflections) are implicitly playing a role in the above proof. We also refer the interested reader to \cite{DST} for further examples of graphs for which doubling constants can be computed, and in particular several instances where the automorphism group of the graph can be exploited.
\end{remark}

It is somehow curious that $DM(\mathbb N)$ does not reduce to the ray generated by the counting measure $\lambda$:

\begin{example}\label{examplenonunique}
For $\alpha\in[\frac12,1]$, if $\lambda_\alpha$ denotes the measure on $\mathbb N$ given by
$$
\lambda_\alpha(j)=
\left\{
\begin{array}{ccc}
\alpha, &   & \text{ if }j=1,  \\
1, &   &  \text{ if }j> 1,
\end{array}
\right.
$$
then $C_{\lambda_\alpha}=3$.
Indeed, by Theorem~\ref{const3:N}, $3=C_\N\le C_{\lambda_\alpha}$ and, for $j\in\mathbb N$, if $k<j-1$ we have that
$$
\frac{\lambda_\alpha(B(j,2k+1))}{\lambda_\alpha(B(j,k))}\leq \frac{\lambda(B(j,2k+1))}{\lambda(B(j,k))}\leq 3;
$$
whereas for $k\geq j-1$ we have
$$
\frac{\lambda_\alpha(B(j,2k+1))}{\lambda_\alpha(B(j,k))}= \frac{\alpha+j+2k}{\alpha+j+k-1}\leq 2+\frac{1}{2k+1}\leq 3.
$$
\end{example}

\section{Estimates for $C_{L_n}$}\label{secc5}

The purpose of this section is to show that the measures on $L_n$ we are interested for computing the infimum $C_{L_n}$ have some nice properties. In particular, it is not necessary to compute all possible quotients of measures of balls appearing in the definition of $C_\mu$. This will be useful later on to show that, actually, $C_{L_n}$ is monotone increasing in $n$.

For a measure $\mu$ on $L_n$, $n\ge 3$, let 
$$
M_1(\mu)=\sup\Big\{\frac{\mu(B(1,2k+1))}{\mu(B(1,k))}:0\le k<\Big\lceil \frac{n-2}{3}\Big\rceil\Big\}
$$
and
\begin{align*}
 M_2(\mu)=&\sup \Big\{ \frac{\mu(B(j,2k+1))}{\mu(B(j,k))}: 1<j< \Big\lceil \frac{n}{2}\Big\rceil, \\[.3cm]
&\qquad      \qquad
  \frac{ \lceil \frac{n}{2}\rceil-j-1}{2}<k<\min\Big\{\frac{j-2}{2}, \Big\lceil\frac{n-2j}{3}\Big\rceil,\Big\lceil \frac{n}{2}\Big\rceil-j\Big\} \Big\}.
\end{align*}
Notice that the balls in the denominator of $M_1(\mu)$ do not contain the vertex $\lceil \frac{n}{2}\rceil$. On the other hand, balls in the denominator of $M_2(\mu)$ do not contain the vertex $\lceil \frac{n}{2}\rceil$ neither, whereas balls in the numerator of $M_2(\mu)$ always contain the vertex $\lceil \frac{n}{2}\rceil$.

\medskip

The following result is the main estimate we will use to study $C_{L_n}$:

\begin{thm}\label{t:M1M2M3}
Let $\mu$ be a symmetric measure on $L_n$. If $C_\mu^0<3$, then
$$
C_\mu= \max\{M_1(\mu),M_2(\mu),C_\mu^0\}.
$$
\end{thm}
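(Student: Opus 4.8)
The plan is to show that every ball-quotient $\frac{\mu(B(j,2k+1))}{\mu(B(j,k))}$ appearing in the definition of $C_\mu$ is bounded above by $\max\{M_1(\mu),M_2(\mu),C_\mu^0\}$; since the reverse inequality is immediate (all three quantities are suprema over subfamilies of the quotients defining $C_\mu$, using $C_\mu^0\le C_\mu$), this gives the claim. First I would record the structural consequences of $C_\mu^0<3$ obtained in Section~\ref{sec3}: by Lemmas~\ref{l:localmin} and~\ref{l:C<3} a symmetric measure with $C_\mu^0<3$ has $a_i<a_{i+1}$ for $1\le i<\lceil\frac n2\rceil$, so $\mu$ is strictly unimodal, peaking at the center $\lceil\frac n2\rceil$ (two equal central values if $n$ is even). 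This monotonicity is the engine of all the estimates that follow, because it lets one compare a ball-sum with a constant multiple of its ``extreme'' term.

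Next I would classify an arbitrary pair $(j,k)$ with $1\le j\le n$, $0\le k\le\lceil\frac{n-1}{2}\rceil$, using symmetry to reduce to $j\le\lceil\frac n2\rceil$. The case $k=0$ is exactly $C_\mu^0$. For $k\ge1$ I distinguish according to whether the center $c=\lceil\frac n2\rceil$ lies in the denominator ball $B(j,k)$, in the numerator ball $B(j,2k+1)$ only, or in neither. If $c\notin B(j,2k+1)$, then both balls sit entirely on one monotone branch, so $B(j,2k+1)$ is a union of $B(j,k)$ with strictly larger terms on one or both ends, and a direct comparison against $\frac{a_{i-1}+a_i+a_{i+1}}{a_i}\le C_\mu^0$ type estimates (splitting the numerator ball into the denominator ball plus two ``flanking'' intervals, and using unimodality to bound each flank by the boundary value of $B(j,k)$) yields a bound of the form $\le C_\mu^0$ or reduces it to one of the previously treated configurations. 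If $c\in B(j,k)$ already, then $B(j,2k+1)=B(j,k)\cup(\text{two flanks})$ with the flanks lying on the decreasing tails away from the center; again unimodality bounds the flank sums by the extreme terms of $B(j,k)$ and, after using $C_\mu^0<3$, produces a quotient $\le 3$ and in fact $\le\max\{M_1,M_2,C_\mu^0\}$. The genuinely new contributions come from the intermediate regime where $c\in B(j,2k+1)\setminus B(j,k)$: if $j=1$ this is precisely the range defining $M_1(\mu)$, and if $1<j<c$ the constraints ``$B(j,k)$ avoids $c$, $B(j,2k+1)$ contains $c$'' unwind to exactly the double inequality on $k$ in the definition of $M_2(\mu)$ — here I would carefully verify that the arithmetic conditions $\frac{c-j-1}{2}<k$ (numerator reaches $c$), $k<\lceil\frac n2\rceil-j$ (denominator misses $c$), $k<\frac{j-2}{2}$ (denominator ball stays inside $L_n$ on the left, i.e. its quotient is not already an $M_1$-type one), and $k<\lceil\frac{n-2j}{3}\rceil$ (the numerator ball does not overshoot past $c$ far enough to be symmetric-reducible to a smaller case) are exactly what is needed, and that every $(j,k)$ not satisfying all of them is reducible by symmetry or by the flank comparisons above to a case already bounded by $C_\mu^0$, $M_1$, or a smaller instance.

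The main obstacle I anticipate is purely combinatorial bookkeeping: pinning down, for each $(j,k)$, which portion of $B(j,2k+1)$ sticks out beyond $B(j,k)$ and beyond the endpoints $1,n$, and showing that the ``leftover'' cases — those not captured by $M_1$ or $M_2$ — can always be folded back, via the reflection $j\mapsto n+1-j$ and via the monotone-flank estimate, onto a case with strictly smaller radius or onto the $k=0$ case. Making this reduction terminate (so it is a genuine induction on $k$, or on the distance from the configuration to a ``canonical'' one) is the delicate point; once the case analysis is organized correctly, each individual bound is a short computation with Lemma~\ref{l:holder} and the unimodality inequalities $a_{i-1}<a_i<a_{i+1}$ together with $a_{i-1}+a_i+a_{i+1}<3a_i$ rearranged as $\frac{a_{i-1}+a_{i+1}}{2}<a_i$. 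I would present the argument by first stating the reduction lemma (every quotient is $\le$ an $M_1$-quotient, an $M_2$-quotient, or $C_\mu^0$), then proving it by the three-way split above, and finally assembling the displayed identity.
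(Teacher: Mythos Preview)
Your overall strategy---classify each pair $(j,k)$ and reduce the quotient to one of the three named quantities---is the same as the paper's, but your organizing principle (where the center $c=\lceil n/2\rceil$ sits relative to the two balls) does not line up with the $M_1/M_2/C_\mu^0$ split, and this produces a genuine gap in regime~1. You assert that when $c\notin B(j,2k+1)$ one can ``use unimodality to bound each flank by the boundary value of $B(j,k)$'' and land below $C_\mu^0$. This is false already for $j=1$: with $n$ large and $k=1$ the ball $B(1,3)=\{1,2,3,4\}$ misses $c$, yet the right flank $\{3,4\}$ consists of terms \emph{larger} than the boundary value $a_2$; for the sine measure $\sigma$ one has $\sigma(B(1,3))/\sigma(B(1,1))\approx 10/3>C_\sigma^0$. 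In fact $M_1(\mu)$ ranges over all $0\le k<\lceil\frac{n-2}{3}\rceil$, and for most of these $c\notin B(1,2k+1)$---so these are regime-1 quotients, not regime-3 ones as you claim, and they can strictly exceed $C_\mu^0$.

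The paper's resolution of this regime is the substantive part of the argument. When $k\ge\frac{j-2}{2}$ the quotient is reduced not to $C_\mu^0$ but to an $M_1$-quotient, via a monotonicity comparison of $B(j,k)$ with $B(1,2k)$ (so your gloss of $k<\frac{j-2}{2}$ as ``denominator ball stays inside $L_n$ on the left'' is off: that condition is $k<j-1$). Only when $k<\frac{j-2}{2}$ and $j+2k+1\le c$ does one get $\le C_\mu^0$, and this uses the \emph{upper} inequality of Lemma~\ref{l:C<3}, namely $a_{j+2k+1}<\tfrac{j+2k+1}{j+k}\,a_{j+k}$, to obtain $\frac{a_{j+2k}+a_{j+2k+1}}{a_{j+k}}<2+\frac{2k+1}{j+k}\le 2+\tfrac23$, followed by the numerical fact $2+\tfrac23\le 1+2\cos\frac{\pi}{n+1}=C_{L_n}^0\le C_\mu^0$ for $n\ge5$. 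You invoke only monotonicity and the midpoint concavity $a_{i-1}+a_{i+1}<2a_i$; the linear-growth bound is the missing ingredient, and without it the ``short computation'' you anticipate does not close.
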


\begin{proof}
The proof will be split in a series of claims. Given $\mu$ with  $C_\mu^0<3$, by Lemma \ref{l:C<3}, we have that $a_j=\mu(j)$ satisfies that $a_i<a_j<\frac{j}{i}a_i$ for $1\leq i<j \leq\lceil\frac{n}{2}\rceil$. Also, since $a_j=a_{n+1-j}$, when computing $C_\mu$ we only need to consider balls centered at a vertex $1\leq j\leq \lceil\frac{n}{2}\rceil$. In what follows, we will use the convention that $a_l=0$, for $l\leq0$ or $l>n$.

First, note that to calculate $C_\mu$ we only need quotients of balls for which the ball in the denominator is contained in $\{1,\ldots,\lceil\frac{n}{2}\rceil\}$:

\begin{fact}\label{j+k leq n/2}
$$C_\mu=\sup\Big\{\frac{\mu(B(j,2k+1))}{\mu(B(j,k))}:1\leq j\leq \Big\lceil\frac{n}{2}\Big\rceil,j+k\leq\Big\lceil\frac{n}{2}\Big\rceil\Big\}.$$
\end{fact}

\begin{proof}
Let $1\leq j\leq \lceil\frac{n}{2}\rceil$ and $k>\lceil\frac{n}{2}\rceil-j$. Note that
$$
\mu(B(j,k))=\sum_{i=\lceil\frac{n}{2}\rceil-j+1}^{\min\{k,j-1\}} a_{j-i} + \mu\bigg(B\bigg(j,\Big\lceil\frac{n}{2}\Big\rceil-j\bigg)\bigg)+\sum_{i=\lceil\frac{n}{2}\rceil-j+1}^{\min\{k,n-j\}}a_{j+i},
$$
and
\begin{align*}
\mu(B(j,2k+1))&=\sum_{i=\lceil\frac{n}{2}\rceil-j+1}^{\min\{k,j-1\}} (a_{j-2i-1}+ a_{j-2i} )+ \mu\bigg(B\bigg(j,2\bigg(\Big\lceil\frac{n}{2}\Big\rceil-j\bigg)+1\bigg)\bigg)\\
&\qquad\qquad +\sum_{i=\lceil\frac{n}{2}\rceil-j+1}^{\min\{k,n-j\}}(a_{j+2i}+a_{j+2i+1}).
\end{align*}

For each $\lceil\frac{n}{2}\rceil-j<i\leq k$, we have $a_{j+2i}+a_{j+2i+1}\leq 2a_{j+i}$ and if  $j-i\geq1$ we also have $a_{j-2i-1}+a_{j-2i}\leq 2 a_{j-i}$. Therefore, by Lemma~\ref{l:holder}, we have that
\begin{align*}
\frac{\mu(B(j,2k+1))}{\mu(B(j,k))}&\leq \max\Big\{\frac{\mu(B(j,2(\lceil\frac{n}{2}\rceil-j)+1))}{\mu(B(j,\lceil\frac{n}{2}\rceil-j))},\frac{a_{j-2i-1}+a_{j-2i}}{a_{j-i}},\frac{a_{j+2i}+a_{j+2i+1}}{a_{j+i}}\Big\}\\
&\leq \max\Big\{\frac{\mu(B(j,2(\lceil\frac{n}{2}\rceil-j)+1))}{\mu(B(j,\lceil\frac{n}{2}\rceil-j))},2\Big\}.
\end{align*}
Since $C_{\mu}\ge2$ \cite{ST}, the conclusion follows.
\end{proof}

Except for $j= \lceil\frac{n}{2}\rceil$, we can actually assume that the ball in the denominator does not contain the vertex $\lceil\frac{n}{2}\rceil$:

\begin{fact}\label{B(j,k)}
For $1\leq j<\lceil\frac{n}{2}\rceil$, and $k=\lceil\frac{n}{2}\rceil-j$ we have
$$
\frac{\mu(B(j,2k+1))}{\mu(B(j,k))}\leq \max\Big\{\frac{\mu(B(j,2k-1))}{\mu(B(j,k-1))},2\Big\}.
$$
\end{fact}

\begin{proof}
As above, we have
\begin{align*}
\frac{\mu(B(j,2k+1))}{\mu(B(j,k))}&\leq \frac{\mu(B(j,2k-1))+a_{j+2k}+a_{j+2k+1}}{\mu(B(j,k-1))+a_{j+k}}\\
&\leq \max\Big\{\frac{\mu(B(j,2k-1))}{\mu(B(j,k-1))},\frac{a_{j+2k}+a_{j+2k+1}}{a_{j+k}}\Big\}.
\end{align*}
The conclusion follows from the fact that $a_{j+k}=a_{\lceil\frac{n}{2}\rceil}\geq \max\{a_{j+2k},a_{j+2k+1}\}$.
\end{proof}

\begin{fact}\label{k>n+1-2j/3}
If $k\geq\lceil\frac{n-2j}{3}\rceil$, then 
$$
\frac{\mu(B(j,2k+1))}{\mu(B(j,k))}\leq\max\Big\{\frac{\mu(B(j,2k-1))}{\mu(B(j,k-1))},\frac{\mu(B(j+k,1))}{\mu(B(j+k,0))},\frac{\mu(B(j-k,1))}{\mu(B(j-k,0))}\Big\},
$$
the last term in the max appearing only when $k<j$.
\end{fact}

\begin{proof}
Suppose $k<j$, then
$$
\frac{\mu(B(j,2k+1))}{\mu(B(j,k))}\leq\max\Big\{\frac{a_{j-2k-1}+a_{j-2k}}{a_{j-k}},\frac{\mu(B(j,2k-1))}{\mu(B(j,k-1))},\frac{a_{j+2k}+a_{j+2k+1}}{a_{j+k}}\Big\}.
$$
Since $k\geq\lceil\frac{n-2j}{3}\rceil$, we have that $j+2k>n+1-(j+k+1)$ and $j+2k+1\geq n+1-(j+k)$. Hence, by monotonicity and symmetry of the weights we have
$$
\frac{a_{j+2k}+a_{j+2k+1}}{a_{j+k}}\leq\frac{a_{n+1-(j+k+1)}+a_{n+1-(j+k)}}{a_{j+k}}=\frac{a_{j+k}+a_{j+k+1}}{a_{j+k}}<\frac{\mu(B(j+k,1))}{\mu(B(j+k,0))}.
$$
Using the monotonicity we also have
$$
\frac{a_{j-2k-1}+a_{j-2k}}{a_{j-k}}\leq\frac{a_{j-k-1}+a_{j-k}}{a_{j-k}}<\frac{\mu(B(j-k,1))}{\mu(B(j-k,0))}.
$$
The estimate when $j\leq k$ is similar, only the term $(a_{j-2k-1}+a_{j-2k})/a_{j-k}$ does not appear in the first quotient.
\end{proof}

\begin{fact}\label{alfa}
Let $\alpha\in(0,1)$. If $k\leq\frac{\alpha j-1}{2-\alpha}$, $j+2k+1\leq\lceil \frac{n}{2}\rceil$, then 
$$
\frac{\mu(B(j,2k+1))}{\mu(B(j,k))}\leq\max\Big\{\frac{\mu(B(j,2k-1))}{\mu(B(j,k-1))},2+\alpha\Big\}.
$$
\end{fact} 

\begin{proof}
For $\alpha\in(0,1)$ we have $k\le \frac{\alpha j-1}{2-\alpha}<j$, so we proceed as above:
$$
\frac{\mu(B(j,2k+1))}{\mu(B(j,k))}\leq\max\Big\{\frac{a_{j-2k-1}+a_{j-2k}}{a_{j-k}},\frac{\mu(B(j,2k-1))}{\mu(B(j,k-1))},\frac{a_{j+2k}+a_{j+2k+1}}{a_{j+k}}\Big\}.
$$
By monotonicity $\frac{a_{j-2k-1}+a_{j-2k}}{a_{j-k}}<2$. Now, as  $j+2k+1\leq\lceil \frac{n}{2}\rceil$, by Lemma \ref{l:C<3} we have 
$$
\frac{a_{j+2k}+a_{j+2k+1}}{a_{j+k}}<\frac{j+2k+j+2k+1}{j+k}\leq 2+\alpha.
$$
\end{proof}

\begin{fact}\label{k geq j-1}
If $k\geq j-1$ then 
$$
\frac{\mu(B(j,2k+1))}{\mu(B(j,k))}\leq \frac{\mu(B(1,2(j+k-1)+1))}{\mu(B(1,j+k-1))}.
$$
\end{fact}

\begin{proof}
If $k\geq j-1$, then $B(j,k)=B(1,j+k-1)$. Hence,
$$
\frac{\mu(B(j,2k+1))}{\mu(B(j,k))}= \frac{\mu(B(1,j+2k))}{\mu(B(1,j+k-1))}\leq \frac{\mu(B(1,2(j+k-1)+1))}{\mu(B(1,j+k-1))}.
$$
\end{proof}

\begin{fact}\label{k geq j-1/2}
If $\frac{j-2}2\leq k<j-1$ then 
$$
\frac{\mu(B(j,2k+1))}{\mu(B(j,k))}\leq \frac{\mu(B(1,4k+1))}{\mu(B(1,2k))}.
$$
\end{fact}

\begin{proof}
As $k<j-1$, we have that $B(j,k)$ consists of $2k+1$ vertices. By monotonicity of the measure we have then that
$$
\mu(B(j,k))\geq \mu(B(1,2k)).
$$
Now, since $k\geq\frac{j-2}{2}$ we have 
$$
B(j,2k+1)\subset B(1,j+2k)\subset B(1,4k+1).
$$
Therefore, we get
$$
\frac{\mu(B(j,2k+1))}{\mu(B(j,k))}\leq \frac{\mu(B(1,4k+1))}{\mu(B(1,2k))}.
$$
\end{proof}

Now, let us finish the proof of the theorem. First note that using Claim \ref{k>n+1-2j/3} for $j=1$ it follows that
$$
M_1(\mu)\le\max\bigg\{\sup\Big\{\frac{\mu(B(1,2k+1))}{\mu(B(1,k))}: k\geq0\Big\},C^0_\mu\bigg\}.
$$

Given $1< j\leq \lceil \frac{n}{2}\rceil$ and $1\leq k\leq  \lceil \frac{n}{2}\rceil$, in order to bound the quotient
$$
\frac{\mu(B(j,2k+1))}{\mu(B(j,k))}
$$ 
by Claims \ref{j+k leq n/2} and \ref{B(j,k)}, we can assume without loss of generality that $j+k<  \lceil \frac{n}{2}\rceil$. 

Suppose first that $k\geq\frac{j-2}{2}$, then by Claims \ref{k geq j-1} and \ref{k geq j-1/2}, 
$$
\frac{\mu(B(j,2k+1))}{\mu(B(j,k))}\leq M_1(\mu).
$$ 
Hence, we can assume $k<\frac{j-2}{2}$. Let $\alpha=\frac{2j-2}{3j-2}$. By Claim \ref{alfa}, for $k\leq\frac{\alpha j-1}{2-\alpha}=\frac{j-2}{2}$, as long as $j+2k+1\leq\lceil \frac{n}{2}\rceil$, we get
$$
\frac{\mu(B(j,2k+1))}{\mu(B(j,k))}\leq \max\Big\{\frac{\mu(B(j,2k-1))}{\mu(B(j,k-1))},2+\frac{2j-2}{3j-1}\Big\}.
$$ 
Since $j\geq2$, for $n\geq5$ we have that 
$$
2+\frac{2j-2}{3j-2}\leq2+\frac23\leq1+2\cos\Big(\frac{\pi}{n+1}\Big)=C_{L_n}^0\leq C_\mu^0.
$$
Hence, for $n\geq 5$, as long as $j+2k+1\leq \lceil \frac{n}{2}\rceil$, we can inductively reduce the radius $k$, and we get
$$
\frac{\mu(B(j,2k+1))}{\mu(B(j,k))}\leq C_\mu^0.
$$ 
Note that for $n\leq 4$ one can directly see that the result holds.

An application of Claim \ref{k>n+1-2j/3} yields that the only quotients of balls which are not bounded by $M_1(\mu)$ or $C_\mu^0$, are those appearing in the expression of $M_2(\mu)$. This finishes the proof.
\end{proof}

As an application of Theorem \ref{t:M1M2M3}, we can show the following monotonicity property of $C_{L_n}$:

\begin{prop}\label{p:C_Ln non-decreasing}
For every $n\in \mathbb N$, $C_{L_n}\leq C_{L_{n+1}}$.
\end{prop}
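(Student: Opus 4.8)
The strategy is to embed an optimal (or near-optimal) doubling minimizer for $L_n$ into $L_{n+1}$ in a way that controls the doubling constant. Concretely, I would take $\mu \in DM(L_n)$ and, by Lemma~\ref{l:symmetric}, assume $\mu$ is symmetric; write $a_j = \mu(j)$ for $1 \le j \le n$. If $C_\mu^0 \ge 3$, then $C_{L_n} = C_\mu \ge C_\mu^0 \ge 3 > C_{L_{n+1}}$ is impossible since $C_{L_{n+1}} < 3$, so in fact $C_{L_n}^0 < 3$ forces $C_{L_n} < 3$ via Theorem~\ref{t:M1M2M3} (using that $M_1, M_2$ are themselves bounded by $3$ for such measures — this needs a short check, or one simply notes $C_{L_n} \le C_\lambda = 3$ with equality only in degenerate cases handled separately). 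Having reduced to $C_\mu^0 < 3$, I would define a symmetric measure $\tilde\mu$ on $L_{n+1}$ by "splitting the middle": insert a new vertex at the center and assign weights so that the monotonicity and the key quotient bounds transfer. The natural candidates are $\tilde\mu(j) = a_j$ for $j \le \lceil n/2 \rceil$, a mirrored copy on the right half, and a central weight chosen to respect $\tilde\mu_i < \tilde\mu_j$ from Lemma~\ref{l:C<3}.

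\textbf{Key steps.} First, reduce to symmetric measures with $C_\mu^0 < 3$ as above; handle the small cases $n \le 4$ (or wherever Theorem~\ref{t:M1M2M3}'s hypotheses or the "$n \ge 5$" caveats bite) by direct inspection. Second, construct $\tilde\mu$ on $L_{n+1}$ from the chosen $\mu \in DM(L_n)$, ensuring $\tilde\mu$ is symmetric, strictly increasing toward the center, and agrees with $\mu$ on an initial segment. Third, apply Theorem~\ref{t:M1M2M3} to $\tilde\mu$: we must verify $C_{\tilde\mu}^0 < 3$ (immediate from the construction and $C_\mu^0 < 3$, since the local quotients of $\tilde\mu$ are the same as those of $\mu$ except near the new center, where strict monotonicity keeps them below $3$ — indeed the central quotient is of the form $(a+b+a)/b$ type with the right inequalities), and then bound $M_1(\tilde\mu)$, $M_2(\tilde\mu)$, and $C_{\tilde\mu}^0$ each by $C_\mu = C_{L_n}$. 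The quotients defining $M_1(\tilde\mu)$ involve balls centered at vertex $1$; these are sums of weights that, after the insertion, differ from the corresponding sums for $\mu$ only by the inclusion of the (small, central) new weights, and Lemma~\ref{l:holder} lets us dominate the perturbed ratio by the original ones together with an easily controlled extra term. A parallel argument handles $M_2(\tilde\mu)$. Concluding, $C_{L_{n+1}} \le C_{\tilde\mu} \le C_{L_n}$... wait — this gives the wrong direction.

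\textbf{Correcting the direction.} The inequality we want is $C_{L_n} \le C_{L_{n+1}}$, so the embedding must go the other way: from a minimizer on $L_{n+1}$ we must produce a measure on $L_n$ of no larger doubling constant. Thus I would instead take $\nu \in DM(L_{n+1})$, symmetric with $C_\nu^0 < 3$, and \emph{delete} the central vertex (if $n+1$ is odd) or \emph{merge the two central vertices} (if $n+1$ is even), obtaining a symmetric measure $\mu$ on $L_n$. The content is then: every ball quotient for $\mu$ on $L_n$ is dominated, via Lemma~\ref{l:holder} and the monotonicity from Lemma~\ref{l:C<3}, by a corresponding ball quotient for $\nu$ on $L_{n+1}$ — intuitively because removing the central vertex only removes the largest weight from the numerators, and the radius bookkeeping in Theorem~\ref{t:M1M2M3} matches up. Running Theorem~\ref{t:M1M2M3} on $\mu$, one checks $M_1(\mu), M_2(\mu), C_\mu^0 \le C_\nu = C_{L_{n+1}}$, whence $C_{L_n} \le C_\mu \le C_{L_{n+1}}$.

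\textbf{Main obstacle.} The delicate point is the index/radius arithmetic: after deleting or merging the central vertex, a ball $B(j,k)$ in $L_n$ corresponds to a ball in $L_{n+1}$ of slightly different radius, and one must confirm that the numerator ball (of radius $2k+1$) and denominator ball (of radius $k$) are adjusted \emph{compatibly} so that the quotient genuinely decreases (or is absorbed into $M_1$, $M_2$, or $C^0$). The even case, where two central vertices of equal maximal weight are collapsed into one, is the more technical of the two and is where I expect to spend the most effort; the parity caveats and the small-$n$ exceptions flagged inside the proof of Theorem~\ref{t:M1M2M3} will also need to be revisited case by case. Handling the boundary (vertex $1$) quotients defining $M_1$ is comparatively routine since those balls are far from the center once $k$ is in the relevant range.
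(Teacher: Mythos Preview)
After your self-correction, your approach is essentially the paper's: start from a symmetric measure $\mu$ on $L_{n+1}$ with $C_\mu^0<3$, remove the central vertex to get a measure $\nu$ on $L_n$, and invoke Theorem~\ref{t:M1M2M3} to bound $M_1(\nu)$, $M_2(\nu)$, and $C_\nu^0$ each by $C_\mu$.

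Two remarks are worth making. First, the parity case-split you single out as the ``main obstacle'' is in fact unnecessary. The paper uses the single construction
\[
\nu(j)=\begin{cases}\mu(j)&j<\lceil\tfrac{n+1}{2}\rceil,\\ \mu(j+1)&j\ge\lceil\tfrac{n+1}{2}\rceil,\end{cases}
\]
i.e.\ it simply deletes vertex $\lceil\frac{n+1}{2}\rceil$ in \emph{both} parities. When $n+1$ is even the two central vertices of $L_{n+1}$ carry equal weight by symmetry, so deleting one of them already yields a symmetric measure on $L_n$; there is no separate ``merge'' step. If by merging you intended to \emph{sum} the two central weights into a single vertex, that would be a genuinely different construction and a dangerous one: it creates a jump at the center that can push the local quotient at the neighboring vertex above $C_\mu^0$, potentially above $3$, and then Theorem~\ref{t:M1M2M3} is no longer available.

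Second, you do not need to start from a doubling minimizer on $L_{n+1}$; any symmetric $\mu$ with $C_\mu<3$ will do, since the construction gives $C_\nu\le C_\mu$ and one then takes the infimum over $\mu$. The actual verification of $M_1(\nu)\le M_1(\mu)$, $C_\nu^0\le C_\mu^0$, and $M_2(\nu)\le C_\mu$ is, as you anticipate, a careful but routine index check: the balls for $\nu$ centered away from the deleted vertex coincide with balls for $\mu$, and when a $\nu$-ball would have crossed the deleted center one compares it to a $\mu$-ball of radius one larger and uses that the omitted weight $\mu(\lceil\frac{n+1}{2}\rceil)$ is maximal, so the numerator can only go down.
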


\begin{proof}
By Lemma \ref{l:symmetric}, it is enough to show that for every symmetric measure $\mu$ on $L_{n+1}$ there is a measure $\nu$ on $L_n$ such that $C_\nu\leq C_\mu$. We know $C_{L_n}\leq 3$. If  $C_{L_n}=3$, the statement holds trivially (in fact, we will see in Theorem~\ref{lessthan3} that this case never happens). Otherwise, if  $C_{L_n}<3$, we can always assume without loss of generality that $C_\mu<3$. Given such $\mu$ on $L_{n+1}$, let us consider the measure $\nu$ on $L_n$ defined for $1\leq j\leq n$, as follows
$$
\nu(j)=
\left\{
\begin{array}{lc}
 \mu(j) & \text{ if }j<\lceil\frac{n+1}{2}\rceil,  \\
 \mu(j+1) &   \text{ if }j\geq\lceil\frac{n+1}{2}\rceil.
\end{array}
\right.
$$

Let $0\leq k<\lceil\frac{n-2}{3}\rceil$. If follows that $k+1<\lceil\frac{n+1}{2}\rceil$ and
$$
\frac{\nu(B(1,2k+1))}{\nu(B(1,k))}=\left\{
\begin{array}{lc}
 \frac{\mu(B(1,2k+1))}{\mu(B(1,k))} & \text{ if }2k+2<\lceil\frac{n+1}{2}\rceil,  \\
\frac{\mu(B(1,2k+2))-\mu(\lceil\frac{n+1}{2}\rceil)}{\mu(B(1,k))} &   \text{ if }2k+2\geq\lceil\frac{n+1}{2}\rceil.
\end{array}
\right.
$$
Since $\mu(2k+3)\leq\mu(\lceil\frac{n+1}{2}\rceil)$, it follows that $M_1(\nu)\leq M_1(\mu)$.

Similarly, we have
$$
\frac{\nu(B(j,1))}{\nu(B(j,0))}=\left\{
\begin{array}{ll}
 \frac{\mu(B(j,1))}{\mu(B(j,0))} & \text{ if }j<\lceil\frac{n+1}{2}\rceil-1,  \\
 \frac{\mu(\lceil\frac{n+1}{2}\rceil-2)+\mu(\lceil\frac{n+1}{2}\rceil-1)+\mu(\lceil\frac{n+1}{2}\rceil+1)}{\mu(\lceil\frac{n+1}{2}\rceil-1)} & \text{ if }j=\lceil\frac{n+1}{2}\rceil-1,  \\
 \frac{\mu(\lceil\frac{n+1}{2}\rceil-1)+\mu(\lceil\frac{n+1}{2}\rceil+1)+\mu(\lceil\frac{n+1}{2}\rceil+2)}{\mu(\lceil\frac{n+1}{2}\rceil+1)}& \text{ if }j=\lceil\frac{n+1}{2}\rceil,  \\
 \frac{\mu(B(j+1,1))}{\mu(B(j+1,0))} & \text{ if }j\geq \lceil\frac{n+1}{2}\rceil+1.
\end{array}
\right.
$$
Note that 
$$
\frac{\mu(\lceil\frac{n+1}{2}\rceil-2)+\mu(\lceil\frac{n+1}{2}\rceil-1)+\mu(\lceil\frac{n+1}{2}\rceil+1)}{\mu(\lceil\frac{n+1}{2}\rceil-1)}\leq\frac{\mu(B(\lceil\frac{n+1}{2}\rceil-1,1))}{\mu(B(\lceil\frac{n+1}{2}\rceil-1,0))},
$$
and also that
$$
\frac{\mu(\lceil\frac{n+1}{2}\rceil-1)+\mu(\lceil\frac{n+1}{2}\rceil+1)+\mu(\lceil\frac{n+1}{2}\rceil+2)}{\mu(\lceil\frac{n+1}{2}\rceil+1)}\leq\frac{\mu(B(\lceil\frac{n+1}{2}\rceil+1,1))}{\mu(B(\lceil\frac{n+1}{2}\rceil+1,0))}.
$$
Thus, $C_\nu^0\leq C_\mu^0$.

Finally, for $j,k$ as in the formula for $M_2(\nu)$, we have $j+k<\lceil\frac{n}{2}\rceil\leq \lceil\frac{n+1}{2}\rceil$, which implies that $\nu(B(j,k))=\mu(B(j,k))$. Hence, we have
$$
\frac{\nu(B(j,2k+1))}{\nu(B(j,k))}=\left\{
\begin{array}{ll}
 \frac{\mu(B(j,2k+1))}{\mu(B(j,k))} & \text{ if }j+2k+1<\lceil\frac{n+1}{2}\rceil,  \\
 \frac{\mu(B(j,2k+2))-\mu(\lceil\frac{n+1}{2}\rceil)}{\mu(B(j,k))} & \text{ if }j+2k+1\geq \lceil\frac{n+1}{2}\rceil.
\end{array}
\right.
$$
Since, $\mu(j+2k+2)\leq \mu(\lceil\frac{n+1}{2}\rceil)$, it follows that $M_2(\nu)\leq C_\mu$. 
The conclusion follows by Theorem \ref{t:M1M2M3}.
\end{proof}

\section{$C_\mu$ for a doubling minimizer on $L_n$}\label{secct6}

We will show now that for doubling minimizers on $L_n$ one can actually get rid of $M_2(\mu)$ in Theorem \ref{t:M1M2M3}. We emphasize that for this we have to make use of the fact that a measure $\mu$ is a doubling minimizer, and not only that $C_\mu^0<3$. 

To this end, let us introduce the following notation: given a measure $\mu$ on $L_n$, a vertex $1\leq i\leq n$ and $\theta\in\mathbb R$, let $\mu_{i,\theta}$ be the measure given by
\begin{equation}\label{eq:perturbmu}
\mu_{i,\theta}(j)=\left\{
\begin{array}{ll}
 \mu(j) &  \text{ if }j\neq i,  \\
 \mu(i)+\theta & \text{ if }j=i.  
\end{array}
\right.
\end{equation}

\begin{prop}\label{t:minimizer}
For every $n\in \mathbb N$, if $\mu$ is a symmetric measure on $L_n$ such that $C_{L_n}=C_\mu$, then 
$$
C_\mu=\max\{M_1(\mu),C_\mu^0\}.
$$
\end{prop}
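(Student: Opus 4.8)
The plan is to combine Theorem~\ref{t:M1M2M3} with a perturbation argument that exploits the special role of the central vertex. Since $M_1(\mu)\le C_\mu$ and $C_\mu^0\le C_\mu$ for every measure, it suffices to prove the reverse inequality $C_\mu\le\max\{M_1(\mu),C_\mu^0\}$. (For $n\le 2$ this is immediate, so assume $n\ge 3$.) If $C_\mu^0=3$ there is nothing to prove: using $C_\mu=C_{L_n}\le 3$ (witnessed by the counting measure) together with $C_\mu^0\le C_\mu$ one gets $C_\mu=3=C_\mu^0$. So assume $C_\mu^0<3$; then Theorem~\ref{t:M1M2M3} gives $C_\mu=\max\{M_1(\mu),M_2(\mu),C_\mu^0\}$, and the task reduces to showing $M_2(\mu)\le\rho:=\max\{M_1(\mu),C_\mu^0\}$. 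I would argue by contradiction, assuming $M_2(\mu)>\rho$, so that $C_\mu=M_2(\mu)$ and the gap $\delta:=C_\mu-\rho$ is strictly positive.

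Set $m=\lceil n/2\rceil$, and let $S$ be the set of central vertices: $S=\{m\}$ if $n$ is odd and $S=\{m,m+1\}$ if $n$ is even (the second vertex being forced in by symmetry). The structural input, read off from the proof of Theorem~\ref{t:M1M2M3} and from the remarks following the definitions of $M_1$ and $M_2$, is twofold: (i) any ball-quotient $\mu(B(j,2k+1))/\mu(B(j,k))$ that is \emph{not} one of the $M_2$-quotients is $\le\rho$; and (ii) the numerator ball of every $M_2$-quotient meets $S$, while its denominator ball is disjoint from $S$ (indeed the latter is an interval contained in $\{1,\dots,m-1\}$). Consequently, any quotient whose numerator ball is disjoint from $S$, and any quotient whose denominator ball meets $S$, is automatically $\le\rho<C_\mu$. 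Now perturb $\mu$ by lowering the weight on the central vertices: for small $\theta>0$ put $\mu'(s)=\mu(s)-\theta$ for $s\in S$ and $\mu'(j)=\mu(j)$ otherwise; this is again a positive symmetric measure on $L_n$ (for $n$ even, $\mu'(m)=\mu'(m+1)$ by symmetry of $\mu$).

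Comparing each quotient $Q'(j,k)$ of $\mu'$ with the corresponding quotient $Q(j,k)$ of $\mu$, three cases arise. If the numerator ball $B(j,2k+1)$ is disjoint from $S$, then $Q'(j,k)=Q(j,k)\le\rho<C_\mu$. If $B(j,2k+1)$ meets $S$ but $B(j,k)$ does not, then the numerator strictly decreases while the denominator is unchanged, so $Q'(j,k)<Q(j,k)\le C_\mu$. Finally, if $B(j,k)$ meets $S$, then $Q(j,k)\le\rho<C_\mu$ by (i)--(ii), and $Q'(j,k)$ is a continuous function of $\theta$ (a ratio of affine functions whose denominator is bounded below by $\mu'(s)>0$) with $Q'(j,k)\to Q(j,k)$ as $\theta\to 0^+$, so $Q'(j,k)<C_\mu$ once $\theta$ is small enough. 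Since $L_n$ is finite there are only finitely many quotients, so choosing $\theta$ uniformly small makes all of them strictly below $C_\mu$; hence $C_{\mu'}<C_\mu=C_{L_n}$, contradicting the definition of $C_{L_n}$ as an infimum. Therefore $M_2(\mu)\le\rho$, which gives $C_\mu=\max\{M_1(\mu),C_\mu^0\}$.

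The main obstacle is precisely the third case above, and it is exactly there that the minimality of $\mu$ is used: lowering the central weight \emph{improves} the $M_2$-quotients (and every quotient having $S$ only in its numerator) but \emph{worsens} the quotients whose denominator ball contains a central vertex, such as $\mu(B(m,1))/\mu(m)$. The argument goes through only because Theorem~\ref{t:M1M2M3} guarantees all of these ``worsened'' quotients lie a fixed distance $\delta=C_\mu-\rho>0$ below $C_\mu$, so a sufficiently small perturbation leaves them harmless while strictly lowering the quotient(s) that realize $C_\mu=M_2(\mu)$; without minimality, $C_{\mu'}<C_\mu$ would be no contradiction at all. A minor bookkeeping point is the $n$ even case, where symmetry forces us to decrease the two middle weights at once; one checks that the $M_2$-denominator balls, being intervals contained in $\{1,\dots,m-1\}$, stay disjoint from $S=\{m,m+1\}$, so the structural facts (i)--(ii) apply verbatim.
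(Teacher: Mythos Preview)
Your argument is correct and uses the same core idea as the paper: perturb the central weight downward to strictly lower every quotient that realizes $C_\mu=M_2(\mu)$, while keeping all other quotients below $C_\mu$, contradicting minimality. The paper carries this out by choosing an explicit $\varepsilon$, checking $M_1(\nu),M_2(\nu),C_\nu^0<C_\mu$ for the perturbed measure $\nu=\mu_{\lceil n/2\rceil,-\varepsilon}$, and then re-applying Theorem~\ref{t:M1M2M3} to $\nu$; you instead split all ball-quotients into three cases according to how they meet $S$ and use a soft continuity/finiteness argument. Two points are worth noting. First, your treatment of even $n$ (perturbing both central vertices so that $\mu'$ stays symmetric) is more careful than the paper's, since the paper perturbs only at $\lceil n/2\rceil$ and then invokes Theorem~\ref{t:M1M2M3} on the resulting measure, which for even $n$ is no longer symmetric. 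Second, your structural fact (i) in full strength---that \emph{every} non-$M_2$-quotient is $\le\rho$---is what the last sentence of the proof of Theorem~\ref{t:M1M2M3} asserts, but for a case-3 quotient the chain of reductions in Claims~\ref{j+k leq n/2}--\ref{B(j,k)} can land on an $M_2$-index, so the bound $\le\rho$ is not entirely transparent there. This does not affect your proof: in case~3 you only need the strict inequality $Q(j,k)<C_\mu$, and that follows directly from the equality clause of Lemma~\ref{l:holder}, since the decomposition of any case-3 quotient always contains a ratio $\le 2<C_\mu$.
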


\begin{proof}
Suppose $\mu$ is a symmetric measure on $L_n$ such that $C_{L_n}=C_\mu>\max\{M_1(\mu),C_\mu^0\}$. In particular, $C_\mu\mu(\lceil\frac{n}{2}\rceil)-\mu(B(\lceil\frac{n}{2}\rceil,1))>0$. Hence, we can take $0<\varepsilon<\frac{C_\mu\mu(\lceil\frac{n}{2}\rceil)-\mu(B(\lceil\frac{n}{2}\rceil,1))}{C_\mu-1}$ and consider $\nu=\mu_{\lceil\frac{n}{2}\rceil,-\varepsilon}$ as in \eqref{eq:perturbmu} (note that $\varepsilon<\mu(\lceil\frac{n}{2}\rceil)$).

Observe that for $1\leq j<\lceil\frac{n}{2}\rceil$, we have 
$$
\frac{\nu(B(j,1))}{\nu(B(j,0))}\leq\frac{\mu(B(j,1))}{\mu(B(j,0))}\leq C_\mu^0.
$$
By our choice of $\varepsilon$ we also have
$$
\frac{\nu(B(\lceil\frac{n}{2}\rceil,1))}{\nu(B(\lceil\frac{n}{2}\rceil,0))}\leq\frac{\mu(B(\lceil\frac{n}{2}\rceil,1))-\varepsilon}{\mu(B(\lceil\frac{n}{2}\rceil,0))-\varepsilon}< C_\mu.
$$
Therefore, $C_\nu^0< C_\mu$. Now for $k<\lceil\frac{n-2}{3}\rceil< \lceil\frac{n}{2}\rceil$ we have
$$
\frac{\nu(B(1,2k+1))}{\nu(B(1,k))}\leq\frac{\mu(B(1,2k+1))}{\mu(B(1,k))}\leq M_1(\mu).
$$
Thus, $M_1(\nu)\leq M_1(\mu)<C_\mu$.

Finally, for $1<j<\lceil\frac{n}{2}\rceil$ and $\frac{\lceil\frac{n}{2}\rceil-j-1}{2}<k<\lceil\frac{n}{2}\rceil-j$, we have
$$
\frac{\nu(B(j,2k+1))}{\nu(B(j,k))}=\frac{\mu(B(j,2k+1))-\varepsilon}{\mu(B(j,k))}< C_\mu.
$$
Hence, $M_2(\nu)<C_\mu$. Since $C_{\nu}^0<3$, we can apply Theorem \ref{t:M1M2M3}, and it follows that $C_\nu= \max\{M_1(\nu), M_2(\nu),C_\nu^0\}<C_\mu=C_{L_n}$. This is a contradiction, so $C_\mu=\max\{M_1(\mu),C_\mu^0\}$ as claimed.
\end{proof}

In order to improve this result we need to estimate the constants associated to two measures playing a key role:  

\begin{lem}\label{l:senoycontar}
Let $\sigma,\lambda$  be the measures on $L_n$ given by $\sigma(j)=\sin\Big(\frac{j\pi}{n+1}\Big)$ and $\lambda(j)=1$ for every $1\leq j\leq n$. We have that
\begin{enumerate}[leftmargin=*]
\item[(i)]  $C_\sigma^0=1+2\cos(\pi/(n+1))$, $M_1(\sigma)\geq 4$ for $n$ large enough, $M_2(\sigma)\leq 2$.
\medskip

\item[(ii)] $C_\lambda^0=3$, $M_1(\lambda)=2$, $M_2(\lambda)=7/3$.
\end{enumerate}
\end{lem}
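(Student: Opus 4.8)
The plan is to verify each of the six numerical claims directly from the definitions of $C_\mu^0$, $M_1$ and $M_2$, using the monotonicity and concavity structure of $\sigma$ and the explicit computability of $\lambda$.

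\textbf{Part (ii) first, since it is essentially arithmetic.} For $\lambda(j)=1$ we have $\lambda(B(j,k))=\min\{2k+1,\ n\}$ (as long as the ball is not truncated on both sides; for the range of $k$ relevant to $M_1$ and $M_2$ truncation happens on at most one side). The equality $C^0_\lambda=3$ is already recorded in the text. For $M_1(\lambda)$, the balls $B(1,k)$ and $B(1,2k+1)$ with $0\le k<\lceil\frac{n-2}{3}\rceil$ are all genuine (untruncated on the far side) intervals starting at vertex $1$, so $\lambda(B(1,k))=k+1$ and $\lambda(B(1,2k+1))=2k+2$, giving the ratio $\frac{2k+2}{k+1}=2$ for every admissible $k$; hence $M_1(\lambda)=2$. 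For $M_2(\lambda)$, in the admissible range the ball $B(j,k)$ is an untruncated interval of $2k+1$ vertices, while $B(j,2k+1)$ contains the center $\lceil\frac n2\rceil$ so it is truncated on one side and has $j+2k+1$ vertices reflected — more precisely one computes $\lambda(B(j,2k+1))$ as twice the half-interval. The constraint $k>\frac{\lceil n/2\rceil-j-1}{2}$ is exactly what forces the numerator ball to reach past the midpoint, and plugging the extreme value $k=\frac{\lceil n/2\rceil-j-1}{2}+\tfrac12$ type bound gives the supremal ratio $7/3$; I would simply tabulate $\lambda(B(j,2k+1))/\lambda(B(j,k))$ over the (small) admissible $(j,k)$ region and read off that the maximum equals $7/3$ and is attained.

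\textbf{Part (i).} The value $C^0_\sigma=1+2\cos(\pi/(n+1))$ is exactly Lemma~\ref{l:regularpolygon}. For $M_2(\sigma)\le 2$: in the admissible range for $M_2$ the denominator ball $B(j,k)$ lies strictly to the left of the midpoint where $\sigma$ is increasing, while the numerator ball $B(j,2k+1)$ reaches past the midpoint. Writing both as sums of the $a_i=\sin\frac{i\pi}{n+1}$ and pairing up terms of the numerator with terms of the denominator using the symmetry $a_{n+1-i}=a_i$ and the concavity (so that $a_{i-1}+a_{i+1}\le 2a_i$), Lemma~\ref{l:holder} reduces the ratio to a maximum of expressions each $\le 2$; the key inequality is that for $\sigma$ every ``second-difference'' ratio $\frac{a_{i}+a_{i'}}{a_{\ell}}$ arising is bounded by $2$ because $\sigma$ is concave on $[1,\lceil n/2\rceil]$ and the relevant indices straddle the peak. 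For $M_1(\sigma)\ge 4$ for $n$ large: here I take a specific admissible $k$. Since $\sigma(B(1,k))=\sum_{i=1}^{k+1}\sin\frac{i\pi}{n+1}$ and for $k+1\ll n$ this is $\approx \frac{(k+1)^2\pi}{2(n+1)}$ (Riemann sum of a linear function), while $\sigma(B(1,2k+1))=\sum_{i=1}^{2k+2}\sin\frac{i\pi}{n+1}\approx \frac{(2k+2)^2\pi}{2(n+1)}=4\cdot\frac{(k+1)^2\pi}{2(n+1)}$, the ratio tends to $4$ as we let $k$ grow with $k=o(n)$ but $k\to\infty$; choosing for instance $k\sim\sqrt n$ makes both the ``small $k$'' correction and the curvature correction vanish, so $M_1(\sigma)\to 4$ and in particular $M_1(\sigma)\ge 4$ once $n$ is large enough. (One could also just observe $M_1(\sigma)\ge \lim_{k} \sigma(B(1,2k+1))/\sigma(B(1,k))$ along such a sequence and compute the limit is $4$.)

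\textbf{Expected main obstacle.} The arithmetic bookkeeping in $M_2$ — getting the ball cardinalities and the reflected sums exactly right near the midpoint $\lceil n/2\rceil$, separating the parities of $n$, and checking that every truncated ball really does land in the claimed range so that Lemma~\ref{l:C<3}/concavity applies — is the fiddly part; for $\lambda$ it is a finite check but one must be careful that the constraint region defining $M_2$ is nonempty (it is, for $n$ large) and identify which $(j,k)$ gives the extremal $7/3$. The $\sigma$ estimates, by contrast, are conceptually clean once one commits to the Riemann-sum asymptotics for $M_1$ and the concavity/symmetry pairing for $M_2$; I do not expect genuine difficulty there, only care with the error terms in the asymptotic lower bound $M_1(\sigma)\ge 4$.
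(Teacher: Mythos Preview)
Your treatment of $C_\sigma^0$, $C_\lambda^0$, $M_1(\lambda)$ and $M_1(\sigma)$ is essentially the paper's: for $M_1(\sigma)$ the paper also uses a small-angle comparison $\sin x\approx x$, only with $k$ taken proportional to $n$ (namely $k\approx\frac{n+1}{2m}$ for fixed $m$ with $\cos(\pi/m)>1-\varepsilon$) rather than your $k\sim\sqrt n$; both yield a ratio tending to $4$.

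The two places where your proposal diverges and has genuine gaps are the $M_2$ computations.

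\emph{For $M_2(\lambda)$.} You write that $B(j,2k+1)$ ``contains the center $\lceil n/2\rceil$ so it is truncated on one side''. Containing the midpoint has nothing to do with truncation at the endpoints $1,n$. The constraints in the definition of $M_2$ --- in particular $k<\frac{j-2}{2}$ and $k<\lceil\frac{n-2j}{3}\rceil$ --- are chosen precisely so that \emph{both} balls are untruncated intervals, of lengths $2k+1$ and $4k+3$. The paper therefore just writes $M_2(\lambda)=\sup_{k\ge1}\frac{4k+3}{2k+1}=\tfrac73$; no tabulation or parity split is needed.

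\emph{For $M_2(\sigma)$.} The paper does not argue via concavity at all. Since both balls are untruncated and centered at $j$, the product-to-sum identity
\[
\sin\Big(\frac{(j-i)\pi}{n+1}\Big)+\sin\Big(\frac{(j+i)\pi}{n+1}\Big)=2\sin\Big(\frac{j\pi}{n+1}\Big)\cos\Big(\frac{i\pi}{n+1}\Big)
\]
lets one factor $\sin\frac{j\pi}{n+1}$ out of numerator and denominator, reducing the quotient to a ratio of cosine sums $\sum_i\cos\frac{i\pi}{n+1}$, which is then bounded using the monotonicity of cosine. Your pairing/concavity sketch does not work as stated: pairing a denominator term $a_{j+i}$ (which has $j+i<\lceil n/2\rceil$) with numerator terms $a_{j+2i},a_{j+2i+1}$ gives $\frac{a_{j+2i}+a_{j+2i+1}}{a_{j+i}}\geq 2$ whenever those indices also lie left of the peak, so Lemma~\ref{l:holder} applied termwise yields the inequality in the wrong direction. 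The trig identity --- which exploits the exact eigenvector structure of $\sigma$, not merely its concavity --- is the device you are missing.
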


\begin{proof}	
\textit{(i)}  $C_\sigma^0$ has been computed in Lemma \ref{l:regularpolygon}. For $M_1(\sigma)$ note that given $\varepsilon>0$, there is $m\in\mathbb N$ such that $\cos(\pi/m)>(1-\varepsilon)$. Hence, for large enough $n$, using the well-known fact that $\sin  x\leq x\leq\tan x $ for $x\in(0,\pi/2)$, we deduce that for $j<\frac{n+1}{m}$ we have
$$\sin\Big(\frac{j\pi}{n+1}\Big)\leq\frac{j\pi}{n+1}\leq\frac{\sin(\frac{j\pi}{n+1})}{1-\varepsilon}.$$
Hence,
$$
M_1(\sigma)\geq\frac{\sum_{j=1}^{\frac{n+1}{m}}\sin\Big(\frac{j\pi}{n+1}\Big)}{\sum_{j=1}^{\frac{n+1}{2m}}\sin\Big(\frac{j\pi}{n+1}\Big)}\geq(1-\varepsilon)\frac{\sum_{j=1}^{\frac{n+1}{m}}j}{\sum_{j=1}^{\frac{n+1}{2m}}j}=4(1-\varepsilon)\frac{n+m+1}{n+2m+1}
$$
which approximates 4 for large enough $n$. Now, in order to estimate $M_2(\sigma)$ we use that $\sin\big(\frac{(j-i)\pi}{n+1}\big)+\sin\big(\frac{(j+i)\pi}{n+1}\big)=2\sin(\frac{j\pi}{n+1})\cos(\frac{i\pi}{n+1})$. Thus, for $j,k$ as in the expression of $M_2$, we have
\begin{align*}
\frac{\sigma(B(j,2k+1))}{\sigma(B(j,k))}&=\frac{\sum_{i=0}^{2k+1}\sin(\frac{(j-i)\pi}{n+1})+\sin(\frac{(j+i)\pi}{n+1})}{\sum_{i=0}^{k}\sin(\frac{(j-i)\pi}{n+1})+\sin(\frac{(j+i)\pi}{n+1})}\\
&=\frac{\sum_{i=0}^{2k+1}\cos(\frac{i\pi}{n+1})}{\sum_{i=0}^{k}\cos(\frac{i\pi}{n+1})}\\
&\leq\frac{\sum_{i=0}^{\min\{2k+1,\frac{n+1}{2}\}}\cos(\frac{i\pi}{n+1})}{\sum_{i=0}^{\min\{k,\frac{n-1}{4}\}}\cos(\frac{i\pi}{n+1})}\leq 2,
\end{align*}
where the last inequality follows from the monotonicity of cosine.

\textit{(ii)}  For the counting measure, it is clear that  $C_\lambda^0=3$, $M_1(\lambda)=2$ and $M_2(\lambda)=\sup_k \frac{4k+3}{2k+1}= 7/3$.
\end{proof}

\begin{lem}\label{prevlemm}
$
C_{L_n}^0=C_{L_n}\iff 2\le n\le8.
$\end{lem}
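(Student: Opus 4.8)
The plan is to establish both implications through the single measure $\sigma(j)=\sin\big(\tfrac{j\pi}{n+1}\big)$ of Lemma~\ref{l:regularpolygon}, together with the uniqueness statement of Theorem~\ref{thm:doubling spectra}. It is convenient to write $\phi=\tfrac{\pi}{2(n+1)}$, so that $a_j:=\sigma(j)=\sin(2j\phi)$ and, by Lemma~\ref{l:regularpolygon}, $C_\sigma^0=1+2\cos(2\phi)=1+2\cos\big(\tfrac{\pi}{n+1}\big)$; recall also that $C_{L_n}^0\le C_{L_n}$ always.

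\emph{Proof that $2\le n\le8$ implies $C_{L_n}^0=C_{L_n}$.} It suffices to produce a measure $\mu$ with $C_\mu=C_{L_n}^0$. For $n=2$ this is automatic, since $\operatorname{diam}(L_2)=1$ makes \eqref{eq:C_G} reduce to $C_{L_2}=C_{L_2}^0$. For $3\le n\le8$ take $\mu=\sigma$: it is symmetric, and $C_\sigma^0=1+2\cos(2\phi)<3$, which equals $C_{L_n}^0$ by Theorem~\ref{thm:C^0_Ln}; hence Theorem~\ref{t:M1M2M3} yields $C_\sigma=\max\{M_1(\sigma),M_2(\sigma),C_\sigma^0\}$. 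Since $M_2(\sigma)\le2<C_\sigma^0$ by Lemma~\ref{l:senoycontar}(i), the only remaining point is $M_1(\sigma)\le C_\sigma^0$. For $3\le n\le8$ one has $\lceil\tfrac{n-2}{3}\rceil\le2$, so the supremum defining $M_1(\sigma)$ runs only over $k\in\{0,1\}$: the $k=0$ term equals $\tfrac{\sigma(B(1,1))}{\sigma(1)}\le C_\sigma^0$ by the very definition of $C_\sigma^0$, and the $k=1$ term is handled by the computation below. Then $C_\sigma=C_\sigma^0=C_{L_n}^0$, and $C_{L_n}^0\le C_{L_n}\le C_\sigma$ forces equality.

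\emph{Proof that $C_{L_n}^0=C_{L_n}$ implies $n\le8$.} Assume $C_{L_n}^0=C_{L_n}$ and let $\mu$ be any doubling minimizer for $L_n$ (one exists by Proposition~\ref{p:DMconvex}). Then $C_\mu^0\le C_\mu=C_{L_n}=C_{L_n}^0\le C_\mu^0$, so $\mu$ is also a minimizer for $C^0$. Now $\sigma$ is itself a $C^0$-minimizer (by Lemma~\ref{l:regularpolygon} and Theorem~\ref{thm:C^0_Ln}, $C_\sigma^0=C_{L_n}^0$), so by the uniqueness part of Theorem~\ref{thm:doubling spectra} the measure $\mu$ is a positive multiple of $\sigma$, whence $C_\sigma=C_\mu=C_{L_n}=C_{L_n}^0=C_\sigma^0$. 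On the other hand, the $k=1$ term at the endpoint $x=1$ in \eqref{eq:C_G} gives $C_\sigma\ge\tfrac{\sigma(B(1,3))}{\sigma(B(1,1))}$, so it is enough to show that $\tfrac{\sigma(B(1,3))}{\sigma(B(1,1))}>C_\sigma^0$ for every $n\ge9$, which contradicts $C_\sigma=C_\sigma^0$.

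\emph{The key computation.} For $n\ge4$ one has $B(1,3)=\{1,2,3,4\}$ and $B(1,1)=\{1,2\}$, so by the elementary partial-sum formula $\sum_{j=1}^m\sin(2j\phi)=\tfrac{\sin(m\phi)\sin((m+1)\phi)}{\sin\phi}$,
\[
\frac{\sigma(B(1,3))}{\sigma(B(1,1))}=\frac{\sin(4\phi)\sin(5\phi)}{\sin(2\phi)\sin(3\phi)}=\frac{2\cos(2\phi)\sin(5\phi)}{\sin(3\phi)}=1+\frac{\sin(7\phi)}{\sin(3\phi)},
\]
where we used $\sin(4\phi)=2\sin(2\phi)\cos(2\phi)$ and $2\cos(2\phi)\sin(5\phi)=\sin(7\phi)+\sin(3\phi)$. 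Since $C_\sigma^0=1+2\cos(2\phi)$ and $2\cos(2\phi)\sin(3\phi)=\sin(5\phi)+\sin\phi$, the inequality $\tfrac{\sigma(B(1,3))}{\sigma(B(1,1))}\le C_\sigma^0$ is equivalent to $\sin(7\phi)\le\sin(5\phi)+\sin\phi$, that is, using $\sin(7\phi)-\sin(5\phi)=2\cos(6\phi)\sin\phi$ and $\sin\phi>0$, to $2\cos(6\phi)\le1$. As $6\phi=\tfrac{3\pi}{n+1}\in(0,\pi]$ for $n\ge2$ and cosine is decreasing there, this reads $\tfrac{3\pi}{n+1}\ge\tfrac{\pi}{3}$, i.e. $n\le8$, with equality exactly at $n=8$ and the strict reverse inequality for all $n\ge9$. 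Substituting this back into the two parts above finishes the proof. The main obstacle is simply keeping the chain of trigonometric identities straight; the rest is routine tracking of the admissible ranges of $k$ in $M_1(\sigma)$ and $M_2(\sigma)$.
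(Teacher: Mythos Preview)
Your proof is correct and, for the implication $C_{L_n}^0=C_{L_n}\Rightarrow n\le8$, follows exactly the paper's route: Perron uniqueness (Theorem~\ref{thm:doubling spectra}) forces any doubling minimizer to be a multiple of $\sigma$, and then the $k=1$ quotient $\sigma(B(1,3))/\sigma(B(1,1))$ is compared with $C_\sigma^0$, both arguments reducing to the threshold $2\cos\!\big(\tfrac{3\pi}{n+1}\big)\lessgtr1$.

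For the converse ($2\le n\le8\Rightarrow C_{L_n}^0=C_{L_n}$) your argument actually improves on the paper, which dismisses this direction as a ``straightforward calculation'' via an unspecified ``easy optimization argument''. You instead invoke Theorem~\ref{t:M1M2M3} to compute $C_\sigma=\max\{M_1(\sigma),M_2(\sigma),C_\sigma^0\}$ directly, bound $M_2(\sigma)\le2$ by Lemma~\ref{l:senoycontar}, and observe that the range $0\le k<\lceil\tfrac{n-2}{3}\rceil\le2$ leaves only $k\in\{0,1\}$ to check in $M_1(\sigma)$---with the $k=1$ term handled by the \emph{same} trigonometric identity used for the other implication. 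This gives a unified proof in which a single computation governs both directions, and removes any need for ad~hoc optimization. One minor imprecision: for $n\in\{3,4,5\}$ the range in $M_1$ is actually just $k=0$, but this only strengthens your bound.
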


\begin{proof}
The fact that, for $2\le n\le8$ we have that $C_{L_n}^0=C_{L_n}$ is a straightforward calculation and can be computed using an easy optimization argument. Let us now see what happens if $n\ge9$. Let $\sigma$ be as in Lemma~\ref{l:senoycontar}.  We claim that if  $C_{L_n}^0=C_{L_n}$, then 
\begin{equation}\label{igualdad}
C_\sigma=C_{L_n}^0.
\end{equation}
Indeed, if  $\mu$ is a minimizer for $C_{L_n}$ (see Proposition~\ref{p:DMconvex}), then 
$$
C^0_\mu\le C_\mu=C_{L_n}=C_{L_n}^0\le C_\mu^0,
$$
and, using the uniqueness of the Perron eigenvector in Theorem~\ref{thm:doubling spectra}, we obtain that, for a positive constant $c>0$, $\mu=c\sigma$. Therefore, $C_\sigma=C_\mu=C_{L_n}=C_{L_n}^0$.

\medskip

We now estimate $M_1(\sigma)$ from below, using the ball of radius $k=1$:

\begin{align*}
M_1(\sigma)&\ge\frac{\sin\frac{\pi}{n+1}+\sin\frac{2\pi}{n+1}+\sin\frac{3\pi}{n+1}+\sin\frac{4\pi}{n+1}}{\sin\frac{\pi}{n+1}+\sin\frac{2\pi}{n+1}}\\[.3cm]
&=\frac{\sin\frac{5\pi}{2(n+1)}\sin\frac{2\pi}{n+1}}{\sin\frac{\pi}{2(n+1)}\sin\frac{\pi}{n+1}(1+2\cos\frac{\pi}{n+1})}\\[.3cm]
&=\frac{2\sin\frac{5\pi}{2(n+1)}\cos\frac{\pi}{n+1}}{\sin\frac{\pi}{2(n+1)} (1+2\cos\frac{\pi}{n+1})}.
\end{align*}
Thus, in order to prove that $M_1(\sigma)>1+2\cos(\pi/(n+1))$, it suffices  to study when the function
$$
f(x)=\frac{\sin5x\cos2x}{\sin x(1+2\cos2x)^2}-\frac12
$$
satisfies that $f(x)>0$. But, it is easy to see that 
$$
f(x)=\frac{2 \cos6 x-1 }{2 (1 + 2 \cos2 x)^2}>0\iff x<\frac{\pi}{18}.
$$
Hence, with $x=\frac{\pi}{2(n+1)}$, we conclude that 
$$
M_1(\sigma)>1+2\cos\Big(\frac{\pi}{n+1}\Big)\iff n\ge9.
$$
Therefore, if $n\ge9$ we get that $C_\sigma\ge M_1(\sigma)>C_{L_n}^0$, which, by \eqref{igualdad}, implies that $C_{L_n}^0<C_{L_n}$.
\end{proof}

\begin{prop}\label{p:minimizer}
If $\mu$ is a symmetric measure on $L_n$ such that $C_\mu=C_{L_n}$, then 
$$M_1(\mu)=C_\mu^0=C_{L_n}.$$
\end{prop}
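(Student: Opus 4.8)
The plan is to invoke Proposition~\ref{t:minimizer}, which already gives $C_\mu=\max\{M_1(\mu),C_\mu^0\}$, so that it suffices to rule out each of the strict inequalities $C_\mu^0<C_\mu$ and $M_1(\mu)<C_\mu$. In both cases I would construct a symmetric measure $\nu$ with $C_\nu<C_\mu=C_{L_n}$, contradicting the definition of $C_{L_n}$; the two constructions are genuinely different, the first being an explicit local perturbation and the second a small move of $\mu$ towards the sine measure $\sigma$.

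\emph{Step 1: $C_\mu^0=C_\mu$.} For $2\le n\le8$ this is immediate from Lemma~\ref{prevlemm}, since then $C_\mu^0\ge C_{L_n}^0=C_{L_n}=C_\mu$. For $n\ge9$, suppose $C_\mu^0<C_\mu$. Let $\nu$ be obtained from $\mu$ by subtracting a small $\varepsilon>0$ from the weight of the central vertex $\lceil\frac n2\rceil$ (and also of vertex $\lceil\frac n2\rceil+1$ when $n$ is even, to keep $\nu$ symmetric) and adding a small $\theta>0$ to the weights of the two endpoints $1$ and $n$. Using the bookkeeping recorded just after the definition of $M_2$, together with $2\lceil\frac{n-2}{3}\rceil<n$ and $\lceil\frac{n-2}{3}\rceil\le\lceil\frac n2\rceil-1$ for $n\ge9$, one sees that the central vertex lies in every numerator ball of $M_2(\nu)$, in no denominator ball of $M_2(\nu)$, and in no denominator ball of $M_1(\nu)$, whereas the endpoints lie in no ball appearing in $M_2(\nu)$ and in every $M_1$-quotient vertex $1$ occurs in both numerator and denominator. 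Hence each $M_1(\nu)$-quotient has the form $\frac{N+\theta-\varepsilon c}{D+\theta}\le\frac{N+\theta}{D+\theta}<\frac ND\le M_1(\mu)\le C_\mu$; each $M_2(\nu)$-quotient has the form $\frac{N'-\varepsilon c'}{D'}<\frac{N'}{D'}\le M_2(\mu)\le C_\mu$; and, for $\varepsilon,\theta$ small, every ratio $\nu(B(j,1))/\nu(j)$ stays below $C_\mu$ (it is unchanged except at the endpoints, the central vertex and their neighbours, where it either decreases or increases continuously from a value $\le C_\mu^0<C_\mu$). Since $C_\nu^0<C_\mu\le3$, Theorem~\ref{t:M1M2M3} gives $C_\nu=\max\{M_1(\nu),M_2(\nu),C_\nu^0\}<C_\mu$, a contradiction.

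\emph{Step 2: $M_1(\mu)=C_\mu$.} Suppose $M_1(\mu)<C_\mu$. The $k=0$ term of $M_1(\mu)$ is exactly $\mu(B(1,1))/\mu(1)$, so this ratio is $\le M_1(\mu)<C_\mu=C_\mu^0$, and therefore the supremum defining $C_\mu^0$ is not attained at vertex $1$; since $\sigma(j)=\sin(j\pi/(n+1))$ has $\sigma(B(x,1))/\sigma(x)$ constant in $x$ (Lemma~\ref{l:regularpolygon}), $\mu$ is not a scalar multiple of $\sigma$, whence $C_\mu^0>C_\sigma^0=C_{L_n}^0$ by Theorem~\ref{thm:doubling spectra}. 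For $t\in(0,1)$ put $\nu_t=(1-t)\mu+t\sigma$, a symmetric measure. For each vertex $j$, Lemma~\ref{l:holder} bounds $\nu_t(B(j,1))/\nu_t(j)$ by $\max\{\mu(B(j,1))/\mu(j),C_\sigma^0\}$, and its equality case, together with $C_\sigma^0<C_\mu^0=C_\mu$, forces strict inequality whenever $\mu(B(j,1))/\mu(j)=C_\mu$; hence $C_{\nu_t}^0<C_\mu$. Likewise, $M_2(\sigma)\le2<C_\mu$ (Lemma~\ref{l:senoycontar}, using $C_\mu\ge C_{L_n}^0>2$), so each $M_2(\nu_t)$-quotient lies strictly below the maximum of one number $\le C_\mu$ and one $\le2$, giving $M_2(\nu_t)<C_\mu$. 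Finally $M_1$ is a maximum of finitely many continuous functions of the measure, so $M_1(\nu_t)\to M_1(\mu)<C_\mu$ as $t\to0^+$; fixing $t$ small with $M_1(\nu_t)<C_\mu$ and applying Theorem~\ref{t:M1M2M3} (valid since $C_{\nu_t}^0<C_\mu\le3$) yields $C_{\nu_t}<C_\mu=C_{L_n}$, a contradiction. Combining the two steps, $M_1(\mu)=C_\mu^0=C_\mu=C_{L_n}$.

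\emph{Main obstacle.} The delicate part is Step~1: verifying, for $n\ge9$, that the central vertex and the two endpoints meet the balls occurring in $M_1(\nu)$, $M_2(\nu)$ and $C_\nu^0$ in exactly the described pattern (this reduces to the elementary inequalities above and to the structural observation that $M_2$-numerator balls contain the central vertex while $M_2$-denominator balls do not), and then the fussy but routine check that the finitely many ratios $\nu(B(j,1))/\nu(j)$ remain below $C_\mu$ for $\varepsilon,\theta$ sufficiently small. The small cases $n\le8$ are settled at once by Lemma~\ref{prevlemm}, and in Step~2 the essential input is that $\sigma$ is the unique $C^0$-minimizer, which is what makes an infinitesimal move of $\mu$ towards $\sigma$ strictly decrease $C^0$ without pushing $M_1$ or $M_2$ up to $C_\mu$.
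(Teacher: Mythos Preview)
Your proof is correct, and the overall architecture (rule out each strict inequality by producing a perturbation $\nu$ with $C_\nu<C_{L_n}$) matches the paper's. Step~2 is essentially the paper's argument for the case $M_1(\mu)<C_{L_n}$: perturb towards $\sigma$ and use Lemma~\ref{l:holder} together with $M_2(\sigma)\le2$. Your justification that $C_\sigma^0<C_\mu^0$ via the uniqueness of the Perron eigenvector (Theorem~\ref{thm:doubling spectra}) is in fact a bit cleaner than the paper's, which simply invokes $C_\sigma^0<C_{L_n}$ for $n\ge9$ and does not spell out the small cases.

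The genuine divergence is in Step~1. The paper handles the case $C_\mu^0<C_{L_n}$ by perturbing towards the counting measure $\lambda$: for small $\varepsilon$, continuity keeps $C_{\mu+\varepsilon\lambda}^0<C_{L_n}$, while Lemma~\ref{l:holder} together with the precomputed values $M_1(\lambda)=2$ and $M_2(\lambda)=7/3$ (Lemma~\ref{l:senoycontar}) forces $M_1(\mu+\varepsilon\lambda)<C_{L_n}$ and $M_2(\mu+\varepsilon\lambda)<C_{L_n}$. This is considerably shorter than your explicit two-parameter perturbation (subtract $\varepsilon$ at the centre, add $\theta$ at the endpoints), which requires verifying by hand which of the perturbed vertices meet the numerator and denominator balls of $M_1$, $M_2$ and $C^0$. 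Your route works, and has the mild advantage of making transparent \emph{why} each quantity moves in the right direction; the paper's route trades that insight for brevity by leaning on the global comparison measures $\sigma$ and $\lambda$ already analysed in Lemma~\ref{l:senoycontar}.
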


\begin{proof}
If $C_\mu=C_{L_n}$, by Proposition \ref{t:minimizer} we know $C_\mu=\max\{C_\mu^0,M_1(\mu)\}$. In order to prove the statement we will proceed by contradiction. Suppose first $M_1(\mu)<C_{L_n}$. By continuity of the function $M_1(\cdot)$ at $\mu$, there is $\varepsilon>0$ such that for any measure $\nu$ on $L_n$ with $\max_{1\leq j\leq n}\{\nu(j)\}\leq 1$, we have that $M_1(\mu+\varepsilon\nu)< C_{L_n}$. Let $\sigma(j)=\sin(\frac{j\pi}{n+1})$. Also, we have that
$$
C_{\mu+\varepsilon\sigma}^0\leq \max\{C_\mu^0,C_\sigma^0\}\leq C_{L_n},
$$
but since $C_\sigma^0=1+2\cos(\frac{\pi}{n+1})<C_{L_n}$, for $n\ge9$ (see Lemma~\ref{prevlemm}), we must have $C_{\mu+\varepsilon\sigma}^0<C_{L_n}$. Similarly, we have
$$
M_2(\mu+\varepsilon\sigma)\leq\max\{M_2(\mu),M_2(\sigma)\},
$$
but since $M_2(\sigma)\leq 2< C_{L_n}$ (by Lemma \ref{l:senoycontar}) we must have $M_2(\mu+\varepsilon\sigma)<C_{L_n}$. Thus, by Theorem \ref{t:M1M2M3} we would have that $C_{\mu+\varepsilon\sigma}<C_{L_n}$. This is a contradiction.

Now, suppose that $C_\mu^0<C_{L_n}$. In that case, by continuity of $C_\mu^0$, there is  $\varepsilon>0$ such that for any measure $\nu$ on $L_n$ with $\max_{1\leq j\leq n}\{\nu(j)\}\leq 1$, we have that $C_{\mu+\varepsilon\nu}^0< C_{L_n}$. Let $\lambda(j)=1$ for every $1\leq j\leq n$. Then, $C_{\mu+\varepsilon\lambda}^0< C_{L_n}$. Also we have
$$
M_1(\mu+\varepsilon\lambda)\leq \max\{M_1(\mu),M_1(\lambda)\}
$$
and 
$$
M_2(\mu+\varepsilon\lambda)\leq \max\{M_2(\mu),M_2(\lambda)\}.
$$
By Lemma \ref{l:senoycontar}, both $M_1(\lambda)< C_{L_n}$ and $M_2(\lambda)<C_{L_n}$. Hence, by Theorem \ref{t:M1M2M3}, we would have $C_{\mu+\varepsilon\lambda}<C_{L_n}$. Again a contradiction.
\end{proof}

\begin{thm}\label{lessthan3}
For every $n\in\mathbb N$, we have that 
$$
C_{L_n}^0=1+2\cos\Big(\frac{\pi}{n+1}\Big)\le C_{L_n}<3,
$$ and, moreover,
$$
C_{L_n}^0=C_{L_n}\iff 2\le n\le8.
$$
\end{thm}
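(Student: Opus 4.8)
The statement bundles together several facts, most of them already in hand: $C^0_{L_n}=1+2\cos(\tfrac{\pi}{n+1})$ is Theorem~\ref{thm:C^0_Ln}, the inequality $C^0_{L_n}\le C_{L_n}$ was recorded in Section~\ref{sec3}, and the equivalence $C^0_{L_n}=C_{L_n}\iff 2\le n\le8$ is precisely Lemma~\ref{prevlemm}. (For $n=1$ the graph is a single point and $C_{L_1}=C^0_{L_1}=1=1+2\cos\tfrac{\pi}{2}$; we assume $n\ge2$ from now on.) Hence the only genuinely new point is the strict bound $C_{L_n}<3$.

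For $2\le n\le8$ this is immediate, since Lemma~\ref{prevlemm} and Theorem~\ref{thm:C^0_Ln} give $C_{L_n}=C^0_{L_n}=1+2\cos(\tfrac{\pi}{n+1})<3$. So assume $n\ge9$, so that the constants $M_1,M_2$ of Section~\ref{secc5} are defined and Theorem~\ref{t:M1M2M3} is available. The plan is to produce one explicit measure of doubling constant $<3$. Neither of the obvious candidates does the job alone: the sine measure $\sigma(j)=\sin(\tfrac{j\pi}{n+1})$ has $C^0_\sigma=1+2\cos(\tfrac{\pi}{n+1})<3$ but its boundary behaviour forces $M_1(\sigma)>C^0_{L_n}$ for $n\ge9$ (Lemmas~\ref{l:senoycontar} and~\ref{prevlemm}), while the counting measure $\lambda$ has $M_1(\lambda)=2$ but $C^0_\lambda=3$. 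The idea is to use the hybrid $\mu_\varepsilon:=\lambda+\varepsilon\sigma$ with $\varepsilon>0$, which is symmetric.

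One then checks the three quantities appearing in Theorem~\ref{t:M1M2M3}. For $C^0_{\mu_\varepsilon}$ the blunt estimate $C^0_{\mu_\varepsilon}\le\max\{C^0_\lambda,C^0_\sigma\}=3$ does not suffice, so one works vertex by vertex: at an interior vertex $j$ the ratio equals $\dfrac{3+\varepsilon(1+2\cos\frac{\pi}{n+1})\sigma(j)}{1+\varepsilon\sigma(j)}<3$ (because $1+2\cos\frac{\pi}{n+1}<3$), and at an endpoint it equals $\dfrac{2+\varepsilon(\sigma(1)+\sigma(2))}{1+\varepsilon\sigma(1)}<3$ (because $\sigma(2)<2\sigma(1)$); as $V_{L_n}$ is finite, $C^0_{\mu_\varepsilon}<3$ for every $\varepsilon>0$. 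For $M_2$, Lemma~\ref{l:holder}, the identity $M_2(\varepsilon\sigma)=M_2(\sigma)$, and Lemma~\ref{l:senoycontar} give $M_2(\mu_\varepsilon)\le\max\{M_2(\lambda),M_2(\sigma)\}=\max\{\tfrac73,M_2(\sigma)\}\le\tfrac73<3$. For $M_1$, note that $M_1(\mu_\varepsilon)$ is a maximum over finitely many radii $k$ of the ratios $\dfrac{\lambda(B(1,2k+1))+\varepsilon\sigma(B(1,2k+1))}{\lambda(B(1,k))+\varepsilon\sigma(B(1,k))}$, and each such ratio converges, as $\varepsilon\to0^+$, to the corresponding ratio for $\lambda$; hence $M_1(\mu_\varepsilon)\to M_1(\lambda)=2$, so $M_1(\mu_\varepsilon)<3$ for all small $\varepsilon>0$. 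Fixing such an $\varepsilon$ and applying Theorem~\ref{t:M1M2M3} to the symmetric measure $\mu_\varepsilon$ (legitimate since $C^0_{\mu_\varepsilon}<3$) gives $C_{\mu_\varepsilon}=\max\{M_1(\mu_\varepsilon),M_2(\mu_\varepsilon),C^0_{\mu_\varepsilon}\}<3$, whence $C_{L_n}\le C_{\mu_\varepsilon}<3$.

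The hard part, as I see it, is the construction of a measure that strictly beats the constant $3$: one must marry the strictly concave sine profile, which subdues the ``interior'' obstructions $C^0$ and $M_2$, with the flat counting profile, which subdues the ``boundary'' obstruction responsible for $M_1(\sigma)$ being large, and the verification becomes feasible only because Theorem~\ref{t:M1M2M3} collapses $C_{\mu_\varepsilon}$ to the three computable numbers $M_1,M_2,C^0$. The single technical subtlety along the way is that $C^0_{\mu_\varepsilon}<3$ has to be established by the direct vertex-wise comparison, not by the crude bound $\max\{C^0_\lambda,C^0_\sigma\}$, which only gives $\le3$.
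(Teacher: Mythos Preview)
Your argument is correct. The identification of the already-proved pieces is accurate, and for the strict inequality $C_{L_n}<3$ your construction $\mu_\varepsilon=\lambda+\varepsilon\sigma$ works: the vertex-by-vertex check of $C^0_{\mu_\varepsilon}<3$ is right (the key identities $\sigma(j-1)+\sigma(j+1)=2\cos(\tfrac{\pi}{n+1})\sigma(j)$ and $\sigma(2)=2\cos(\tfrac{\pi}{n+1})\sigma(1)<2\sigma(1)$ do the job), the bound $M_2(\mu_\varepsilon)\le\max\{M_2(\lambda),M_2(\sigma)\}\le 7/3$ follows from Lemma~\ref{l:holder} and Lemma~\ref{l:senoycontar}, and the continuity argument giving $M_1(\mu_\varepsilon)\to M_1(\lambda)=2$ is valid since only finitely many radii are involved. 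Theorem~\ref{t:M1M2M3} then finishes it.

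The paper proceeds differently for the strict bound. Rather than exhibiting a measure with $C_\mu<3$, it argues by contradiction: if $C_{L_n}=3$ then the counting measure $\lambda$ is a symmetric minimizer, and Proposition~\ref{p:minimizer} forces $M_1(\lambda)=C_{L_n}=3$, contradicting $M_1(\lambda)=2$. Thus the paper routes the argument through the structural result Proposition~\ref{p:minimizer} (every symmetric minimizer satisfies $M_1(\mu)=C^0_\mu=C_{L_n}$), whose own proof uses perturbations of a minimizer by $\varepsilon\sigma$ and $\varepsilon\lambda$. Your approach short-circuits this: you apply the same perturbation idea directly to $\lambda$, obtaining a constructive witness rather than a contradiction. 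The gain is that you bypass Proposition~\ref{p:minimizer} entirely and do not need Lemma~\ref{prevlemm} for the inequality $C_{L_n}<3$ (you only invoke it to dispose of $n\le 8$, where in fact the direct construction still works). The cost is the explicit vertex-wise verification of $C^0_{\mu_\varepsilon}<3$, which as you note cannot be replaced by the crude bound $\max\{C^0_\lambda,C^0_\sigma\}$. The paper's route, on the other hand, yields Proposition~\ref{p:minimizer} as a byproduct, which is of independent use later (Theorem~\ref{t:mainDM}, Proposition~\ref{p:minimizer2}).
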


\begin{proof}
By Theorem~\ref{thm:C^0_Ln} we know already that $C_{L_n}^0=1+2\cos\big(\frac{\pi}{n+1}\big)$, $C_{L_n}^0\le C_{L_n}$ by definition, and, for the counting measure $\lambda$,  $C_{L_n}\le C_\lambda\le3$. If $C_{L_n} =3$, then $\lambda$ would be a minimizer. By Lemma~\ref{l:senoycontar}, we know that $M_1(\lambda)=2$. But, the previous Proposition~\ref{p:minimizer} tells us that, in this case, $M_1(\lambda)=C_{L_n} =3$, which is a contradiction.

The second claim is precisely Lemma~\ref{prevlemm}.
\end{proof}

\begin{thm}\label{t:mainDM}
For every $n\in\mathbb N$ there is $2\leq j\leq\lceil\frac{n}{2}\rceil$ and $k<\lceil\frac{n-2}{3}\rceil$ such that every doubling minimizer $\mu$ on $L_n$ satisfies
$$
\frac{\mu(B(j,1))}{\mu(B(j,0))}=\frac{\mu(B(1,2k+1))}{\mu(B(1,k))}=C_{\mu}. 
$$
\end{thm}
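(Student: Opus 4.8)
The strategy is to establish the statement first for symmetric doubling minimizers and then transfer it to arbitrary ones. Let $\mu$ be a symmetric minimizer on $L_n$; by Propositions~\ref{t:minimizer} and~\ref{p:minimizer} we have $C_\mu=M_1(\mu)=C^0_\mu=C_{L_n}$. I will work with the two sets
$$
J(\mu)=\Big\{2\le j\le \lceil\tfrac{n}{2}\rceil:\ \frac{\mu(B(j,1))}{\mu(j)}=C_{L_n}\Big\},\qquad
K(\mu)=\Big\{0\le k<\lceil\tfrac{n-2}{3}\rceil:\ \frac{\mu(B(1,2k+1))}{\mu(B(1,k))}=C_{L_n}\Big\}.
$$
Since $M_1(\mu)$ is a maximum over a finite index set (nonempty for $n\ge3$) and equals $C_{L_n}$, it is attained, so $K(\mu)\neq\emptyset$. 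The crux is to show that $J(\mu)\neq\emptyset$, i.e. that $C^0_\mu$ is realized at an \emph{interior} vertex of $L_n$ and not only at the leaves $1$ and $n$. Granting this, I will use the equality case of Lemma~\ref{l:holder} to pick one pair $(j^\ast,k^\ast)$ valid simultaneously for all minimizers, and then transfer the conclusion to non-symmetric minimizers by symmetrization.

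To prove $J(\mu)\neq\emptyset$: for $2\le n\le8$, by Theorem~\ref{thm:doubling spectra} together with Lemma~\ref{prevlemm} every minimizer is a multiple of $\sigma(j)=\sin(j\pi/(n+1))$, which realizes $C^0$ at every vertex, so $J(\mu)\neq\emptyset$ trivially. Assume now $n\ge9$ and argue by contradiction: suppose $\mu(B(j,1))/\mu(j)<C_{L_n}$ for every $2\le j\le\lceil\frac{n}{2}\rceil$, so $C^0_\mu=C_{L_n}$ is realized only at $j\in\{1,n\}$ (note that then the vertices $1,2$ and $\lceil\frac{n}{2}\rceil-1,\lceil\frac{n}{2}\rceil,\lceil\frac{n}{2}\rceil+1$ are all distinct). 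Writing $e_v$ for the unit point mass at $v$ and $c=\lceil\frac{n}{2}\rceil$, consider the symmetric perturbation $\nu=\mu+\delta(e_1+e_n)-\varepsilon\,\eta$ with $\delta,\varepsilon>0$ small, where $\eta=e_c$ if $n$ is odd and $\eta=e_{n/2}+e_{n/2+1}$ if $n$ is even. Raising the weight at the two leaves strictly decreases the leaf quotient $\mu(B(1,1))/\mu(1)$, and — since every pair of balls appearing in $M_1(\mu)$ is centered at $1$, so that $\delta$ is added to both numerator and denominator of a ratio exceeding $1$ — strictly decreases every ratio defining $M_1(\mu)$; lowering the central weight strictly decreases every ratio defining $M_2(\mu)$, whose numerator balls contain $c$ while its denominator balls do not. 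The remaining quotients $\mu(B(j,1))/\mu(j)$ with $2\le j\le c$ were strictly below $C_{L_n}$, hence stay below it for $\delta,\varepsilon$ small; thus $C^0_\nu<C_{L_n}$, $M_1(\nu)<C_{L_n}$ and $M_2(\nu)<C_{L_n}$. Since $C^0_\nu<C_{L_n}<3$ by Theorem~\ref{lessthan3}, Theorem~\ref{t:M1M2M3} applies to the symmetric measure $\nu$ and yields $C_\nu=\max\{M_1(\nu),M_2(\nu),C^0_\nu\}<C_{L_n}$, contradicting the minimality of $C_{L_n}$. I expect this perturbation analysis to be the main obstacle: one must carefully track which of the finitely many quotients move up and which move down under this three-part perturbation, and verify that $\nu$ stays a positive symmetric measure.

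For symmetric minimizers $\mu_1,\mu_2$ the sum $\mu_1+\mu_2$ is again a symmetric minimizer (Proposition~\ref{p:DMconvex}), and the equality clause of Lemma~\ref{l:holder} gives $J(\mu_1+\mu_2)=J(\mu_1)\cap J(\mu_2)$ and $K(\mu_1+\mu_2)=K(\mu_1)\cap K(\mu_2)$, since a quotient of sums equals the common bound $C_{L_n}$ exactly when each summand quotient does. As $J(\mu)$ and $K(\mu)$ are nonempty subsets of the fixed finite sets $\{2,\dots,\lceil\frac{n}{2}\rceil\}$ and $\{0,\dots,\lceil\frac{n-2}{3}\rceil-1\}$ and every finite intersection $J(\mu_1)\cap\cdots\cap J(\mu_r)=J(\mu_1+\cdots+\mu_r)$ (and likewise for $K$) is nonempty, the intersections over all symmetric minimizers are nonempty; fix $j^\ast$ in the first and $k^\ast$ in the second. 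Finally, let $\mu$ be an arbitrary doubling minimizer on $L_n$. Since the reflection $j\mapsto n+1-j$ is an automorphism of $L_n$, the measure $\mu^{\mathrm{rev}}(j)=\mu(n+1-j)$ satisfies $C_{\mu^{\mathrm{rev}}}=C_\mu=C_{L_n}$, so $\tilde\mu=\mu+\mu^{\mathrm{rev}}$ is a symmetric minimizer; hence $\tilde\mu(B(j^\ast,1))/\tilde\mu(j^\ast)=C_{L_n}$ and $\tilde\mu(B(1,2k^\ast+1))/\tilde\mu(B(1,k^\ast))=C_{L_n}$. Writing each of these as a ratio of the corresponding sums for $\mu$ and $\mu^{\mathrm{rev}}$ (using $\mu^{\mathrm{rev}}(B(1,r))=\mu(B(n,r))$ and $\mu^{\mathrm{rev}}(B(j^\ast,1))=\mu(B(n+1-j^\ast,1))$, both bounded by $C_\mu$), and invoking the equality clause of Lemma~\ref{l:holder} once more, I obtain
$$
\frac{\mu(B(j^\ast,1))}{\mu(B(j^\ast,0))}=\frac{\mu(B(1,2k^\ast+1))}{\mu(B(1,k^\ast))}=C_\mu,
$$
which is the assertion with $j=j^\ast$ and $k=k^\ast$.
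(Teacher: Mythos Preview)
Your argument is correct. The paper's proof is much shorter: it supposes no universal $j$ exists, picks for each $j\in\{2,\dots,\lceil n/2\rceil\}$ a minimizer $\mu_j$ with $\mu_j(B(j,1))/\mu_j(j)<C_{L_n}$, sums them, symmetrizes, and invokes Proposition~\ref{p:minimizer} (and similarly for $k$). Your intersection identity $J(\mu_1+\mu_2)=J(\mu_1)\cap J(\mu_2)$ together with the finite-intersection-property step is essentially the contrapositive of this summing trick, so those parts match in spirit. Where you genuinely go further is in proving $J(\mu)\neq\emptyset$ for each symmetric minimizer via the perturbation $\nu=\mu+\delta(e_1+e_n)-\varepsilon\eta$: this explicitly rules out the possibility that $C^0_\mu=C_{L_n}$ is realized only at the leaves $j\in\{1,n\}$, a point the paper's short argument glosses over (its assertion ``$C^0_\mu<C_{L_n}$'' for the summed measure is not obviously justified at $j=1$). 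Your perturbation does work as written; in fact the term $-\varepsilon\eta$ is unnecessary, since $\nu=\mu+\delta(e_1+e_n)$ alone is already a symmetric measure with $C^0_\nu<C_{L_n}<3$ and $M_2(\nu)=M_2(\mu)\le C_{L_n}$, hence by Theorem~\ref{t:M1M2M3} a symmetric minimizer, yet with $M_1(\nu)<C_{L_n}$, which already contradicts Proposition~\ref{p:minimizer}. Your final transfer from symmetric to arbitrary minimizers via the equality clause of Lemma~\ref{l:holder} is also a detail the paper leaves implicit.
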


\begin{proof}
Suppose otherwise, for every $2\leq j\leq\lceil\frac{n}{2}\rceil$ there is $\mu_j$ on $L_n$ such that $C_{\mu_j}=C_{L_n}$ and
$$
\frac{\mu_j(B(j,1))}{\mu_j(B(j,0))}<C_{L_n}
$$
Let $\mu=\sum_{j=2}^{\lceil\frac{n}{2}\rceil} \mu_j$. It is clear from Proposition \ref{p:DMconvex} that $C_\mu=C_{L_n}$. However, by Lemma \ref{l:holder}, it follows that  $C_\mu^0<C_{L_n}$. Taking the symmetrization of $\mu$ this leads to a contradiction with Proposition \ref{p:minimizer}.

Similarly, if for every $k<\lceil\frac{n-2}{3}\rceil$ there is $\mu_k$ such that $C_{\mu_k}=C_{L_n}$ and
$$
\frac{\mu_k(B(1,2k+1))}{\mu_k(B(1,k))}<C_{L_n},
$$
then $\mu=\sum_k\mu_k$ satisfies $C_\mu=C_{L_n}$ but $M_1(\mu)<C_{L_n}$. This is a contradiction with Proposition \ref{p:minimizer}.
\end{proof}

 However, starting with an arbitrary doubling minimizer we next show  how to modify it to construct another one with somehow better properties:

\begin{prop}\label{p:minimizer2}
For every $n\in\mathbb N$ there is a symmetric measure $\mu$ on $L_n$ such that 
$$
C_{L_n}=C_\mu=\frac{\mu(B(\lceil\frac{n}{2}\rceil,1))}{\mu(B(\lceil\frac{n}{2}\rceil,0))}=\frac{\mu(B(1,2k+1))}{\mu(B(1,k))}=\frac{\mu(B(2,1))}{\mu(B(2,0))},
$$ 
where $k$ is given as in Theorem~\ref{t:mainDM}.
\end{prop}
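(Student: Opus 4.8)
The plan is to reduce to a symmetric doubling minimizer via Lemma~\ref{l:symmetric} and then exploit what is already known about such measures. By Proposition~\ref{p:minimizer}, a symmetric $\mu$ with $C_\mu=C_{L_n}$ satisfies $C_\mu=C_\mu^0=M_1(\mu)=C_{L_n}$; in particular $M_2(\mu)\le C_{L_n}$ and $C_\mu^0<3$, so Theorem~\ref{t:M1M2M3} is available, and Theorem~\ref{t:mainDM} already gives $\frac{\mu(B(1,2k+1))}{\mu(B(1,k))}=C_{L_n}$ for the fixed radius $k$. Hence it only remains to produce a symmetric minimizer whose local ratio equals $C_{L_n}$ both at $2$ and at $\lceil\frac{n}{2}\rceil$. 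I would first prove that the equality at $2$ holds automatically for every symmetric minimizer, and then build, by a single-vertex perturbation at the centre, one for which the equality at $\lceil\frac{n}{2}\rceil$ also holds. Throughout I assume $n$ large enough that $\lceil\frac{n}{2}\rceil\ge3$ and the ranges defining $M_1,M_2$ are non-degenerate; for $n\le 8$ the unique minimizer is $\sigma$ (Theorem~\ref{thm:doubling spectra}), whose local ratio equals $C_{L_n}$ at every vertex, so the statement is immediate there.

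For the equality at $2$, let $\mu$ be a symmetric minimizer and suppose $\frac{\mu(B(2,1))}{\mu(2)}<C_{L_n}$. For small $\varepsilon>0$ let $\mu^{(\varepsilon)}$ be the (still symmetric) measure obtained by adding $\varepsilon$ to the weights of the two endpoints $1$ and $n$. Since vertex $1$, respectively $n$, lies in both the numerator and the denominator ball of every quotient defining $M_1$, and these quotients are all $>1$, adding $\varepsilon$ strictly decreases each of them, so $M_1(\mu^{(\varepsilon)})<M_1(\mu)=C_{L_n}$. On the other hand $1$ and $n$ lie outside every ball occurring in $M_2$ — numerator balls of $M_2$ start at a vertex $\ge2$ since $k<\frac{j-2}{2}$ forces $j-2k-1\ge2$, and end at a vertex $\le n-2$ since $k<\lceil\frac{n}{2}\rceil-j$ — and outside $B(\lceil\frac{n}{2}\rceil,1)$, so $M_2(\mu^{(\varepsilon)})=M_2(\mu)\le C_{L_n}$, the local ratios at all vertices other than $1,2$ and their mirrors are unchanged, the one at $1$ decreases, and for $\varepsilon$ small the one at $2$ is still $<C_{L_n}$. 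Hence $C^0_{\mu^{(\varepsilon)}}\le C_{L_n}<3$ and Theorem~\ref{t:M1M2M3} gives $C_{\mu^{(\varepsilon)}}=\max\{M_1,M_2,C^0\}\le C_{L_n}$; thus $\mu^{(\varepsilon)}$ is a symmetric minimizer, and Proposition~\ref{p:minimizer} forces $M_1(\mu^{(\varepsilon)})=C_{L_n}$, contradicting the strict inequality above. Therefore every symmetric minimizer satisfies $\frac{\mu(B(2,1))}{\mu(2)}=C_{L_n}$.

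For the equality at the centre, fix any symmetric minimizer $\mu_0$ and, for $\varepsilon>0$, let $\mu_0^{(\varepsilon)}$ be obtained by subtracting $\varepsilon$ from the weight of $\lceil\frac{n}{2}\rceil$ (and also of $n+1-\lceil\frac{n}{2}\rceil$ when $n$ is even). No denominator ball of $M_1$ contains $\lceil\frac{n}{2}\rceil$, so $M_1(\mu_0^{(\varepsilon)})\le M_1(\mu_0)$; every numerator ball of $M_2$ contains the central vertex (both central vertices when $n$ is even) while no denominator one does, so $M_2(\mu_0^{(\varepsilon)})\le M_2(\mu_0)$; and among the $C^0$-ratios only the one at $\lceil\frac{n}{2}\rceil$ can grow, that at $\lceil\frac{n}{2}\rceil-1$ decreasing and all others staying put. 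By the monotonicity $a_1<\dots<a_{\lceil n/2\rceil}$ of Lemma~\ref{l:C<3}, the ratio at $\lceil\frac{n}{2}\rceil$ increases continuously with $\varepsilon$ and reaches $3>C_{L_n}$ precisely when $a_{\lceil n/2\rceil}$ drops to $a_{\lceil n/2\rceil-1}$; so there is a first $\varepsilon_0\ge0$ at which it equals $C_{L_n}$, with the monotonicity — hence $C^0<3$ — still in force. Theorem~\ref{t:M1M2M3} then yields $C_{\mu_0^{(\varepsilon_0)}}\le C_{L_n}$, so $\mu:=\mu_0^{(\varepsilon_0)}$ is a symmetric minimizer with $\frac{\mu(B(\lceil n/2\rceil,1))}{\mu(\lceil n/2\rceil)}=C_{L_n}$.

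Putting these together, the measure $\mu$ is a symmetric minimizer, hence by the previous paragraph $\frac{\mu(B(2,1))}{\mu(2)}=C_{L_n}$ and by Theorem~\ref{t:mainDM} $\frac{\mu(B(1,2k+1))}{\mu(B(1,k))}=C_{L_n}$ for the stated $k$, which is the claim. I expect the main difficulty to be the monotonicity bookkeeping concealed in the two perturbation arguments — checking that none of the ball-quotients listed in Theorem~\ref{t:M1M2M3} is driven above $C_{L_n}$ — the delicate cases being exactly the $M_1$- and $M_2$-quotients whose balls touch the perturbed vertices, which are kept in check by combining the strict monotonicity $a_i<a_j<\frac{j}{i}a_i$ of Lemma~\ref{l:C<3} with the radius restrictions built into the definitions of $M_1$ and $M_2$; one must also carry the parity of $n$ along, the centre consisting of two vertices when $n$ is even.
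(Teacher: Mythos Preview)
Your argument is correct and, while it uses the same toolbox as the paper (Theorem~\ref{t:M1M2M3}, Proposition~\ref{p:minimizer}, Theorem~\ref{t:mainDM}, and single-vertex perturbations), the organization is genuinely different. The paper first subtracts a carefully chosen amount from the central vertex so that the ratio at $\lceil n/2\rceil$ becomes exactly $C_{L_n}$, and \emph{then}, if the ratio at $2$ is still below $C_{L_n}$, adds a specific $\theta$ to vertex $1$ to force that ratio up to $C_{L_n}$; both perturbations are explicit and constructive. You instead prove first, by contradiction via Proposition~\ref{p:minimizer}, the stronger fact that \emph{every} symmetric minimizer already has $\frac{\mu(B(2,1))}{\mu(2)}=C_{L_n}$ --- in effect pinning down $j=2$ in Theorem~\ref{t:mainDM} --- and only afterwards perturb at the centre. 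What your route buys is a sharper structural statement about all symmetric minimizers and a cleaner handling of symmetry (you perturb $1$ and $n$ simultaneously, whereas the paper's $\tilde\nu=\nu_{1,\theta}$ is formally asymmetric, so applying Theorem~\ref{t:M1M2M3} to it tacitly requires the obvious symmetrization). What the paper's route buys is explicitness: the exact values of $\varepsilon$ and $\theta$ are written down, so one sees precisely how the adjusted measure is built. Your bookkeeping on which balls in $M_1$ and $M_2$ contain the perturbed vertices is correct --- the constraint $k<\tfrac{j-2}{2}$ indeed forces $j-2k-1\ge2$, and $k<\lceil\tfrac{n-2}{3}\rceil$ keeps vertex $n$ out of the $M_1$ numerator balls --- and your parity remark on the centre is handled properly.
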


\begin{proof}
Let $\mu$ be a symmetric measure on $L_n$ such that $C_\mu=C_{L_n}$. Suppose that $\frac{\mu(B(\lceil\frac{n}{2}\rceil,1)}{\mu(B(\lceil\frac{n}{2}\rceil,0))}<C_\mu$. Let $d_n=\frac{3+(-1)^n}{2}$ and set
$$
\varepsilon=\frac{C_\mu \mu(\lceil\frac{n}{2}\rceil)-\mu(B(\lceil\frac{n}{2}\rceil,1))}{C_\mu-d_n}.
$$
Let $\nu$ be the measure $\mu_{\lceil\frac{n}{2}\rceil,-\varepsilon}$ given as in \eqref{eq:perturbmu} (note that $\varepsilon<\mu(\lceil\frac{n}{2}\rceil)$).

The same reasoning as in the proof of Proposition \ref{t:minimizer} yields $C_\nu=C_\mu$. Moreover, by our choice of $\varepsilon$ we have
$$
\frac{\nu(B(\lceil\frac{n}{2}\rceil,1))}{\nu(B(\lceil\frac{n}{2}\rceil,0))}=\frac{\mu(B(\lceil\frac{n}{2}\rceil,1))-d_n\varepsilon}{\mu(\lceil\frac{n}{2}\rceil)-\varepsilon}=C_\mu=C_\nu.
$$ 
By Theorem \ref{t:mainDM}, we have
$$
C_\nu=\frac{\nu(B(\lceil\frac{n}{2}\rceil,1))}{\nu(B(\lceil\frac{n}{2}\rceil,0))}=\frac{\nu(B(1,2k+1))}{\nu(B(1,k))}.
$$
Now, suppose $\nu$ does not satisfy the requirements of the claim, that is $\frac{\nu(B(2,1))}{\nu(B(2,0))}<C_{L_n}$. Let 
$$
\theta=C_{L_n}\nu(2)-\nu(B(2,1)),
$$
and set $\tilde\nu=\nu_{1,\theta}$ as in \eqref{eq:perturbmu}. It is clear that
$$
\frac{\tilde \nu(B(2,1))}{\tilde \nu(B(2,0))}=\frac{\tilde \nu(B(\lceil\frac{n}{2}\rceil,1))}{\tilde \nu(B(\lceil\frac{n}{2}\rceil,0))}=C_{L_n}.
$$
It is also clear that $C_{\tilde \nu}^0\leq C_\nu^0$ and $M_1(\tilde \nu)<M_1(\nu)$. Finally, since $1\notin B(j,2k+1)$ for every $j,k$ appearing in the expression for $M_2(\tilde \nu)$, it follows that $M_2(\tilde \nu)=M_2(\nu)$. Hence, $C_{\tilde\nu}=C_\nu=C_{L_n}$, so $\tilde \nu$ satisfies the requirements in the claim.
\end{proof}

\section{Final remarks}\label{finalrem}

In this section we are going to recollect several remarks and open questions that we find of interest for possible future developments. Some of them are a natural consequence of the results we have already proved, and have been numerically checked   on a computer, up to a certain degree of accuracy.
\medskip

\begin{itemize}[leftmargin=*]

\item[--]
It would be interesting to find out the explicit relation between $n$ and $k$ in Theorem~\ref{t:mainDM}, as this would improve the lower bound $1+2\cos(\pi/(n+1))\leq C_{L_n}$. We have observed numerically that for $n\leq 200$ one essentially has $k=\lceil\frac{n-8.6}{5.6}\rceil$.
\medskip

\item[--]  Related to Theorem~\ref{t:mainDM} and Proposition~\ref{p:minimizer2} we have observed that, in all cases we have been able to explicitly calculate $C_{L_n}$, most of them with the help of a computer, the following holds:

\medskip\noindent
For every $n\ge2$, there exists a symmetric doubling minimizer $\mu\in DM(L_n)$ such that,
\medskip

\begin{enumerate}[leftmargin=*]
\item[(i)] There exists $k\in\mathbb N\cup\{0\}$ for which
$$
C_{L_n}=\frac{\mu(B(1,2k+1))}{\mu(B(1,k))}=\frac{\mu(B(n,2k+1))}{\mu(B(n,k))}.
$$

\item[(ii)] For every $j\in\{2,\dots,n-1\}$,
$$
C_{L_n}=\frac{\mu(B(j,1))}{\mu(B(j,0))}.
$$
\end{enumerate}

\medskip
\noindent
If this were the case, knowing $k$ as in the previous remark, it would a priori be possible to implement an algorithm to determine $C_{L_n}$ and calculate $\mu$ solving the following system of $\lceil\frac{n}{2}\rceil$ rational equations and unknowns (with the normalization $\mu(1)=1$ and writing $C=C_{L_n}$):
\medskip

$$
\begin{cases}
  C=\displaystyle\frac{1+\sum_{l=2}^{2k+2}\mu(l)}{1+\sum_{l=2}^{k+1}\mu(l)}   & \\[.5cm]
    C=\displaystyle\frac{1+\mu(2)+\mu(3)}{\mu(2)}   &\\[.5cm]
    C=\displaystyle\frac{\mu(2)+\mu(3)+\mu(4)}{\mu(3)}   & \\[.5cm]
...& \\[.5cm]
     C=\displaystyle\frac{\mu(\lceil\frac{n}{2}\rceil-1)+\mu(\lceil\frac{n}{2}\rceil)+\mu(\lceil\frac{n}{2}\rceil+1)}{\mu(\lceil\frac{n}{2}\rceil)}   .& 
\end{cases}
$$
\medskip

\noindent
In particular, we have been able to solve these equations for $n\le201$ (recall that by Proposition~\ref{lessthan3} we have that $C_{L_n}=1+2\cos\big(\frac{\pi}{n+1}\big)$, if $2\le n\le 8$). These are some examples:
\medskip

\noindent
$C_{L_9}\approx 2.9051661677540188$ is the largest root of the equation
$$
x^4-5 x^3+7 x^2-3 x+1=0.
$$

\medskip

\noindent
$C_{L_{10}}\approx 2.9229996101689726$ is the largest root of the equation
$$
x^4-3 x^3+x-1=0.
$$

\medskip

\noindent
$C_{L_{51}}\approx 2.9969167359239086$ is the largest root of the equation
\begin{align*}
&x^{26}-25 x^{25}+276 x^{24}-1747 x^{23}+6808 x^{22}-15708 x^{21}\\
&\qquad+14861 x^{20}+24091 x^{19}-92682 x^{18}+87057 x^{17}+77858 x^{16}\\
&\qquad
-234588 x^{15}+102327 x^{14}+199171 x^{13}-225057 x^{12}\\
&\qquad
-41798 x^{11}+165000 x^{10}-36531 x^9-58763 x^8+25759 x^7\\
&\qquad+10011 x^6-6268 x^5-646 x^4+597 x^3+6 x^2-12 x=0.
\end{align*}

\medskip

\item[--] Contrary to what happens for $\Z$ (Theorem~\ref{t:minimizersZ}), we do not know whether there is a unique doubling minimizer on $L_n$ (up to multiplicative constants). Observe that, as Example~\ref{examplenonunique} shows, uniqueness does not hold in $\N$.

\medskip

\item[--] Further extensions to more general graphs $G$ will be considered in the forthcoming paper \cite{DST}. In particular, the connection of the constant $C_G^0$ to spectral properties, the symmetries of doubling minimizers with respect to Aut$(G)$, the group of automorphisms of $G$, and the characterization of those graphs for which $C_G\le 3$.

\end{itemize}

\medskip

\noindent
\textbf{Acknowledgment.} The first author would like to thank Professor Miguel \'Angel Sama (UNED),  for his valuable help with the numerical calculations (and some enlightening conversations)  involved in the preliminary versions of this manuscript.


\begin{thebibliography}{99}

\bibitem{BM} J. A. Bondy and U. S. R. Murty, \textit{Graph Theory}, Graduate Texts in Mathematics, \textbf{244}, Springer, New York, 2008.

\bibitem{BH}  A. E. Brouwer and W. H. Haemers, \textit{Spectra of Graphs}, Universitext, Springer, New York, 2012.

\bibitem{CW}
R. R. Coifman and G. Weiss,
\textit{Analyse harmonique non-commutative sur certains espaces homog\`enes. \'Etude de certaines int\'egrales singuli\`eres},  Lecture Notes in Mathematics,  242. Springer-Verlag, Berlin-New York, 1971.

\bibitem{DST} E. Durand-Cartagena, J. Soria, and P. Tradacete, \textit{Doubling constants and spectral theory  on graphs} (preprint).

\bibitem{heinonen}
J. Heinonen, \textit{Lectures on Analysis on Metric Spaces,} Universitext, Springer-Verlag, New York, 2001.

\bibitem{HKST}
J. Heinonen, P. Koskela, N. Shanmugalingam, and J. T. Tyson, \textit{Sobolev Spaces on Metric Measure Spaces. An Approach Based on Upper Gradients}, New Mathematical Monographs, \textbf{27}, Cambridge University Press, Cambridge, 2015.

\bibitem{MaSe} R. A. Mac\'ias and C. Segovia, \textit{Lipschitz functions on spaces of homogeneous type}, 
Adv. in Math. \textbf{33} (1979), no. 3, 257--270. 

\bibitem{MH}  J. C. Mason and D. C. Handscomb, \textit{Chebyshev Polynomials}, Chapman \& Hall/CRC, Boca Raton, FL, 2003.

\bibitem{NaorTao} A. Naor and T. Tao, \textit{Random martingales and localization of maximal inequalities,} J. Funct. Anal. {\bf 259} (2010), no. 3, 731--779.

\bibitem{ST} J. Soria and P. Tradacete, \textit{The least doubling constant of a metric measure space,} Ann. Acad. Sci. Fenn. Math. \textbf{44} (2019), 1015--1030. 

\bibitem{sortra} J. Soria and P. Tradacete, \textit{Geometric properties of infinite graphs and the Hardy-Littlewood maximal operator,} Journal d'Analyse Math\'ematique  \textbf{137} (2019), 913--937. 

\end{thebibliography}
\end{document}